\newcommand{\cyr}{%
\renewcommand\rmdefault{wncyr}%
\renewcommand\sfdefault{wncyss}%
\renewcommand\encodingdefault{OT2}%
\normalfont\selectfont}
\DeclareTextFontCommand{\textcyr}{\cyr}
\newtheoremstyle{break}
  {9pt}
  {9pt}
  {\itshape}
  {}
  {\bfseries}
  {}
  {\newline}
  {}
\newtheoremstyle{beispiel}
  {9pt}
  {9pt}
  {\upshape}
  {}
  {\bfseries}
  {}
  {\newline}
  {}
\theoremstyle{break}
\newtheorem{defi}{Definition}[section]
\newtheorem{prop}[defi]{Proposition}
\newtheorem{lemma}[defi]{Lemma}
\newtheorem{cor}[defi]{Corollary}
\newtheorem{theorem}[defi]{Theorem}
\theoremstyle{beispiel}
\newtheorem{example}[defi]{Example}
\DeclareMathOperator{\bk}{\mathbf{k}}
\DeclareMathOperator{\rmod}{\mathrm{mod-}}
\DeclareMathOperator{\Ext}{\mathrm{Ext}}
\DeclareMathOperator{\rad}{\mathrm{rad}}
\DeclareMathOperator{\pd}{\mathrm{pdim}}
\DeclareMathOperator{\N}{\mathbb{N}}
\DeclareMathOperator{\sQ}{\mathcal{Q}}
\DeclareMathOperator{\Ld}{\Lambda}
\DeclareMathOperator{\ld}{\lambda}
\begin{document}
\title{The strong no loop conjecture is true for mild algebras}

\author{Denis Skorodumov\\ Bergische Universit\"at Wuppertal\\ Germany\thanks{E-mail: skorodumov@math.uni-wuppertal.de}}
\date{November 04, 2010}

\maketitle
\begin{center}\textcyr{Dlya moikh roditele\u i Reginy i Vitaliya}\end{center}\vspace{10pt}

\begin{abstract}
Let $\Lambda$ be a finite dimensional associative algebra over an algebraically closed field with a simple module $S$ of finite projective dimension. The strong no loop conjecture says that this implies $\mathrm{Ext}^1_{\Lambda}(S,S)=0$, i.e. that the quiver of $\Lambda$ has no loops in the point corresponding to $S$. In this paper we prove the conjecture in case $\Lambda$ is mild, which means that $\Lambda$ has only finitely many two-sided ideals and each proper factor algebra $\Lambda/J$ is representation finite. In fact, it is sufficient that a ''small neighborhood'' of the support of the projective cover of $S$ is mild.
\end{abstract}

\section{Introduction}
Let $\Ld$ be a finite dimensional associative algebra over a fixed algebraically closed field $\bk$ of arbitrary characteristic. We consider only $\Ld$-right modules of finite dimension.

The strong no loop conjecture says that a simple $\Ld$-module $S$ of finite projective dimension satisfies $\Ext^1_{\Ld}(S,S)=0$. To prove this conjecture for a given algebra we can switch to the Morita-equivalent basic algebra and therefore assume that $\Ld=\bk\sQ/I$ for some quiver $\sQ$ and some ideal $I$ generated by linear combinations of paths of length at least two. Then $S=S_x$ is the simple corresponding to a point $x$ in $\sQ$ and the conjecture means that there is no loop at $x$ provided the projective dimension $\pd_{\Ld}S_x$ is finite.\\

The conjecture is known for
\begin{itemize}
        \item monomial algebras by Igusa \cite{Igu90},
        \item truncated extensions of semi-simple rings by Marmaridis, Papistas \cite{MP95},
        \item bound quiver algebras $k\sQ/I$ such that for each loop $\alpha\in\sQ$ there exists an $n\in \N$ with $\alpha^n\in I\setminus (IJ+JI)$, where $J$ denotes the ideal generated by the arrows \cite{GSZ01},
        \item special biserial algebras by Liu, Morin \cite{LM04},
        \item two point algebras with radical cube zero by Jensen \cite{Jen05}.
\end{itemize}

In this paper, we prove the conjecture for another class of algebras including all representation-finite algebras. To state our result precisely we introduce for any point $x$ in $\sQ$ its \textbf{neighborhood} $\Ld(x)=e\Ld e$. Here $e$ is the sum of all primitive idempotents $e_z\in\Ld$ such that $z$ belongs to the support of the projective $P_x:=e_x\Ld$ or such that there is an arrow $z\to x$ in $\sQ$ or a configuration $y'\leftarrow x\rightleftarrows y\leftarrow z$ with 4 different points $x,y,y'$ and $z$.\\
Recall that an algebra $\Ld$ is called \textbf{distributive} if it has a distributive lattice of two-sided ideals and \textbf{mild} if it is distributive and any proper quotient $\Ld/J$ is representation-finite.

Our main result reads as follows:
\begin{theorem}\label{maintheorem} Let $\Ld=\bk\sQ/I$ be a finite dimensional algebra over an algebraically closed field $\bk$. Let  $x$ be a point in $\sQ$ such that the corresponding simple $\Ld$-module $S_x$ has finite projective dimension. If $\Ld(x)$ is mild, then there is no loop at $x$.
\end{theorem}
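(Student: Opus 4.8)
The plan is to argue by contradiction: assume there is a loop $\alpha$ at $x$ and $\pd_\Ld S_x<\infty$.  The finiteness of projective dimension survives the restriction to the idempotent subalgebra $\Ld(x)=e\Ld e$ in a controlled way, so one may replace $\Ld$ by $\Ld(x)$ and assume from the outset that the whole algebra is mild.  (One has to check that the relevant Ext-vanishing is unaffected by this reduction — this is the role of the carefully chosen neighborhood, which must be large enough to contain all the syzygy information that a minimal projective resolution of $S_x$ sees near $x$.)  From then on the goal is purely about mild algebras: a mild algebra with a loop $\alpha$ at $x$ cannot have $\pd_\Ld S_x<\infty$.

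\medskip
\noindent\textbf{Key steps.}
\begin{compactenum}
\item \emph{Reduction to the neighborhood.}  Show that $\pd_\Ld S_x<\infty$ implies $\pd_{\Ld(x)}(S_x\text{ restricted})<\infty$, and that a loop at $x$ in $\sQ$ is still a loop at $x$ in the quiver of $\Ld(x)$.  The point of including the configurations $y'\leftarrow x\rightleftarrows y\leftarrow z$ is presumably to guarantee that $e\Ld e$ ``sees'' enough of the second and third syzygies; I would make this precise by tracking which arrows and relations can occur in $\Omega S_x$, $\Omega^2 S_x$.
\item \emph{Structure of mild algebras.}  Invoke the classification-type structural results for distributive/mild algebras (Bongartz, Ringel, et al.): the relations are very restricted, multiplicities of arrows are bounded, and around a loop the local structure at $x$ is one of a short list of possibilities.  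In particular, with a loop $\alpha$ at $x$, the projective $P_x$ has a submodule generated by $\alpha$, and mildness forces $\rad P_x$ (or a summand thereof) to again involve $S_x$ in a way that makes $\Omega S_x$ contain a copy of $S_x$ or of a module whose own syzygy reproduces $S_x$.
\item \emph{Periodicity / non-vanishing argument.}  Using the previous step, produce a contradiction with $\pd_\Ld S_x<\infty$: either exhibit that $S_x$ is $\Omega$-periodic (hence infinite projective dimension unless projective, which it is not since $\alpha\notin I$), or run a dimension/Euler-form argument showing $\sum_i (-1)^i\sdim \Omega^i S_x$ cannot terminate.  For the latter one would use that on a mild algebra the Cartan matrix is invertible over $\mathbb{Q}$ and compute the class of $S_x$ in the Grothendieck group of the derived category, deriving that $\langle S_x,S_x\rangle>0$ forces $\Ext^1(S_x,S_x)=0$ — a Lenzing-type ``no loops'' argument that is valid precisely when finitistic dimension is finite, which mildness supplies.
\end{compactenum}

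\medskip
\noindent\textbf{Main obstacle.}  The hard part will be Step 2–3, i.e. extracting from ``mild'' exactly the local information at the loop that powers the syzygy/Euler-form computation.  ``Mild'' is a global hypothesis (distributive ideal lattice, all proper quotients representation-finite), and turning it into a usable statement about $\rad P_x$, $\soc P_x$, and the first few syzygies of $S_x$ near $x$ requires invoking (and perhaps re-proving in the needed form) the structure theory of distributive algebras — controlling arrow multiplicities at and around $x$, and the shape of the relations through $x$.  A secondary technical nuisance is making the idempotent-reduction of Step 1 watertight: one must verify that no projective-resolution data relevant to $\Ext^1_\Ld(S_x,S_x)$ is lost by passing to $e\Ld e$, which is exactly why the neighborhood is defined by that particular three-term configuration rather than just by the support of $P_x$.
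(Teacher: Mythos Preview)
Your proposal has a genuine gap in Step~1 and takes the wrong route in Step~3.

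\textbf{Step 1 is not what happens and is not generally valid.} You propose to show that $\pd_\Ld S_x<\infty$ implies $\pd_{\Ld(x)}S_x<\infty$ and then work entirely in the mild algebra $\Ld(x)$. But the idempotent functor $-\cdot e:\rmod\Ld\to\rmod(e\Ld e)$ does not preserve projective dimension in general, and the paper never makes this move. What the paper actually uses is much weaker: since $\Ld(x)$ contains the \emph{support of $P_x$}, the submodule lattice of $P_x$ is the same over $\Ld$ and over $\Ld(x)$. Mildness of $\Ld(x)$ is invoked only to control the internal structure of $P_x$ (via ray-categories, contours, cleaving diagrams); \emph{all} projective-dimension statements are proved in $\rmod\Ld$, by writing down short exact sequences among submodules of $P_x$ whose outer terms are filtered by $S_x$ (and occasionally a neighbouring simple $S_y$ whose finite projective dimension is forced along the way). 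The extra vertices in the neighborhood are not there to make an idempotent reduction of homological data go through; they are there so that the cleaving-diagram arguments inside $\overrightarrow{\Ld(x)}$ see enough arrows to produce contradictions to mildness.

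\textbf{Step 3 misses the actual mechanism.} The contradiction is not obtained via $\Omega$-periodicity or an Euler-form/Cartan-matrix computation. The key device is Proposition~\ref{lenzingsresult} (a refinement of Lenzing's trace argument): if $P_x$ admits an $\alpha$-filtration $P_x=M_0\supset M_1\supset\cdots\supset M_n=0$ with $\alpha M_i\subset M_{i+1}$ and each $M_i$ of finite projective dimension in $\rmod\Ld$, then $\alpha$ cannot be a loop. The entire technical content of the paper is the construction of such a filtration. One first uses \cite{LM04,GSZ01} to find a contour $(\alpha^t,\beta_1\cdots\beta_r)$, then runs a substantial case analysis (penny-farthing present or not; $|x^+|=2$ or $3$; various configurations of long morphisms), repeatedly using cleaving diagrams in $\overrightarrow{\Ld(x)}/\eta$ to rule out unwanted relations. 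Your outline does not identify $\alpha$-filtrations as the target object, and the alternative endgames you suggest (invertibility of the Cartan matrix, finiteness of the finitistic dimension ``supplied by mildness'') are unproved assertions that the paper neither needs nor establishes.
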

Of course, it follows immediately that the strong no loop conjecture holds for all mild algebras, in particular for all representation-finite algebras.
\begin{cor}\label{maincor}
Let $\Ld$ be a mild algebra over an algebraically closed field. Let $S$ be a simple $\Ld$-module. If the projective dimension of $S$ is finite, then $\Ext^1_{\Ld}(S,S)=0$.
\end{cor}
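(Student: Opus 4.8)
The plan is to deduce Corollary \ref{maincor} directly from Theorem \ref{maintheorem}; the only work is to check that every corner algebra of a mild algebra is again mild, after which each neighborhood $\Ld(x)$ is automatically mild.

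First I would pass to the basic case. The properties ``mild'', ``$\pd_\Ld S<\infty$'' and ``$\Ext^1_\Ld(S,S)=0$'' are all Morita invariant: projective dimensions are preserved, and the lattice of two-sided ideals of $\Ld$ is isomorphic to that of the associated basic algebra $\Ld'$ via a bijection $J\leftrightarrow J'$ with $\Ld/J$ Morita equivalent to $\Ld'/J'$. So I may assume $\Ld=\bk\sQ/I$ and $S=S_x$, in which case $\Ext^1_\Ld(S_x,S_x)=0$ is exactly the assertion that there is no loop at $x$. By Theorem \ref{maintheorem} it therefore suffices to show that $\Ld(x)=e\Ld e$ is mild, $e$ being the idempotent defined before that theorem.

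The key point is then: if $\Ld$ is mild and $e\in\Ld$ is an idempotent, then $e\Ld e$ is mild. For any ideal $\bar{J}\trianglelefteq e\Ld e$ one checks $e(\Ld\bar{J}\Ld)e=\bar{J}$: the inclusion $\supseteq$ is clear, and for $\subseteq$ a generator $ajb$ of $\Ld\bar{J}\Ld$ satisfies $e(ajb)e=(eae)\,j\,(ebe)\in\bar{J}$ because $j=eje$. Hence $\phi\colon J\mapsto eJe=e\Ld e\cap J$ is a surjective map from the ideal lattice of $\Ld$ onto that of $e\Ld e$ (surjectivity because $\phi(\Ld\bar{J}\Ld)=\bar{J}$); it visibly preserves sums, and it preserves intersections because $x\in eJ_1e\cap eJ_2e$ forces $x\in J_1\cap J_2$ and so $x=exe\in e(J_1\cap J_2)e$. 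As a lattice homomorphic image of a distributive lattice, the ideal lattice of $e\Ld e$ is distributive. Moreover, given a non-zero ideal $\bar{J}\trianglelefteq e\Ld e$ with $\bar{J}\neq e\Ld e$, set $J:=\Ld\bar{J}\Ld$; then $0\neq J\neq\Ld$, so $\Ld/J$ is representation-finite, and $e\Ld e/\bar{J}=e\Ld e/(e\Ld e\cap J)\cong\bar{e}(\Ld/J)\bar{e}$ is a corner algebra of $\Ld/J$. Since a corner $fAf$ of a representation-finite algebra $A$ is representation-finite --- every $fAf$-module $N$ equals $(N\otimes_{fAf}fA)f$, so there are only finitely many indecomposable $fAf$-modules --- the quotient $e\Ld e/\bar{J}$ is representation-finite (the case $\bar{J}=e\Ld e$ being trivial). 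Thus $e\Ld e$ is mild, Theorem \ref{maintheorem} applies to $x$, there is no loop at $x$, and therefore $\Ext^1_\Ld(S,S)=0$.

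I do not expect any serious obstacle here: everything of substance is contained in Theorem \ref{maintheorem}, and the remaining points --- that distributivity descends to the corner $e\Ld e$ and that corners of representation-finite algebras stay representation-finite --- are routine bookkeeping.
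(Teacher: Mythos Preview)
Your argument is correct and is exactly the route the paper intends: the paper simply states that the corollary ``follows immediately'' from Theorem~\ref{maintheorem}, taking for granted that the neighborhood $\Ld(x)=e\Ld e$ of a mild algebra is again mild, while you have carefully written out precisely this verification (surjective lattice homomorphism $J\mapsto eJe$ for distributivity, and that corners of representation-finite algebras are representation-finite for the mildness of proper quotients). So there is no difference in approach, only in level of detail; your parenthetical ``the case $\bar{J}=e\Ld e$ being trivial'' is redundant since you already assumed $\bar{J}\neq e\Ld e$, but this is harmless.
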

In order to prove the theorem we do not look at projective resolutions. Instead we refine a little bit the K-theoretic arguments of Lenzing \cite[Satz 5]{Len69}, also used by Igusa in his proof of the strong no loop conjecture for monomial algebras \cite[Corollary 6.2]{Igu90}, to obtain the following result:
\begin{prop}\label{lenzingsresult}
    Let $\Ld=\bk\sQ/I$ be a finite dimensional algebra, $x$ a point in $\sQ$ and $\alpha$ an oriented cycle at $x$. If $P_x$ has an $\alpha$-filtration of finite projective dimension, then $\alpha$ is not a loop.
\end{prop}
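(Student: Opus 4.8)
The plan is to compute a single Hattori--Stallings trace in two incompatible ways. Recall that for a $\Ld$-module $M$ of finite projective dimension and an $\Ld$-endomorphism $g$ of $M$ there is a well-defined class $\operatorname{tr}(g)\in\Ld/[\Ld,\Ld]$, obtained by lifting $g$ to a chain endomorphism $(\tilde g^{\,i})_i$ of a finite projective resolution $P^\bullet\to M$ and setting $\operatorname{tr}(g)=\sum_i(-1)^i\operatorname{tr}(\tilde g^{\,i})$, the inner $\operatorname{tr}$ being the ordinary trace of an endomorphism of a finitely generated projective. I would record three formal facts: (i) $\operatorname{tr}(g)$ is independent of the choices, since traces of null-homotopic chain maps vanish; (ii) the trace of the zero endomorphism is $0$; and (iii) if $g$ preserves a chain $M=M_0\supseteq M_1\supseteq\dots\supseteq M_r=0$ all of whose subquotients have finite projective dimension, then $\operatorname{tr}(g)=\sum_i\operatorname{tr}(\bar g_i)$, where $\bar g_i$ is the endomorphism induced on $M_i/M_{i+1}$. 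Fact (iii) follows by induction from the case of one short exact sequence $0\to A\to B\to C\to0$ preserved by $g$, which one handles with a horseshoe resolution $P_B^\bullet=P_A^\bullet\oplus P_C^\bullet$ and a compatible lift of $g$ that is block triangular in each degree, using that the trace of a block-triangular endomorphism is the sum of the traces of its diagonal blocks. These are precisely the $K$-theoretic ingredients behind Lenzing's argument.

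Suppose, for contradiction, that $\alpha$ is a loop at $x$, i.e.\ an arrow $x\to x$. Then $\alpha$ is a non-zero element of $e_x\Ld e_x$, nilpotent because it has positive length, and $f_\alpha\colon P_x\to P_x$, $p\mapsto\alpha p$, is an endomorphism of the projective module $P_x=e_x\Ld$. I would first compute $\operatorname{tr}(f_\alpha)$ directly: in $\Ld=e_x\Ld\oplus(1-e_x)\Ld$, left multiplication by $\alpha$ on $\Ld$ restricts to $f_\alpha$ on $e_x\Ld$ and to $0$ on $(1-e_x)\Ld$, so $\operatorname{tr}(f_\alpha)$ is the trace of left multiplication by $\alpha$ on the free module $\Ld$, namely the class $\bar\alpha$ of $\alpha$ in $\Ld/[\Ld,\Ld]$. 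The key point is that $\bar\alpha\neq0$: writing $\Ld/[\Ld,\Ld]=\kQ/(I+[\kQ,\kQ])$ and grading $\kQ$ by path length, the ideal $I$ lives in degrees $\ge2$, whereas an inspection of commutators shows that the degree-one part of $[\kQ,\kQ]$ is exactly the span of the non-loop arrows; hence the loops form a basis of the degree-one part of $\Ld/[\Ld,\Ld]$.

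On the other hand, the hypothesis gives an $\alpha$-filtration $P_x=M_0\supseteq M_1\supseteq\dots\supseteq M_r=0$ with $\alpha M_i\subseteq M_{i+1}$ and every subquotient $M_i/M_{i+1}$ of finite projective dimension; each $M_i$ then has finite projective dimension, being an iterated extension of such factors. Because $f_\alpha(M_i)=\alpha M_i\subseteq M_{i+1}\subseteq M_i$, the endomorphism $f_\alpha$ preserves this chain, and because $\alpha M_i\subseteq M_{i+1}$ it induces the zero map on each $M_i/M_{i+1}$. Applying (iii) and then (ii) gives $\operatorname{tr}(f_\alpha)=\sum_i\operatorname{tr}(0)=0$, contradicting $\operatorname{tr}(f_\alpha)=\bar\alpha\neq0$. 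Therefore $\alpha$ is not a loop; when $\alpha$ has length $\ge2$ there is nothing to prove.

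The one step I expect to need genuine care is the non-vanishing $\bar\alpha\neq0$ — that a loop really survives in $\Ld/[\Ld,\Ld]$ — where the hypothesis that $I$ contains no relations of length $\le1$ is used; the well-definedness and additivity of the Hattori--Stallings trace in this generality, and the identification $\operatorname{tr}(f_\alpha)=\bar\alpha$, are standard if somewhat tedious homological bookkeeping. Conceptually this is a mild sharpening of Lenzing's argument: rather than invoking finite global dimension to make every relevant trace vanish, one only needs finiteness of projective dimension for the factors of a filtration that $f_\alpha$ happens to respect.
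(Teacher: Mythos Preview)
Your proof is correct and is precisely the Hattori--Stallings trace argument of Lenzing that the paper invokes. The paper does not spell out a proof of this proposition; it simply cites \cite[Satz 5]{Len69} for the special filtration $M_i=\alpha^i\Lambda$ and remarks that the same proof works for arbitrary $\alpha$-filtrations, and what you have written is exactly that proof with the details filled in.
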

Here an $\alpha$\textbf{-filtration} $\mathcal{F}$ of $P_x$ is a filtration
\[P_x=M_0\supset M_1\supset\ldots\supset M_n=0\]
by submodules with
\[\alpha M_i\subset M_{i+1}\ \forall\ i=0\ldots n-1 .\]
The filtration $\mathcal{F}$ has finite projective dimension if $\pd_{\Ld} M_i<\infty$ holds for all $i=1\ldots n-1$.\par
This proposition is shown by Lenzing in \cite[Satz 5]{Len69} for the special filtration $M_i=\alpha^{i}\Ld$, but his proof remains valid for all $\alpha$-filtrations.

Our strategy to prove Theorem \ref{maintheorem} is then as follows: We consider the point $x$ with $\pd_{\Ld}S_x<\infty$ and its mild neighborhood $A:=\Ld(x)$. We assume in addition that there is a loop $\alpha$ in $x$. Then we deduce a contradiction either by showing that $\pd_{\Ld} S_x=\infty$ or by constructing a certain $\alpha$-filtration $\mathcal{F}$ of $P_x$ having finite projective dimension in $\rmod\Ld$ and implying that $\alpha$ is not a loop by Proposition \ref{lenzingsresult}. Since $\Ld(x)$ contains the support of $P_x$, this filtrations coincide for $P_x$ as a $\Ld$-module and as a $\Ld(x)$-module. Thus we are dealing with a mild algebra, and we use in an essential way the deep structure theorems about such algebras given in \cite{BGRS85} and \cite{B09} to obtain the wanted $\alpha$-filtrations. In particular, we show that we always work in the ray-category attached to $\Ld(x)$. This makes it much easier to use cleaving diagrams. But still the construction of the appropriate $\alpha$-filtrations depends on the study of several cases and it remains a difficult technical problem. The $\alpha$-filtrations are always built in such a way that they have finite projective dimension in $\rmod\Ld$ provided $\pd_{\Ld}S_x<\infty$.\par

To illustrate the method by two examples we define $\langle w_1,\ldots, w_k\rangle$ as the submodule of $P_x$ generated by elements $w_1,\ldots ,w_k\in P_x$.
\begin{example}
Let $\Ld$ be an algebra such that $\Ld(x)$ is given by the quiver
\[\sQ=
	\xymatrix@!=1.5pc{
    & x\ar@(lu,ru)[]^(.2){\alpha}\ar[d]^{\beta_1}\ar[dl]_{\gamma_1} & \\
	z\ar[r]^{\gamma_2} & y_1\ar[r]^{\beta_2} & y_2 \ar[ul]_{\beta_3}
	}\ \]
and a relation ideal $I$ such that the projective module $P_x$ is described by the following graph:
\[\xymatrix@!=1pc{
& e_x \ar[dr] \ar[dl] \ar[d] & \\
\gamma_1\ar[d]_{\gamma_2} & \alpha\ar[d]\ar[dl] & \beta_1\ar[d] \\
\alpha\beta_1  & \alpha^2 & \beta_1\beta_2\ar[l]
}.\]
Notice that the picture means that there are relations $\alpha^2-\ld_1\beta_1\beta_2\beta_3,\ \alpha\beta_1-\ld_2\gamma_1\gamma_2 \in I$ for some $\ld_i\in\bk\setminus\{0\}$. From the obvious exact sequences
\[0\to\rad P_x\to P_x\to S_x\to 0\]
\[0\to \langle\beta_1,\gamma_1\rangle\to\rad P_x\to S_x\to 0\]
\[0\to \langle\alpha^2,\gamma_1\rangle\to\langle\alpha,\gamma_1\rangle\to S_x\to 0\]
we see that $\pd_{\Ld}S_x<\infty$ leads to $\pd_{\Ld}\rad P_x<\infty$ and $\pd_{\Ld}\langle\beta_1,\gamma_1\rangle<\infty$. Since $\langle\beta_1,\gamma_1\rangle=\langle\beta_1\rangle\oplus\langle\gamma_1\rangle$ and $\langle\alpha^2,\gamma_1\rangle=\langle\alpha^2\rangle\oplus\langle\gamma_1\rangle$ in this example, both $\pd_{\Ld}\langle\gamma_1\rangle$ and $\pd_{\Ld}\langle\alpha,\gamma_1\rangle$ are finite.
Then the following $\alpha$-filtration $\mathcal{F}$: $P_x\supset \langle \alpha,\gamma_1 \rangle \supset \langle \alpha^2 \rangle \supset 0$ has finite projective dimension in $\rmod\Ld$.
\end{example}

In the next example we see that this method may not work if the neighborhood $\Ld(x)$ is not mild, even if the support of $P_x$ is mild.
\begin{example}
  Let $\Ld(x)=\bk\sQ/I$ be given by the quiver \[\sQ=
  \xymatrix@!=1pc{
  x\ar@(ul,dl)[]_(.2){\alpha}\ar@/^0.5pc/[r]^{\beta_1} \ar[d]^\gamma & y\ar@/^0.5pc/[l]^{\beta_2}\ar@/^0.5pc/[d] \\
  z\ar@(ul,dl)[]_(.2){\delta}\ar@/^0.5pc/[r] & z'\ar@/^0.5pc/[l]\ar@/^0.5pc/[u]
  }\] and by a relation ideal $I$ such that $P_x$ is represented by
\[\xymatrix@!=1pc{
& e_x \ar[dr] \ar[dl] \ar[d] & \\
\gamma\ar[d]_\delta & \alpha \ar[d]\ar[dl] & \beta_1 \ar[dl]^{\beta_2} \\
\alpha\gamma  & \alpha^2 &
}.\]
Here we get stuck because the uniserial module with basis $\{\gamma,\alpha\gamma\}$ allows only the composition series as an $\alpha$-filtration. Since we do not know $\pd_{\Ld}S_z$, which depends on $\Ld$ and not only on $\Ld(x)$, our method does not apply.
\end{example}

The article is organized as follows: In the second section we recall some facts about ray-categories and we show how to reduce the proof to standard algebras without penny-farthings. This case is then analyzed in the last section.

The results of this article are contained in my PhD-thesis written at the University of Wuppertal.

\textbf{Acknowledgment:} I would like to thank Klaus Bongartz for his support and for very helpful discussions.

\section{The reduction to standard algebras}
\subsection{Ray-categories and standard algebras}
We recall some well-known facts from \cite{BGRS85}, \cite{GR92}.

Let $A:=\Ld(x)=\bk\sQ_A/I_A$ be a basic distributive $\bk$-algebra. Then every space $e_xAe_y$ is a cyclic module over $e_xAe_x$ or $e_yAe_y$ and we can associate to $A$ its \textbf{ray-category} $\overrightarrow{A}$. Its objects are the points of $\sQ_A$. The morphisms in $\overrightarrow{A}$ are called \textbf{rays} and $\overrightarrow{A}(x,y)$ consists of the orbits $\overrightarrow{\mu}$ in $e_xAe_y$ under the obvious action of the groups of units in $e_xAe_x$ and $e_yAe_y$. The composition of two morphisms $\overrightarrow{\mu}$ and $\overrightarrow{\nu}$ is either the orbit of the composition $\mu\nu$, in case this is independent of the choice of representatives in $\overrightarrow{\mu}$ and $\overrightarrow{\nu}$, or else $0$. We call a non-zero morphism $\eta\in \overrightarrow{A}$ \textbf{long} if it is non-irreducible and satisfies $\nu\eta=0=\eta\nu'$ for all non-isomorphisms $\nu,\nu'\in \overrightarrow{A}$. One crucial fact about ray-categories frequently used in this paper is that $A$ is mild iff $\overrightarrow{A}$ is so \cite[see Theorem 13.17]{GR92}.

The ray-category is a finite category characterized by some nice properties. For instance, given $\ld\mu\kappa=\ld\nu\kappa\neq 0$ in $\overrightarrow{A}$, $\mu=\nu$ holds. We shall refer to this property as the \textbf{cancellation law}.

Given $\overrightarrow{A}$, we construct in a natural way its linearization $\bk(\overrightarrow{A})$ and obtain a finite dimensional algebra \[\overline{A}=\bigoplus_{x,y\in\sQ_A}\bk(\overrightarrow{A})(x,y),\] the \textbf{standard form} of $A$. In general, $A$ and $\overline{A}$ are not isomorphic, but they are if either $A$ is minimal representation-infinite \cite[Theorem 2]{B09} or representation-finite with $\mathrm{char} \bk\neq 2$ \cite[Theorem 13.17]{GR92}.\\
Similar to $A$, the ray-category $\overrightarrow{A}$ admits a description by quiver and relations. Namely, there is a canonical full functor ${}^{\to}: \mathcal{P}\sQ_A \to \overrightarrow{A}$ from the path
category of $\sQ_A$ to $\overrightarrow{A}$.
Two paths in $\sQ_A$ are \textbf{interlaced} if they belong to the transitive closure of the
relation given by $v\sim w$ iff $v = pv'q,\ w = pw'q$ and $\overrightarrow{v'} = \overrightarrow{w'} \neq 0$, where $p$
and $q$ are not both identities.\\
A \textbf{contour} of $\overrightarrow{A}$ is a pair $(v, w)$ of non-interlaced paths with $\overrightarrow{v} = \overrightarrow{w} \neq 0$. Note that these contours are called essential contours in \cite[2.7]{BGRS85}. Throughout this paper we will need a special kind of contours called penny farthings. A \textbf{penny-farthing} $P$ in $\overrightarrow{A}$ is a contour $(\sigma^2,\rho_1\ldots\rho_s)$ such that the full subquiver $\sQ_P$ of $\sQ_A$ that supports the arrows of $P$ has the following shape:

\[\begin{tikzpicture}[scale=2.5,cap=round,->,>=latex]
        \path (0:1.0cm) edge [loop right] node {$\sigma$} (0:1.0cm);
        \draw[->] (20:1cm) to node[right=0.05cm] {$\rho_2$} (40:1cm) node {$\bullet$} node[left=0.05cm] {$z_3$};
        \draw[->] (0:1cm) node {$\bullet$} node[left=0.05] {$z_1$} to node[right=0.05cm] {$\rho_1$} (20:1cm) node {$\bullet$} node[left=0.05cm] {$z_2$};
        \draw[->] (340:1cm) node {$\bullet$} node[left=0.05cm] {$z_s$} to node[right=0.05cm] {$\rho_s$}(360:1cm);
        \draw[->] (320:1cm) node {$\bullet$} node[left=0.05cm] {$z_{s-1}$} to node[right=0.05cm] {$\rho_{s-1}$}(340:1cm);
        \foreach \x in {40,60,...,320} {
                \filldraw[black] (\x:1cm) circle(0.1pt);
        }
\end{tikzpicture}\]
Moreover, we ask the full subcategory $A_P\subset A$ living on $\sQ_P$ to be defined by $\sQ_P$ and one of the following two systems of relations
\begin{eqnarray}
  0=\sigma^2-\rho_1\ldots\rho_s &=& \rho_s\rho_1=\rho_{i+1}\ldots\rho_s\sigma\rho_1\ldots\rho_{f(i)},\\
  0=\sigma^2-\rho_1\ldots\rho_s &=& \rho_s\rho_1-\rho_s\sigma\rho_1=\rho_{i+1}\ldots\rho_s\sigma\rho_1\ldots\rho_{f(i)},
\end{eqnarray}
where $f:\{1,2,\ldots , s-1\}\to\{1,2,\ldots , s\}$ is some non-decreasing function (see \cite[2.7]{BGRS85}. For penny-farthings of type (1) $A_P$ is standard, for that of type (2) $A_P$ is not standard in case the characteristic is two.
\par
A functor $F : D \to \overrightarrow{A}$ between ray categories is \textbf{cleaving} (\cite[13.8]{GR92}) iff
it satisfies the following two conditions and their duals:
\begin{itemize}
\item[a)] $F(\mu) = 0$ iff $\mu = 0$.
\item[b)] If $\eta\in D(y, z)$ is irreducible and $F (\mu) : F (y) \to F (z')$ factors through $F (\eta)$ then $\mu$
factors already through $\eta$.
\end{itemize}
The key fact about cleaving functors is that $\overrightarrow{A}$ is not
representation finite if $D$ is not.
In this article $D$ will always be given by its quiver $\sQ_D$, that has no oriented
cycles and some relations. Two paths between the same points give always
the same morphism, and zero relations are indicated by a dotted line. As in \cite[section 13]{GR92}, the cleaving functor is then defined by drawing the quiver of $D$ with
relations and by writing the morphism $F(\mu)$ in $\overrightarrow{A}$ close to each arrow $\mu$.
\par
By abuse of notation, we denote the irreducible rays of $\overrightarrow{A}$ and the corresponding arrows of $\sQ_A$ by the same letter.

\subsection{Getting rid of penny-farthings}\label{section_penny}
Using the above notations let $P=(\sigma^2,\rho_1\ldots\rho_s)$ be a penny-farthing in $\overrightarrow{A}$. We shall show now that $x=z_1$. Therefore $\sigma=\alpha$ and $P$ is the only penny-farthing in $\overrightarrow{A}$ by \cite[Theorem 13.12]{GR92}.

\begin{lemma}\label{L2.1}
If there is a penny-farthing $P=(\sigma^2,\rho_1\ldots\rho_s)$ in $\overrightarrow{A}$, then $z_1=x$.
\end{lemma}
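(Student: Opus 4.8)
The goal is to show $z_1 = x$ for a penny-farthing $P = (\sigma^2, \rho_1\ldots\rho_s)$ in $\overrightarrow{A}$, where $A = \Lambda(x)$. Recall that the defining property of the neighborhood $A = e\Lambda e$ is that $e$ is built from $e_x$ together with idempotents at vertices $z$ which either lie in the support of $P_x = e_x\Lambda$, or admit an arrow $z \to x$, or sit in a configuration $y' \leftarrow x \rightleftarrows y \leftarrow z$. The first thing I would do is unwind what it means for $z_1$ to lie in $\sQ_A$: every vertex of the penny-farthing quiver $\sQ_P$ — in particular $z_1$, the base of the loop $\sigma$ — is a vertex of $\sQ_A$, so $z_1$ must fall into one of these three classes relative to $x$. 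The plan is to rule out the possibility that $z_1 \neq x$ by examining each class and deriving a contradiction with the structure of a penny-farthing (a loop $\sigma$ at $z_1$, together with the cycle $\rho_1\ldots\rho_s$ through $z_1$ and the relations (1) or (2)).

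The key structural input is that a penny-farthing carries a loop $\sigma = \sigma_{z_1}$ at $z_1$ with $\sigma^2 \neq 0$ but $\sigma^2 = \rho_1\ldots\rho_s$, and by \cite[Theorem 13.12]{GR92} a mild ray-category has at most one penny-farthing; moreover the full subcategory $A_P$ on $\sQ_P$ is explicitly the one given by relations (1) or (2). I would argue as follows. Suppose $z_1 \neq x$. If there were an arrow $z_1 \to x$ in $\sQ$, then together with the loop $\sigma$ at $z_1$ and the cycle of the $\rho_i$, one produces a configuration in $\sQ_A$ that cannot be embedded in a mild ray-category — concretely, one builds a cleaving functor from a quiver $D$ with no oriented cycles whose representation type is infinite (the loop at $z_1$ gives a "big" local algebra at $z_1$ once combined with the outgoing arrow and the returning cycle), contradicting mildness of $\overrightarrow{A}$ via the key fact about cleaving functors. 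The case of a configuration $y' \leftarrow x \rightleftarrows y \leftarrow z_1$ is handled similarly: the presence of the loop $\sigma$ at $z_1$ feeding into this "tame-critical"-type configuration again yields a non-representation-finite cleaving diagram. The remaining case is that $z_1$ lies in the support of $P_x$ but is distinct from $x$; here I would use that $e_x \overrightarrow{A} e_{z_1} \neq 0$, pick a ray $\mu: x \to z_1$, and compose with the loop $\sigma$ at $z_1$ to get an infinite chain $\mu, \mu\sigma, \mu\sigma^2 = \mu\rho_1\ldots\rho_s, \ldots$ inside $P_x$; reconciling this with the finiteness of $A$ and the cancellation law forces a collision that, through the penny-farthing relations, again contradicts mildness (or directly contradicts $x \neq z_1$).

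The main obstacle I anticipate is the middle case — $z_1$ in the support of $P_x$ but $z_1 \neq x$ — because "support of $P_x$" is a looser condition than having a direct arrow, so one cannot immediately read off a small forbidden configuration; one has to trace a possibly long ray from $x$ into $z_1$ and argue that its interaction with the loop $\sigma$ and the returning cycle $\rho_1\ldots\rho_s$ is incompatible with $\overrightarrow{A}$ being mild and finite. I expect the argument to go through by building an explicit cleaving functor $D \to \overrightarrow{A}$ whose source $D$ is a quiver without oriented cycles of infinite representation type, using the ray $x \to z_1$, the loop $\sigma$, and enough of the $\rho_i$ to close up; the bookkeeping of which rays are nonzero (via the cancellation law and the explicit relations (1)/(2)) is the technical heart. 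Once $z_1 = x$ is established, it is immediate that $\sigma$ is a loop at $x$, hence $\sigma = \alpha$ after relabeling, and uniqueness of the penny-farthing follows from \cite[Theorem 13.12]{GR92}, as claimed in the surrounding text.
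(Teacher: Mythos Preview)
Your case split (checking which neighborhood condition $z_1$ satisfies) matches the paper's, but there are two genuine gaps. First, the ``infinite chain'' argument for the case $z_1 \in \operatorname{supp} P_x \setminus \{x\}$ fails: $\sigma$ lies in the radical of the finite-dimensional algebra $A$ and is therefore nilpotent, so the chain $\mu, \mu\sigma, \mu\sigma^2, \ldots$ simply terminates at zero after finitely many steps without producing any collision or contradiction. Second, and more importantly, you are missing the decisive external input the paper relies on: the structure theorem \cite[Theorem 1]{B85} (equivalently \cite[Lemma 13.15]{GR92}) and its dual, which describe exactly how a vertex $x \notin \sQ_P$ may attach to a penny-farthing in a mild ray-category. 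The paper invokes this to conclude, in both the support case and the arrow-$z_1\to x$ case, that $s = 2$ and that there is exactly one arrow starting (resp.\ ending) at $x$; the loop $\alpha$ then gives an immediate contradiction, with no ad hoc cleaving diagram needed. Your plan to ``build an explicit cleaving functor'' in these cases would, if carried out, amount to reproving a substantial portion of that theorem.

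You also overlook an easy preliminary reduction the paper makes first: if $x \in \sQ_P$, then since $\sQ_P$ is by definition the \emph{full} subquiver of $\sQ_A$ on the penny-farthing vertices and carries exactly one loop (namely $\sigma$ at $z_1$), the presence of the loop $\alpha$ at $x$ forces $x = z_1$ immediately. Only after this does the three-case analysis for $x \notin \sQ_P$ begin. Finally, in the configuration case c) the paper does construct an explicit cleaving diagram of type $\widetilde{E}_7$ (after a further subcase split on whether $y \in \sQ_P$), so that is the one place where your plan and the paper's execution genuinely align.
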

\begin{proof} We consider two cases:
\begin{itemize}
\item $x\in\sQ_{P}$: Hence $\sQ_{P}$ has the following shape:
\[\xymatrix@!=2pc{
z_1 \ar@(ul,dl)[]_{\sigma} \ar@/^1pc/[rr]^{\rho_1\ldots \rho_l} & & x \ar@/^1pc/[ll]^{\rho_{l+1}\ldots \rho_s} \ar@(ur,dr)[]^{\alpha}
}\]
But this can be the quiver of a penny-farthing only for $z_1=x$.
\item $x\notin\sQ_{P}$: Since $A$ is the neighborhood of $x$, only the following cases are possible:
\begin{itemize}
\item[a)] $e_x A e_z\neq 0$: Since $x\notin \sQ_{P}$ we can apply the dual of \cite[Theorem 1]{B85} or \cite[Lemma 13.15]{GR92} to $\overrightarrow{A}$ and we see that the following quivers occur as subquivers of $\sQ_A$:
\[\xymatrix@!=2pc{
z_1 \ar@(ul,dl)[]_{\sigma} \ar@/^1pc/[r]^{\rho_1} & z_2 \ar@/^1pc/[l]^{\rho_2}\\
x \ar@(ul,dl)[]_{\alpha} \ar[u] &
}\ \ \ resp.\ \ \
\xymatrix@!=2pc{
z_1 \ar@(ul,dl)[]_{\sigma} \ar@/^1pc/[r]^{\rho_1} & z_2 \ar@/^1pc/[l]^{\rho_2}\\
&  x \ar@(ul,dl)[]_{\alpha} \ar[u]
}.\]
Moreover, there can be only one arrow starting in $x$. This is a contradiction to the actual setting.

\item[b)] $\exists\ z_1 \to x$: By applying \cite[Theorem 1]{B85} or the dual of \cite[Lemma 13.15]{GR92} we deduce that the following quiver occurs as a subquiver of $\sQ_A$:
\[\xymatrix@!=2pc{
z_1 \ar@(ul,dl)[]_{\sigma} \ar@/^1pc/[r]^{\rho_1} \ar[d] & z_2 \ar@/^1pc/[l]^{\rho_2}\\
x \ar@(ul,dl)[]_{\alpha} &
}\]
and there can be only one arrow ending in $x$ contradicting the present case.

\item[c)] $\exists\ y'\leftarrow x\rightleftarrows y\leftarrow z_1$: If $y\notin \sQ_P$, then
\[\xymatrix@!=2pc{
z_1 \ar@(ul,dl)[]_{\sigma} \ar@/^1pc/[r]^{\rho_1} \ar[d] & z_2 \ar@/^1pc/[l]^{\rho_2}\\
y \ar@/^1pc/[r] & x \ar@/^1pc/[l]
}\] is a subquiver of $\sQ_A$ leading to the same contradiction as in b).\\
If $y\in\sQ_P$, then $y=z_2$ and the quiver
\[\xymatrix@!=2pc{
x \ar@(ul,dl)[]_{\alpha} \ar@/^1pc/[r]^{\beta_1} \ar[d]_{\gamma} & z_2\ar@/^1pc/[l]^{\beta_2} & z_1\ar@(ur,dr)[]^{\sigma} \ar[l]_{\rho_1}\\
y' & &
}\] is a subquiver of $\sQ_A$. Since $x\notin\sQ_P$, all morphisms occurring in the following diagram
\[D:=\xymatrix@!=1pc{
\bullet & \bullet\ar[l]_{\rho_2}\ar[r]^{\beta_2} & \bullet & \bullet\ar[l]_\alpha\ar[d]_{\gamma}\ar[r]^{\beta_1} & \bullet & \bullet \ar[l]_{\rho_1}\ar[r]^{\sigma} & \bullet \\
 &  &  & \bullet  & & & }\]
are irreducible and pairwise distinct. Therefore $D$ is a cleaving diagram in $\overrightarrow{A}$. Moreover, some long morphism $\eta=\nu\sigma^3\nu'$ does not occur in $D$; hence $D$ is still cleaving in $\overrightarrow{A}/\eta$ by \cite[Lemma 3]{B09}. Since $D$ is of representation-infinite Euclidean type $\widetilde{E}_7$, $\overrightarrow{A}/\eta$ is representation-infinite contradicting the mildness of $A$.
\end{itemize}
\end{itemize}
\end{proof}

Now, we show that, provided the existence of a penny-farthing in $\overrightarrow{A}$, there exists an $\alpha$-filtration of $P_x$ having finite projective dimension.

\begin{lemma}\label{L2.2}
Let $A=\Ld(x)$ be mild and standard. If there is a penny-farthing in $\overrightarrow{A}$, then there exists an $\alpha$-filtration $\mathcal{F}$ of $P_x$ having finite projective dimension.
\end{lemma}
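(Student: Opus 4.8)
The plan is to use the explicit structure of a penny-farthing to write down an $\alpha$-filtration of $P_x$ by hand. By Lemma \ref{L2.1} we know $z_1 = x$, so $\sigma = \alpha$ and the penny-farthing is $P = (\alpha^2, \rho_1\ldots\rho_s)$, with $A$ standard this is forced to be of type (1), i.e. the relations on the subquiver $\sQ_P$ are
\[
\alpha^2 = \rho_1\ldots\rho_s,\qquad \rho_s\rho_1 = 0,\qquad \rho_{i+1}\ldots\rho_s\,\alpha\,\rho_1\ldots\rho_{f(i)} = 0
\]
for a non-decreasing $f$. Since $A$ is the neighborhood of $x$ and contains the support of $P_x$, I can read off the top of $P_x$: besides $\alpha$ and $\rho_1$, any further arrow out of $x$ must — by the neighborhood condition and \cite[Lemma 13.15]{GR92} applied to $\overrightarrow{A}$ — be a "third leg" $\gamma$ with $y' \leftarrow x \rightleftarrows z_2 \leftarrow x$ absent, so in fact $\rad P_x/\rad^2 P_x$ is spanned by the classes of $\alpha$, $\rho_1$ and possibly a few arrows $\gamma_j$ none of which re-enters the cycle. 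The key observation is that $\alpha^2 = \rho_1\ldots\rho_s$ means the element $\rho_1\ldots\rho_s \in P_x$ already lies in $\alpha^2 P_x$, so the long cycle does not create an independent generator at the bottom.

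First I would define the candidate filtration. Set
\[
P_x \;\supset\; \langle \alpha, \gamma_1,\ldots,\gamma_r\rangle \;\supset\; \langle \alpha^2,\, \alpha\gamma_1,\ldots \rangle \;\supset\; \ldots \;\supset\; \langle \alpha^m \rangle \;\supset\; 0,
\]
where $m$ is chosen so that $\alpha^{m+1} = 0$ in $A$ (such $m$ exists since $A$ is finite dimensional, and because $\alpha^2 = \rho_1\ldots\rho_s$ the powers of $\alpha$ interact with the $\rho$-path in a controlled way). The point is that $\alpha M_i \subset M_{i+1}$ holds by construction. What remains is to check that each proper $M_i$ has finite projective dimension in $\rmod\Ld$, under the standing hypothesis $\pd_\Ld S_x < \infty$. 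As in the first example of the introduction, I would do this by peeling off copies of $S_x$: each quotient $M_i/M_{i+1}$ should be a direct sum of $S_x$ together with modules supported away from the loop, whose projective dimension over $\Ld$ is controlled because the relevant submodules split off (e.g. $\langle\rho_1\rangle$, $\langle\gamma_j\rangle$ are direct summands, being supported on a part of the quiver that meets $x$ only through a single arrow). Then a descending induction using the short exact sequences $0 \to M_{i+1} \to M_i \to M_i/M_{i+1} \to 0$ propagates finiteness of projective dimension from $S_x$ downward, exactly the mechanism illustrated in Example 2.3.

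The main obstacle, I expect, is verifying that the submodules generated along the cycle actually decompose as claimed — that the "third legs" $\gamma_j$ and the tail $\rho_1\ldots$ split off as direct summands of the intermediate modules, so that their finite projective dimension over $\Ld$ (not just over $A$) is available. This is where one must use the penny-farthing relations $\rho_{i+1}\ldots\rho_s\,\alpha\,\rho_1\ldots\rho_{f(i)} = 0$ and $\rho_s\rho_1 = 0$ carefully: they guarantee that once one leaves the loop via $\rho_1$, one cannot return to $x$, so $\langle\rho_1\rangle$ is a module over the quotient of $A$ killing $\alpha$, which is a proper factor and hence representation-finite — and, more to the point, its projectivity behaviour over $\Ld$ is forced by the exact sequences in which it appears together with $S_x$. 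A secondary technical point is bookkeeping when $s$ is small (say $s = 1$ or $2$) or when $f$ is constant, where the module graph of $P_x$ degenerates; these special shapes should be handled separately but pose no conceptual difficulty. Once all intermediate $M_i$ are shown to have finite projective dimension, $\mathcal F$ is the desired $\alpha$-filtration and the lemma follows.
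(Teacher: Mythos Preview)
Your proposal has a concrete error and a structural gap.

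The error: you assert that ``once one leaves the loop via $\rho_1$, one cannot return to $x$, so $\langle\rho_1\rangle$ is a module over the quotient of $A$ killing $\alpha$.'' This is false. The defining relation of the penny-farthing is $\rho_1\ldots\rho_s=\alpha^2$, so the path $\rho_1\ldots\rho_s$ ends at $x$ and $\langle\rho_1\rangle$ contains $\alpha^2$; moreover $\alpha\cdot\alpha^2=\alpha^3$ is nonzero in the cases that actually arise (see the graphs in the paper's proof), so $\alpha$ acts nontrivially on $\langle\rho_1\rangle$. Consequently your splitting heuristic for $\langle\rho_1\rangle$ and the induction built on it do not go through.

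The structural gap: you treat the shape of $P_x$ as essentially unknown (``possibly a few arrows $\gamma_j$'', general $s$, unspecified $m$) and hope the intermediate quotients decompose nicely. The paper instead invokes the structure theorem \cite[Theorem 1]{B85} / \cite[Lemma 13.15]{GR92}, which forces exactly three explicit shapes for $P_x$ --- in two of them $s=2$ is compulsory --- and in each case $\alpha^4=0$. With the graph of $P_x$ written down, the paper uses the single filtration $P_x\supset\langle\alpha\rangle\supset\langle\alpha^2\rangle\supset\langle\alpha^3\rangle\supset 0$ (no extra $\gamma$'s in the filtration terms). Finiteness of $\pd_\Ld\langle\alpha^k\rangle$ is then obtained by explicit short exact sequences tailored to each graph: for instance in Case~I one first shows $\pd_\Ld S_y<\infty$ via $\langle\alpha\rho_1\rangle/\langle\alpha^3\rangle\cong S_y$, and in Case~III one uses $0\to\langle\alpha\rho_1\rangle\to\langle\alpha\rangle\to\langle\alpha^2\rangle\to 0$. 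These are exactly the steps your outline leaves unspecified, and they do not follow from the general mechanism you describe; they require knowing the precise module structure that the structure theorem provides.
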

\begin{proof} If there is a penny-farthing $P$ in $\overrightarrow{A}$, then $P=(\alpha^2,\rho_1\ldots \rho_s)$ is the only penny-farthing in $\overrightarrow{A}$ by the last lemma. Since $A$ is standard and mild, there are three cases for the graph of $P_x$ which can occur by \cite[Theorem 1]{B85} or the dual of \cite[Lemma 13.15]{GR92}.
\begin{itemize}
\item[I)] There exists an arrow $\gamma:x\to z,\ \gamma\neq \rho_1$. Then $s=2$, the quiver
\[\xymatrix@!=2pc{
x \ar@(ul,dl)[]_{\alpha} \ar@/^1pc/[r]^{\rho_1}  \ar[d]^{\gamma} & y \ar@/^1pc/[l]^{\rho_2} \\
z &
}\] is a subquiver of $\sQ_A$, and $P_x$ is represented by the following graph:
\[\xymatrix@!=1pc{
  & e_x \ar[dr] \ar[d] \ar[dl] &\\
\gamma & \alpha \ar[d] \ar[dr] & \rho_1 \ar[dl]\\
  & \alpha^2 \ar[d] & \alpha\rho_1 \ar[dl]\\
  & \alpha^3 &
}.\]
Let $M$ be a quotient of $P_x$ defined by the following exact sequence:
\[0 \to \langle\gamma\rangle \oplus \langle\rho_1,\alpha\rho_1\rangle \to P_x \to M \to 0.\]
Then $M$ has $S_x$ as the only composition factor. Hence $\pd_{\Ld}M<\infty$ and $\pd_{\Ld}\langle \rho_1,\alpha\rho_1\rangle<\infty$. Now, we consider the exact sequence
\[0\to \langle\alpha^3\rangle \to \langle\rho_1,\alpha\rho_1\rangle \to \langle\rho_1\rangle/\langle \alpha^3\rangle \oplus \langle\alpha\rho_1\rangle/\langle\alpha^3\rangle \to 0.\]
But $\langle\alpha^3\rangle\cong S_x$ and $\pd_{\Ld}S_x<\infty$, hence  $\langle\alpha\rho_1\rangle/\langle\alpha^3\rangle\cong S_y$ has finite projective dimension in $\rmod\Ld$. Finally, the $\alpha$-filtration $P_x \supset \langle\alpha\rangle \supset \langle\alpha^2\rangle \supset \langle\alpha^3\rangle\supset 0$ has finite projective dimension since all filtration modules $\neq P_x$ have $S_x$ and $S_y$ as the only composition factors.

\item[II)] In the second case there exists a point $z\notin\sQ_P$ such that $A(x,z)\neq 0$. Then $s=2$, the quiver
\[\xymatrix@!=2pc{
x \ar@(ul,dl)[]_{\alpha} \ar@/^1pc/[r]^{\rho_1} & y \ar@/^1pc/[l]^{\rho_2} \ar[d]^{\delta} \\
 & z
}\] is a subquiver of $\sQ_A$, and $P_x$ is represented by:
\[\xymatrix@!=1pc{
e_x \ar[dr] \ar[d] & &\\
\alpha \ar[d] \ar[dr] & \rho_1 \ar[dl] \ar[dr] & \\
\alpha^2 \ar[d] & \alpha\rho_1 \ar[dl] & \rho_1\delta\\
\alpha^3 &
}.\]
With similar considerations as in I) we obtain that the same filtration fits.

\item[III)] In the last possible case we have $A(x,z)=0$ for all points $z\notin\sQ_P$. Hence $P_x$ is represented by:
\[\xymatrix@!=1pc{
& & e_x \ar[d] \ar[rr] & & \rho_1 \ar[d]\\
\alpha\rho_1 \ar[d]& & \alpha \ar[d] \ar[ll] &  & \rho_1\rho_2 \ar@{~>}[d] \\
\alpha\rho_1\rho_2 \ar@{~>}[d]& & \alpha^2 \ar[d] &  & \rho_1\rho_2\ldots \rho_{s-1} \ar[ll]\\
\alpha\rho_1\rho_2\ldots \rho_{s-1} \ar[rr] & & \alpha^3 & &
}.\]
As a $\Ld$-Module, $M:=P_x/\langle\alpha^2\rangle$ has finite projective dimension since $\langle\alpha^2\rangle$ has $S_x$ as the only composition factor. Let $K$ be the kernel of the epimorphism
$M\to\langle\alpha^2\rangle,\ e_x\mapsto\alpha^2$, then $K=\langle\rho_1\rangle/\langle\alpha^2\rangle \oplus \langle\alpha\rho_1\rangle/\langle\alpha^3\rangle$ has finite projective dimension. Moreover, $\pd_{\Ld}\langle\rho_1\rangle, \pd_{\Ld}\langle\alpha\rho_1\rangle<\infty$.  Since
\[0\to \langle\alpha\rho_1\rangle\to \langle\alpha\rangle \overset{\ld_{\alpha}}{\to} \langle\alpha^2\rangle\to 0\] is exact, $\pd_{\Ld}\langle\alpha\rangle<\infty$. Thus the same filtration as in the first two cases fits again.
\end{itemize}
\end{proof}

\begin{lemma}\label{L2.3}
With above notations let $A=\Ld(x)$ be mild and non-standard. There exists an $\alpha$-filtration $\mathcal{F}$ of $P_x$ having finite projective dimension.
\end{lemma}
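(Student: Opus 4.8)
The plan is to reduce the non-standard case to a situation where the relevant portion of $P_x$ is already standard, and then reuse the filtration arguments of Lemma \ref{L2.2}. Recall that a penny-farthing $P=(\alpha^2,\rho_1\ldots\rho_s)$ gives a non-standard $A$ only when $\mathrm{char}\,\bk=2$ and the relations on $A_P$ are of type (2), i.e. $\rho_s\rho_1-\rho_s\sigma\rho_1=0$ together with $0=\sigma^2-\rho_1\ldots\rho_s=\rho_{i+1}\ldots\rho_s\sigma\rho_1\ldots\rho_{f(i)}$. By Lemma \ref{L2.1} we still have $z_1=x$, so $\sigma=\alpha$ is a loop at $x$ and $P$ is the only penny-farthing in $\overrightarrow{A}$. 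First I would observe that the three cases I), II), III) distinguishing the shape of the neighbourhood of the penny-farthing (an extra arrow $\gamma:x\to z$ with $\gamma\neq\rho_1$; an extra point $z\notin\sQ_P$ with $A(x,z)\neq 0$; or $A(x,z)=0$ for all $z\notin\sQ_P$) carry over verbatim, since they only used \cite[Theorem 1]{B85} / \cite[Lemma 13.15]{GR92} and mildness, both of which are insensitive to standardness.

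Next I would pin down the actual structure of $P_x$ as a $\Ld$-module. The type-(2) relations differ from the type-(1) relations only in the identity $\rho_s\rho_1=\rho_s\sigma\rho_1$ replacing $\rho_s\rho_1=0$. The crucial point is that this does not change which monomials form a basis of $P_x$, nor the submodule lattice generated by the generators $\alpha,\rho_1,\gamma$ (and $\delta$): in all three cases the graph of $P_x$ drawn in Lemma \ref{L2.2} is unchanged as a picture of the radical layers, only the meaning of the bottom arrows (the relations tying $\alpha^{k}$ to the $\rho$-path) is twisted. In particular $\langle\alpha^3\rangle\cong S_x$ (resp. $\langle\alpha^2\rangle$ has $S_x$ as its only composition factor) still holds, the short exact sequences
\[0\to\langle\gamma\rangle\oplus\langle\rho_1,\alpha\rho_1\rangle\to P_x\to M\to 0,\qquad 0\to\langle\alpha^3\rangle\to\langle\rho_1,\alpha\rho_1\rangle\to\langle\rho_1\rangle/\langle\alpha^3\rangle\oplus\langle\alpha\rho_1\rangle/\langle\alpha^3\rangle\to 0\]
(and their analogues in cases II), III)) remain exact with $M$ having only $S_x$ as composition factor, and $0\to\langle\alpha\rho_1\rangle\to\langle\alpha\rangle\xrightarrow{\ld_\alpha}\langle\alpha^2\rangle\to 0$ is exact as before. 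Hence, exactly as in Lemma \ref{L2.2}, $\pd_{\Ld}S_x<\infty$ forces $\pd_{\Ld}\langle\alpha\rangle,\pd_{\Ld}\langle\alpha^2\rangle<\infty$ and all composition factors of the filtration modules are among $S_x,S_y$, whose projective dimensions are then finite.

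Therefore I would conclude, in each of the three cases, that the $\alpha$-filtration
\[P_x\supset\langle\alpha\rangle\supset\langle\alpha^2\rangle\supset\langle\alpha^3\rangle\supset 0\qquad(\text{resp. }P_x\supset\langle\alpha\rangle\supset\langle\alpha^2\rangle\supset 0\text{ in case III}),)\]
which is literally the same as in Lemma \ref{L2.2}, has finite projective dimension in $\rmod\Ld$, since $\alpha M_i\subset M_{i+1}$ is immediate and every $M_i\neq P_x$ has all composition factors among $\{S_x,S_y\}$. The main obstacle I anticipate is verifying rigorously that the type-(2) relations do not perturb the module structure in a way that destroys the direct-sum decompositions $\langle\rho_1,\alpha\rho_1\rangle$, $\langle\gamma\rangle\oplus\cdots$ etc.\ used above — i.e. confirming that $\langle\alpha^k\rangle$ and $\langle\rho_1\rangle,\langle\alpha\rho_1\rangle$ still interact the same way — which requires a careful monomial-basis bookkeeping in $\bk\sQ_A/I_A$ using the cancellation law; once that is in place the argument is a direct transcription of Lemma \ref{L2.2}. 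An alternative, perhaps cleaner route is to note that $A$ and its standard form $\overline{A}$ have the same ray-category and hence the same graphs for $P_x$ up to the twist, and to carry out the filtration argument in $\overrightarrow{A}$ itself, where standardness is irrelevant; I would pursue whichever of the two requires less casework.
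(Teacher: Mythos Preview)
Your approach is essentially the same as the paper's: observe that non-standardness forces the existence of a penny-farthing (so Lemma~\ref{L2.1} applies and $z_1=x$), argue that the passage from the standard to the non-standard relations does not alter the structure of $P_x$, and then quote the filtrations of Lemma~\ref{L2.2} verbatim. The paper makes the central point more crisply by invoking \cite[13.14, 13.17]{GR92} directly: the difference between $A$ and $\overline{A}$ is visible only in the projectives $P_{z_2},\ldots,P_{z_s}$, so $P_x$ has literally the same graph in all three cases, and no further bookkeeping is needed. Your hands-on verification via the type-(2) relation $\rho_s\rho_1=\rho_s\alpha\rho_1$ reaches the same conclusion but with more effort.

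Two small inaccuracies to clean up. First, in case~III the paper uses the \emph{same} filtration $P_x\supset\langle\alpha\rangle\supset\langle\alpha^2\rangle\supset\langle\alpha^3\rangle\supset 0$ as in I) and II); your parenthetical shortened version is not what was done (though it would also work, since $\langle\alpha^3\rangle\cong S_x$). Second, your blanket claim that ``every $M_i\neq P_x$ has all composition factors among $\{S_x,S_y\}$'' is only the mechanism in cases I) and II); in case~III there is no single $y$, and the finiteness of $\pd_\Lambda\langle\alpha\rangle$ comes instead from the exact sequence $0\to\langle\alpha\rho_1\rangle\to\langle\alpha\rangle\to\langle\alpha^2\rangle\to 0$ together with $\pd_\Lambda\langle\alpha\rho_1\rangle<\infty$, exactly as you wrote earlier in your proposal. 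Once you align those details, your argument coincides with the paper's.
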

\begin{proof}
If $A$ is non-standard, then $A$ is representation finite by \cite{B09}, $\mathrm{char} \bk=2$ and there is a penny-farthing in $\overrightarrow{A}$ by \cite[Theorem 13.17]{GR92}. Since Lemma \ref{L2.1} remains valid, the penny-farthing $(\alpha^2,\rho_1\ldots \rho_s)$, $\rho_i:z_i\to z_{i+1}$, $z_1=z_{s+1}=x$, is unique. By \cite[13.14, 13.17]{GR92} the difference between $A$ and $\overline{A}$ in the composition of the arrows shows up in the graphs of the projectives to $z_2,\ldots, z_s$ only. Thus the graph of $P_x$ remains the same in all three cases of the proof of Lemma \ref{L2.2} and the filtrations constructed there still do the job.
\end{proof}

\section{The proof for standard algebras without penny-farthings}\label{section_nopenny}
\subsection{Some preliminaries}
If there is no penny-farthing in $\overrightarrow{A}$, then $A=\overline{A}$ is standard by Gabriel, Roiter \cite[Theorem 13.17]{GR92} and Bongartz \cite[Theorem 2]{B09}. By a result of Liu, Morin \cite[Corollary 1.3]{LM04}, deduced from a proposition of Green, Solberg, Zacharia \cite{GSZ01}, a power of $\alpha$ is a summand of a polynomial relation in $I=I_{\Ld}$. Otherwise $\pd_{\Ld}S_x$ would be infinite contradicting the choice of $x$. Furthermore, $\alpha$ is a summand of a polynomial relation in $I_A$ by definition of $A$. But $I_A$ is generated by paths and differences of paths in $\sQ_A$. Hence we can assume without loss of generality that there is a relation $\alpha^t-\beta_1\beta_2\ldots \beta_r$ in $I_A$ for some $t\in\N$ and arrows $\beta_1,\beta_2,\ldots ,\beta_r$. Among all relations of this type we choose one with minimal $t$. Hence $(\alpha^t, \beta_1\beta_2\ldots \beta_r)$ is a contour in $\overrightarrow{A}$ with $t,r \geq 2$. Let $y=e(\beta_1)$ be the ending point of $\beta_1$ and $\tilde{\beta}=\beta_2\ldots \beta_r$.

By the structure theorem for non-deep contours in \cite[6.4]{BGRS85} the contour $(\alpha^t, \beta_1\beta_2\ldots \beta_r)$ is deep, i.e. we have $\alpha^{t+1}=0$ in $A$. Since $A$ is mild, the cardinality of the set $x^+$ of all arrows starting in $x$ is bounded by three. Before we consider the cases $|x^+|=2$ and $|x^+|=3$ separately we shall prove some useful general facts.

The following trivial fact about standard algebras will be essential hereafter.
\begin{lemma}\label{L1.9}
Let $A=\overline{A}$ be a standard $\bk$-algebra. Consider rays $v_i, w_j\in \overrightarrow{A}\setminus \{0\}$ for $i=1\ldots n$ and $j=1\ldots m$ such that $v_l\neq v_k$ and $w_l\neq w_k$ for $l\neq k$. If there are $\lambda_i,\mu_j\in\bk\setminus\{0\}$ such that $\sum_{i=1}^n\lambda_i v_i=\sum_{j=1}^m\mu_j w_j$, then $n=m$ and there exists a permutation $\pi\in S(n)$ such that $v_i=w_{\pi(i)}$ and $\lambda_i=\mu_{\pi(i)}$ for $i=1\ldots n$.
\end{lemma}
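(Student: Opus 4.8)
The plan is to use nothing beyond the definition of the standard form. Recall that $\overline{A}=\bigoplus_{x,y\in\sQ_A}\bk(\overrightarrow{A})(x,y)$ and that each linearized Hom-space $\bk(\overrightarrow{A})(x,y)$ is, by construction, the $\bk$-vector space having the \emph{nonzero} rays in $\overrightarrow{A}(x,y)$ as a basis (the zero ray being sent to $0$). Hence the set $B$ of all nonzero rays of $\overrightarrow{A}$ is a $\bk$-basis of $\overline{A}$, and since $A=\overline{A}$ by hypothesis, $B$ is a $\bk$-basis of $A$. This is the only place where standardness enters, and it is what makes the statement meaningful.

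With this in hand the argument is a coefficient comparison. The identity $\sum_{i=1}^n\lambda_i v_i=\sum_{j=1}^m\mu_j w_j$ is an equation in $A$ between two elements expanded in the basis $B$: on the left the $v_1,\dots,v_n\in B$ are pairwise distinct with all $\lambda_i\neq 0$, and symmetrically on the right. Rewriting it as $\sum_i\lambda_i v_i-\sum_j\mu_j w_j=0$ and invoking the linear independence of $B$, I would fix an index $i$ and observe that $v_i$ occurs on the left with total coefficient exactly $\lambda_i\neq 0$ (exactly once, since the $v$'s are distinct). For the overall coefficient of the basis vector $v_i$ to vanish, $v_i$ must appear among the $w_j$; as the $w_j$ are pairwise distinct there is a unique index $\pi(i)$ with $v_i=w_{\pi(i)}$, and then matching coefficients forces $\lambda_i=\mu_{\pi(i)}$. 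Running the same argument from the right shows every $w_j$ equals some $v_i$, so $\pi\colon\{1,\dots,n\}\to\{1,\dots,m\}$ is a bijection; in particular $n=m$ and $\pi\in S(n)$, which is the assertion.

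There is no real obstacle: the entire content of the lemma is that in a standard algebra the nonzero rays constitute a distinguished $\bk$-basis, so that comparing coefficients of rays is legitimate; everything else is bookkeeping. The only point deserving a word of care is to make explicit that the linearization identifies nonzero rays with basis vectors and kills the zero ray, since it is exactly the failure of this clean basis description in the non-standard case that prevents one from arguing this way there.
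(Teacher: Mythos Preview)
Your proposal is correct and takes essentially the same approach as the paper: the paper's proof is the single sentence ``Since the set of non-zero rays in $\overrightarrow{A}$ forms a basis of $A$, it is linearly independent and the claim follows,'' and your argument is simply a more detailed unpacking of this coefficient comparison.
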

\begin{proof}
Since the set of non-zero rays in $\overrightarrow{A}$ forms a basis of $A$, it is linearly independent and the claim follows.
\end{proof}

In what follows we denote by $\mathcal{L}$ the set of all long morphisms in $\overrightarrow{A}$. By $\mu$ we denote some long morphism $\nu\alpha^t \nu'$ which exists since $\alpha^t\neq 0$.

\begin{lemma}\label{L3.4}
Using the above notations we have: \[\langle \beta_1 \rangle \cap \langle \alpha\beta_1\rangle =0\]
\end{lemma}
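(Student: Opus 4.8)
The plan is to show that any element lying in both $\langle\beta_1\rangle$ and $\langle\alpha\beta_1\rangle$ must be zero by analysing the support of such an element. Recall that $P_x=e_x A$ has as a $\bk$-basis the set of nonzero rays $e_x A e_w$, and $\langle\beta_1\rangle$ is the submodule generated by the ray $\beta_1\in\overrightarrow{A}(x,y)$, so it is spanned by those nonzero rays of the form $\beta_1\eta$ with $\eta$ a ray starting at $y$; likewise $\langle\alpha\beta_1\rangle$ is spanned by the nonzero rays $\alpha\beta_1\eta$. Since $A$ is standard, Lemma~\ref{L1.9} lets me compare linear combinations of rays coefficient-by-coefficient, so it suffices to prove that no single nonzero ray can be written simultaneously as $\beta_1\eta$ and as $\alpha\beta_1\eta'$.

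First I would suppose, for contradiction, that there is a nonzero ray $w\in e_x A e_u$ with $w=\beta_1\eta=\alpha\beta_1\eta'$ for rays $\eta,\eta'$. The key structural input is the relation $\alpha^t-\beta_1\tilde\beta$ with $\tilde\beta=\beta_2\ldots\beta_r$, chosen with $t$ minimal, together with the fact (from the structure theorem for non-deep contours, \cite[6.4]{BGRS85}) that this contour is deep, i.e. $\alpha^{t+1}=0$ in $A$. I want to use the equality $\beta_1\eta=\alpha\beta_1\eta'$ to produce a relation of the forbidden shorter type, or to force $w=0$ directly. The natural move is to compose on the left with a suitable power of $\alpha$, or to exploit the cancellation law: from $\alpha\cdot(\beta_1\eta')$ appearing inside $w$ and $w=\beta_1\eta$, I expect to derive an identity between $\beta_1$-prefixed rays and $\alpha$-prefixed rays that, after substituting $\beta_1\tilde\beta$ for $\alpha^t$, collapses.

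More concretely, I would argue as follows. Since $w=\alpha\beta_1\eta'\neq 0$, the ray $\beta_1\eta'$ is nonzero, hence $\beta_1\eta'\in\langle\beta_1\rangle$ is one of the spanning rays; and $w=\beta_1\eta$ means $\eta=\alpha$ composed with... — here the point is that the two factorisations of $w$ through $\beta_1$ must agree by the cancellation law applied to the common left factor $\beta_1$, giving $\eta=\alpha\cdot(\beta_1\eta')$ up to the group action, which is impossible because $\eta$ is a ray out of $y$ while $\alpha\beta_1\eta'$ out of $y$ would require an oriented path $y\to y$ beginning with $\alpha$, i.e.\ would feed back through $x$ in a way incompatible with $\alpha$ being a loop at $x$ and $\beta_1:x\to y$. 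Equivalently: $w$ viewed as a path from $x$ ends at $u$; writing $w=\beta_1\eta$ puts the first arrow of $w$ equal to $\beta_1$, whereas $w=\alpha\beta_1\eta'$ puts the first arrow equal to $\alpha$; since $\alpha\neq\beta_1$ as arrows of $\sQ_A$ and the standard algebra $\overline A$ has rays represented by honest paths, one cannot have a single nonzero ray whose representatives start with two different arrows unless an interlacing identifies them — but $(\alpha^t,\beta_1\tilde\beta)$ is a contour, i.e.\ $\alpha^t$ and $\beta_1\tilde\beta$ are \emph{non-interlaced}, and by the cancellation law no shorter prefix relation exists either by minimality of $t$. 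Hence $w=0$.

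The main obstacle, and the place that needs care, is the last step: ruling out that some \emph{interlacing} or some long morphism identifies a $\beta_1$-prefixed ray with an $\alpha\beta_1$-prefixed ray. I would handle this by using minimality of $t$ in the chosen relation $\alpha^t-\beta_1\tilde\beta$ and the cancellation law to exclude relations of the form $\alpha^a=\beta_1\xi$ with $a<t$, and by invoking that we are in the \emph{standard, penny-farthing-free} case so that $\overrightarrow A$ has no penny-farthing and the only relations among short paths at $x$ are the ones coming from the chosen contour and its consequences; combined with $\alpha^{t+1}=0$, this forces $\eta$ to have length making $w$ land in $\alpha^{t+1}A=0$. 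A clean way to finish: from $\beta_1\eta=\alpha\beta_1\eta'$ multiply the relation $\alpha\beta_1\tilde\beta=\alpha\cdot\alpha^t=\alpha^{t+1}=0$ (using $\beta_1\tilde\beta=\alpha^t$) to see that $\alpha\beta_1$ kills $\tilde\beta$, so $\langle\alpha\beta_1\rangle$ is annihilated by $\tilde\beta$ on the right while any nonzero element of $\langle\beta_1\rangle\cap\langle\alpha\beta_1\rangle$ would, tracing through the factorisation, be forced to survive multiplication patterns that return to a power $\alpha^{\geq t+1}=0$, giving the contradiction and hence $\langle\beta_1\rangle\cap\langle\alpha\beta_1\rangle=0$.
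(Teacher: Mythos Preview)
Your reduction to a single ray equality $\beta_1 v=\alpha\beta_1 w\neq 0$ via Lemma~\ref{L1.9} is correct, but from that point on the argument does not go through. The ``first arrow'' reasoning is simply false in a ray-category: a nonzero ray is an equivalence class of paths, and different representatives can perfectly well begin with different arrows---indeed $\alpha^t=\beta_1\tilde\beta$ is already an example. So observing that one representative of $w$ starts with $\beta_1$ and another with $\alpha$ is not a contradiction. You acknowledge this as ``the main obstacle'', but your proposed fixes do not close it: minimality of $t$ only excludes relations $\alpha^a=\beta_1\xi$ with $a<t$, which is a different shape from $\beta_1 v=\alpha\beta_1 w$; and the ``clean finish'' via $\alpha\beta_1\tilde\beta=\alpha^{t+1}=0$ shows only that $\tilde\beta$ annihilates $\alpha\beta_1$ on the right, which says nothing about an arbitrary $w$. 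The sentence about ``multiplication patterns that return to a power $\alpha^{\geq t+1}$'' is not an argument.

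What is actually needed---and what you never invoke---is the \emph{mildness} of $A$. The paper's proof assumes $\beta_1 v=\alpha\beta_1 w\neq 0$ and builds from it a cleaving diagram of Euclidean type $\widetilde{A}_3$ with edges $\alpha^{t-1}$, $\beta_1 w$, $\tilde\beta$, $v$; verifying that this diagram is cleaving uses the cancellation law, the minimality of $t$, and the absence of penny-farthings (via the structure theorem for non-deep contours). Since none of these morphisms is long, the diagram survives in $\overrightarrow{A}/\mu$ for a long $\mu$, contradicting mildness. There is no purely combinatorial route here: without the representation-theoretic input, an equality $\beta_1 v=\alpha\beta_1 w$ is not inherently contradictory.
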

\begin{proof}
We assume to the contrary that $\langle \beta_1 \rangle \cap \langle \alpha\beta_1 \rangle \neq 0$. Then, by Lemma \ref{L1.9}, there are rays $v,w\in \overrightarrow{A}$  such that $\beta_1 v=\alpha\beta_1 w\neq 0$. We claim that
\[D:=\xymatrix@!=1pc{
\bullet \ar[d]_{\alpha^{t-1}} \ar[drr]^(.2){\beta_1 w} & & \bullet \ar[d]^v \ar[dll]_(.2){\tilde{\beta}} \\
\bullet & & \bullet }\] is a cleaving diagram in $\overrightarrow{A}$. It is of representation-infinite, Euclidean type $\widetilde{A}_3$. Since all morphisms occurring in $D$ are not long, the long morphism $\mu=\nu\alpha^t\nu'$ does not occur in $D$ and $D$ is still cleaving in $\overrightarrow{A}/\mu$ by \cite[Lemma 3]{B09}. Thus $\overrightarrow{A}/\mu$ is representation-infinite contradicting the mildness of $A$.

Now we show in detail, using \cite[Lemma 3 d)]{B09}, that $D$ is cleaving.
First of all we assume that there is a ray $\rho$ with $\rho\tilde{\beta}=\alpha^{t-1}$. Then we get $0\neq\alpha^t=\alpha\rho\tilde{\beta}=\beta_1\tilde{\beta}$, whence $\alpha\rho=\beta_1$ by the cancellation law. This contradicts the fact that $\beta_1$ is an arrow. In a similar way it can be shown that $\rho\alpha^{t-1}=\tilde{\beta},\ \rho v=\beta_1 w$ and $\rho\beta_1 w=v$ are impossible.\\
The following four cases are left to exclude.
\begin{enumerate}[i)]
\item $\alpha^{t-1}\rho=\beta_1 w$: Left multiplication with $\alpha$ gives us $\alpha^t\rho=\alpha\beta_1 w\neq 0$. Hence there is a non-deep contour $(\alpha^{t-1}\rho_1\ldots \rho_k,\beta_1 w_1\ldots w_l)$ in $\overrightarrow{A}$. Here $\rho=\rho_1\ldots\rho_k$ resp. $w=w_1\ldots w_l$ is a product of irreducible rays (arrows). Since the arrow $\beta_1$ is in the contour, the cycle $\beta_1 \tilde{\beta}$ and the loop $\alpha$ belong to the contour. Hence it can only be a penny-farthing by the structure theorem for non-deep contours \cite[6.4]{BGRS85}. But this case is excluded in the current section.
\item $\tilde{\beta}\rho=v$: We argue as before and deduce $\beta_1 \tilde{\beta}\rho=\beta_1 v=\alpha^t\rho=\alpha\beta_1 w\neq 0$. Hence there is a non-deep contour $(\alpha^{t-1}\rho_1\ldots \rho_k,\beta_1 w_1\ldots w_l)$ leading again to a contradiction.
\item $\beta_1 w\rho=\alpha^{t-1}$: Since $t-1<t$ we have a contradiction to the minimality of $t$.
\item $v\rho=\tilde{\beta}$: Then $\beta_1 v\rho=\beta_1 \tilde{\beta}=\alpha^t=\alpha\beta_1 v\rho\neq 0$. Using the cancellation law we get $\alpha^{t-1}=\beta_1 v\rho$ a contradiction as before.
\end{enumerate}
\end{proof}

\begin{lemma}\label{L3.5}
If $t\geq 3$ \textbf{and} $\mathcal{L}\nsubseteq\{\alpha^3,\alpha^2\beta_1 \}$, then $\alpha^2\beta_1 =0$.\\
\end{lemma}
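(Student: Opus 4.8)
The plan I would pursue is a proof by contradiction. Assume, keeping the standing hypotheses $t\geq 3$ and $\mathcal{L}\nsubseteq\{\alpha^3,\alpha^2\beta_1\}$, that $\alpha^2\beta_1\neq 0$. Then $\alpha^2,\alpha^3\neq 0$ because $t\geq 3$, $\alpha^{t+1}=0$ because the contour $(\alpha^t,\beta_1\tilde{\beta})$ is deep, $\alpha\beta_1\neq 0$, and $\beta_1\neq\alpha$ (otherwise repeated use of the cancellation law on $\alpha^t=\beta_1\tilde{\beta}$ would produce a relation of the required form of degree $<t$, against the minimality of $t$). Using the non-zero rays $\alpha^3,\alpha\beta_1,\alpha^2\beta_1$, I would form the diagram
\[
D:=\xymatrix@!=1.3pc{
& \bullet & \bullet & \\
\bullet \ar[r]^{\alpha} & \bullet \ar[u]^{\beta_1}\ar[r]^{\alpha} & \bullet \ar[u]^{\beta_1}\ar[r]^{\alpha} & \bullet
}
\]
in $\overrightarrow{A}$, whose bottom row is a chain $x_0\to x_1\to x_2\to x_3$ of three $\alpha$-arrows and whose two $\beta_1$-arrows point to fresh sinks above $x_1$ and $x_2$. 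The underlying graph of $D$ is the extended Dynkin diagram $\widetilde{D}_5$, so $D$ is representation-infinite, and every label of $D$ is one of the irreducible rays $\alpha$, $\beta_1$.

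The next step is to verify that $D$ is cleaving, following the pattern of the proof of Lemma \ref{L3.4} and the criterion \cite[Lemma 3 d)]{B09}. Condition a) is clear since every path of $D$ maps to one of $\alpha,\alpha^2,\alpha^3,\beta_1,\alpha\beta_1,\alpha^2\beta_1$, all non-zero. For condition b) and its dual the relevant factorisations to exclude are $\alpha=\beta_1\rho$ (impossible: $\alpha$ is irreducible and $\beta_1\neq\alpha$), $\beta_1=\alpha\rho$ (impossible: $\beta_1$ is irreducible), $\alpha^2=\beta_1\rho$ (impossible: such a $\rho$ is a product of arrows, so this is a relation $\alpha^2-\beta_1(\ldots)\in I_A$ of degree $2<t$, contradicting the minimality of $t$, and here $t\geq 3$ enters), and $\alpha\beta_1=\beta_1\rho$ (impossible: it would place $0\neq\alpha\beta_1$ into $\langle\beta_1\rangle\cap\langle\alpha\beta_1\rangle$, which vanishes by Lemma \ref{L3.4}); the remaining terminal-segment possibilities cause no trouble because any path of $D$ beginning with an $\alpha$-arrow $\eta$ factors through $\eta$ again inside $D$, and a source/target comparison disposes of the rest.

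Finally, a long morphism is non-irreducible and is killed on both sides by every non-isomorphism, so among the morphisms that occur in $D$, all of which lie in $\{\alpha,\alpha^2,\alpha^3,\beta_1,\alpha\beta_1,\alpha^2\beta_1\}$, only $\alpha^3$ and $\alpha^2\beta_1$ can possibly be long: $\alpha$ and $\beta_1$ are irreducible, $\alpha^2$ long would force $\alpha^3=0$ (contra $t\geq 3$), and $\alpha\beta_1$ long would force $\alpha^2\beta_1=0$ (contra our assumption). Since $\mathcal{L}\nsubseteq\{\alpha^3,\alpha^2\beta_1\}$ we may pick a long morphism $\eta\notin\{\alpha^3,\alpha^2\beta_1\}$; it does not occur in $D$, so $D$ stays cleaving in the proper quotient $\overrightarrow{A}/\eta$ by \cite[Lemma 3]{B09}, whence $\overrightarrow{A}/\eta$ is representation-infinite, contradicting the mildness of $\overrightarrow{A}$ (equivalently of $A$). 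This forces $\alpha^2\beta_1=0$. The main obstacle is to pin down a diagram $D$ narrow enough that every long morphism occurring in it already lies in $\{\alpha^3,\alpha^2\beta_1\}$---which is precisely why the hypothesis $\mathcal{L}\nsubseteq\{\alpha^3,\alpha^2\beta_1\}$ is needed---and then to grind through the cleaving conditions, which is elementary but tedious.
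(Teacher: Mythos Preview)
Your proof is correct and follows essentially the same route as the paper: the same $\widetilde{D}_5$ cleaving diagram built from the arrows $\alpha$ and $\beta_1$, the same two non-trivial cleaving checks ($\alpha^2=\beta_1\rho$ contradicts the minimality of $t\geq 3$, and $\alpha\beta_1=\beta_1\rho$ contradicts Lemma~\ref{L3.4}), and the same use of a long morphism $\eta\in\mathcal{L}\setminus\{\alpha^3,\alpha^2\beta_1\}$ to pass to a representation-infinite proper quotient. Your additional justification that among the morphisms appearing in $D$ only $\alpha^3$ and $\alpha^2\beta_1$ can be long is a welcome elaboration of a point the paper leaves implicit.
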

\begin{proof}
If $\alpha^2\beta_1 \neq0$, then
\[D:=\xymatrix@!=1pc{
\bullet \ar[r]^\alpha & \bullet \ar[d]^\alpha \ar[r]^{\beta_1}  & \bullet\\
\bullet & \bullet \ar[l]_\alpha \ar[r]^{\beta_1}  & \bullet
}\]
is a cleaving diagram of Euclidian type $\widetilde{D}_5$ in $\overrightarrow{A}$. It is cleaving since:
\begin{enumerate}[i)]
\item $\alpha^2=\beta_1\rho\neq 0$ contradicts the choice of $t\geq 3$.
\item $\alpha\beta_1=\beta_1\rho\neq 0$ contradicts Lemma \ref{L3.4}.
\end{enumerate}
  It is also cleaving in $\overrightarrow{A}/\eta$ for $\eta\in \mathcal{L}\setminus \{\alpha^3,\alpha^2\beta_1 \}\neq\emptyset$ contradicting the mildness of $A$.
\end{proof}

\begin{lemma}\label{L3.6}
If $\langle \alpha^2 \rangle \cap\langle \alpha\beta_1 \rangle =0=\langle \beta_1 \rangle \cap\langle \alpha\beta_1 \rangle$, then $\langle \alpha^2,\beta_1 \rangle \cap\langle \alpha\beta_1 \rangle =0$.
\end{lemma}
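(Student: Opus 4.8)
The plan is to use, in the spirit of Lemma~\ref{L1.9}, the fact that in the standard algebra $A=\overline{A}$ the non-zero rays of $\overrightarrow{A}$ form a $\bk$-basis $B$ of $A$, and to observe that each of the submodules $\langle\alpha^2\rangle$, $\langle\beta_1\rangle$, $\langle\alpha\beta_1\rangle$ and $\langle\alpha^2,\beta_1\rangle$ of $P_x=e_xA$ is spanned, as a $\bk$-subspace of $A$, by an explicit subset of $B$. Once this is in place the assertion is nothing but the distributive law in the Boolean lattice of subsets of $B$. Throughout, let $\bk S$ denote the $\bk$-linear span of a subset $S$ of a vector space.

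Concretely, I would first record that for any non-zero ray $w\in P_x$ the submodule $\langle w\rangle=wA$ equals $\bk R_w$, where
\[R_w:=\{\,wr\mid r\ \text{a ray of}\ \overrightarrow{A}\ \text{(possibly an identity) with}\ wr\neq 0\,\}\subseteq B,\]
because $A$ is spanned by its rays and, in a standard algebra, $wr$ is for every ray $r$ either $0$ or again a ray. Applying this to $w=\alpha^2$, $w=\beta_1$ and $w=\alpha\beta_1$ (if $\alpha\beta_1=0$ the lemma is trivial, so we may assume $\alpha\beta_1\neq 0$) gives $\langle\alpha^2\rangle=\bk R_{\alpha^2}$, $\langle\beta_1\rangle=\bk R_{\beta_1}$ and $\langle\alpha\beta_1\rangle=\bk R_{\alpha\beta_1}$; moreover $\langle\alpha^2,\beta_1\rangle=\langle\alpha^2\rangle+\langle\beta_1\rangle=\bk(R_{\alpha^2}\cup R_{\beta_1})$.

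Next I would invoke the elementary fact that for subsets $S,T$ of a $\bk$-basis one has $\bk S\cap\bk T=\bk(S\cap T)$ and $\bk S+\bk T=\bk(S\cup T)$. Using in addition the set-theoretic distributivity $R_{\alpha\beta_1}\cap(R_{\alpha^2}\cup R_{\beta_1})=(R_{\alpha\beta_1}\cap R_{\alpha^2})\cup(R_{\alpha\beta_1}\cap R_{\beta_1})$ one obtains
\[\langle\alpha^2,\beta_1\rangle\cap\langle\alpha\beta_1\rangle=\bigl(\langle\alpha^2\rangle\cap\langle\alpha\beta_1\rangle\bigr)+\bigl(\langle\beta_1\rangle\cap\langle\alpha\beta_1\rangle\bigr),\]
and the right-hand side vanishes by the two hypotheses.

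I do not anticipate a genuine obstacle here; the only step requiring a little care is the reduction to ray-bases, i.e. checking that $\langle w\rangle$ is really spanned by a subset of $B$, so that sums and intersections of these submodules may be computed combinatorially inside $B$. This is precisely where the standardness of $A$ enters (and it is available throughout this section, since there is no penny-farthing). Everything else is the distributive law for subsets of a set, with no case analysis involved.
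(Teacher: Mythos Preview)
Your argument is correct and is essentially the same as the paper's: both exploit that in the standard algebra the non-zero rays form a $\bk$-basis (the content of Lemma~\ref{L1.9}), so that the submodules $\langle\alpha^2\rangle$, $\langle\beta_1\rangle$, $\langle\alpha\beta_1\rangle$ are spanned by subsets of this basis and their intersections behave distributively. The paper just phrases this elementwise---taking a putative nonzero element $\alpha^2u+\beta_1v=\alpha\beta_1w$, reducing via Lemma~\ref{L1.9} to the case of single rays, and reading off the two contradictions---whereas you package the same observation once and for all as the Boolean distributive law on basis-subsets.
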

\begin{proof}
Let $\alpha^2 u+\beta_1 v=\alpha\beta_1 w\neq 0$ be an element in $\langle \alpha^2,\beta_1 \rangle \cap\langle \alpha\beta_1 \rangle $. By Lemma \ref{L1.9} we can assume that $u,v,w$ are rays and the following two cases might occur:
\begin{enumerate}[i)]
\item $\beta_1 v=\alpha\beta_1 w\neq 0$: This is a contradiction since $\langle \beta_1 \rangle \cap\langle \alpha\beta_1 \rangle =0$.
\item $\alpha^2u= \alpha\beta_1 w\neq 0$: This is impossible because $\langle \alpha^2\rangle \cap\langle \alpha\beta_1 \rangle =0$.
\end{enumerate}
\end{proof}

\subsection{The case $|x^+|=2$}
\begin{lemma}\label{L6.6}
If $x^+=\{\alpha,\beta_1\}$ and $\mathcal{L}\subseteq \{\alpha^3, \alpha^2\beta_1\}$, then there exists an $\alpha$-filtration $\mathcal{F}$ of $P_x$ having finite projective dimension.
\end{lemma}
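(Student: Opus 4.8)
The plan is to exhibit the $\alpha$-filtration
\[\mathcal F:\quad P_x\ \supset\ \rad P_x\ \supset\ \langle\alpha\rangle\ \supset\ \langle\alpha^2\rangle\ \supset\ \cdots\ \supset\ \langle\alpha^t\rangle\ \supset\ 0\]
and to show that every term of it other than $P_x$ has finite projective dimension over $\Ld$. That $\mathcal F$ is an $\alpha$-filtration is straightforward: since $x^+=\{\alpha,\beta_1\}$ we have $\rad P_x=\langle\alpha\rangle+\langle\beta_1\rangle$, and $\alpha\beta_1\in\alpha A=\langle\alpha\rangle$, so $\alpha\cdot\rad P_x=\langle\alpha^2\rangle+\langle\alpha\beta_1\rangle\subseteq\langle\alpha\rangle$; moreover $\alpha P_x=\langle\alpha\rangle$, $\alpha\langle\alpha^i\rangle=\langle\alpha^{i+1}\rangle$, and $\alpha^{t+1}=0$ because the contour $(\alpha^t,\beta_1\tilde\beta)$ is deep \cite[6.4]{BGRS85}. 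For $\rad P_x$ the sequence $0\to\rad P_x\to P_x\to S_x\to0$ together with $\pd_\Ld S_x<\infty$ already gives finiteness, so the whole point is to handle the submodules $\langle\alpha^i\rangle$.

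First I would exploit the hypothesis $\mathcal L\subseteq\{\alpha^3,\alpha^2\beta_1\}$ --- this is where it really bites --- together with the minimality of $t$, deepness, Lemmas \ref{L3.4}, \ref{L3.5} and \ref{L3.6}, and a few cleaving diagrams of Euclidean type handled exactly as in the proof of Lemma \ref{L3.5} (each time one deletes a long morphism $\eta=\nu\alpha^t\nu'$ missing from the diagram, keeps it cleaving in $\overrightarrow A/\eta$ by \cite[Lemma 3]{B09}, and contradicts the mildness of $A$). The goal of this step is to reduce the graph of $P_x$ to a short list of cases in which, apart from the ``$\alpha$-spine'' $e_x,\alpha,\alpha^2,\dots,\alpha^t$, the module $P_x$ is very small: $\tilde\beta$ is forced to be a single arrow $\beta_2\colon y\to x$ (so $r=2$ and the relation is $\alpha^t=\beta_1\beta_2$) with at most one arrow leaving $y:=e(\beta_1)$, and consequently every composition factor of every $\langle\alpha^i\rangle$ is $S_x$ or $S_y$. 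These cases are then organised according to whether $t=2$ or $t\ge3$ and whether $\alpha^2\beta_1=0$.

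Granting this reduction, $\pd_\Ld S_y<\infty$ is obtained as in the proof of Lemma \ref{L2.2}. Put $N:=\sum_{i\ge0}\langle\alpha^i\beta_1\rangle$; since $\alpha^t\beta_1=\beta_1\tilde\beta\beta_1\in\langle\beta_1\rangle$ this sum is finite, and the quotient $P_x/N$ has only copies of $S_x$ as composition factors (a basis being the images of $e_x,\alpha,\dots,\alpha^t$, on which $\alpha$ shifts and $\beta_1$ acts by zero), so $\pd_\Ld(P_x/N)<\infty$ and therefore $\pd_\Ld N<\infty$. Peeling $N$ apart with (iterated applications of) Lemma \ref{L3.4}, so that $N=\langle\beta_1\rangle\oplus\langle\alpha\beta_1\rangle\oplus\cdots\oplus\langle\alpha^{t-1}\beta_1\rangle$, each summand has finite projective dimension; and since $\alpha^{t-1}\beta_1\beta_2=\alpha^{t-1}\cdot\alpha^t=0$ --- and, in the degenerate sub-cases, $\alpha\beta_1\beta_2=\alpha^{t+1}=0$ does the same job for $\langle\alpha\beta_1\rangle$ or for $\langle\beta_1\rangle$ itself --- one of these summands is isomorphic to $S_y$, giving $\pd_\Ld S_y<\infty$. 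Now $\pd_\Ld S_x<\infty$ and $\pd_\Ld S_y<\infty$, and each $\langle\alpha^i\rangle$ has only $S_x,S_y$ as composition factors, so filtering by composition factors shows $\pd_\Ld\langle\alpha^i\rangle<\infty$ for $1\le i\le t$. Hence $\mathcal F$ has finite projective dimension, as required.

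The main obstacle is the structural reduction of the second paragraph: extracting from $\mathcal L\subseteq\{\alpha^3,\alpha^2\beta_1\}$ and the mildness of $A$ that $P_x$ away from its $\alpha$-spine reduces to the pair $\{x,y\}$ means finding the right representation-infinite cleaving diagrams in every configuration (in particular ruling out $r\ge3$ and a second arrow leaving $y$), and this has to be done separately for $t=2$, for $t\ge3$, and for $\alpha^2\beta_1=0$ versus $\alpha^2\beta_1\ne0$. Once that is in place, the projective-dimension bookkeeping of the third paragraph is routine.
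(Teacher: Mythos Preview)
Your filtration $P_x\supset\rad P_x\supset\langle\alpha\rangle\supset\cdots\supset\langle\alpha^t\rangle\supset0$ is essentially the one the paper uses, and the idea of reducing to $\pd_\Ld S_y<\infty$ is also the paper's. But your route to $\pd_\Ld S_y<\infty$ has a real gap and an unnecessary detour. The detour is the ``structural reduction'' to $r=2$ with $y^+=\{\beta_2\}$: you do not carry it out, and in fact the paper never needs it. The gap is the decomposition $N=\langle\beta_1\rangle\oplus\langle\alpha\beta_1\rangle\oplus\cdots\oplus\langle\alpha^{t-1}\beta_1\rangle$. Lemma~\ref{L3.4} gives only $\langle\beta_1\rangle\cap\langle\alpha\beta_1\rangle=0$; by cancellation you get the \emph{adjacent} intersections $\langle\alpha^i\beta_1\rangle\cap\langle\alpha^{i+1}\beta_1\rangle=0$, but not $\langle\beta_1\rangle\cap\langle\alpha^k\beta_1\rangle=0$ for $k\ge2$, which is what a direct sum requires. ``Iterated applications of Lemma~\ref{L3.4}'' does not produce this.

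The paper bypasses all of this by reading off the structure of $\langle\alpha\beta_1\rangle$ directly from the hypothesis. Split on $\alpha\beta_1$: if $\alpha\beta_1=0$ then every $\langle\alpha^k\rangle$ has only $S_x$ as composition factor and you are done. If $\alpha\beta_1\neq0$, then since every nonzero morphism extends to a long one and $\mathcal L\subseteq\{\alpha^3,\alpha^2\beta_1\}$, one gets immediately $t=3$, $\alpha^3\in\mathcal L$, and $\alpha\beta_1\rho=0$ for every non-identity $\rho$ (any long extension $\nu\alpha\beta_1\rho\nu'$ would have to equal $\alpha^3$ or $\alpha^2\beta_1$, and neither admits $\alpha\beta_1$ followed by a non-identity). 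Thus $\langle\alpha\beta_1\rangle=\bk\,\alpha\beta_1\cong S_y$ with no assumption on $r$ or on $y^+$. One then checks $\langle\alpha^2\rangle\cap\langle\alpha\beta_1\rangle=0$, applies Lemma~\ref{L3.6} to split $\langle\beta_1,\alpha^2,\alpha\beta_1\rangle=\langle\beta_1,\alpha^2\rangle\oplus\langle\alpha\beta_1\rangle$, and reads off $\pd_\Ld S_y<\infty$ from the short exact sequence $0\to\langle\beta_1,\alpha^2,\alpha\beta_1\rangle\to P_x\to M\to0$ with $M$ filtered by $S_x$. This avoids both the unproven reduction to $r=2$ and the dubious direct-sum decomposition of $N$.
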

\begin{proof}We treat two cases:
\begin{enumerate}[i)]
\item $\alpha\beta_1=0$: Then for $\langle\alpha^k\rangle$ with $k\geq 1$ only $S_x$ is possible as a composition factor; hence $\pd_{\Ld}\langle\alpha^k\rangle<\infty$. Thus $P_x\supset \langle\alpha\rangle\supset \langle\alpha^2\rangle\supset \langle\alpha^3\rangle\supset 0$ is the wanted $\alpha$-filtration.
\item $\alpha\beta_1\neq 0$: Since $\alpha^3$ and $\alpha^2\beta_1$ are the only morphisms in $\overrightarrow{A}$ which can be long, we have $t=3$, $0\neq\alpha^3\in\mathcal{L}$, $\langle\alpha\beta_1\rangle=\bk \alpha\beta_1\cong S_y$ and $\langle\alpha^2\beta_1\rangle\in\{\bk \alpha^2\beta_1,\ 0\}$.\\
    Now we show that $\langle \alpha^2\rangle \cap\langle \alpha\beta_1 \rangle =0$. If there are rays $v=v_1\ldots v_s,\ w \in \overrightarrow{A}$ with irreducible $v_i,\ i=1\ldots ,s$ such that $\alpha^2v=\alpha\beta_1 w\neq 0$, then $s>0$ because $s=0$ would contradict the irreducibility of $\alpha$. Therefore $v_1=\alpha$ or $v_1=\beta_1 $.
    \begin{itemize}
    \item[$\bullet$] If $v_1=\alpha$, then $v'=v_2\ldots v_s=id$ since $\alpha^3$ is long and $0\neq \alpha^2v=\alpha^3v'$. Hence $0\neq \alpha^3=\alpha^2v=\alpha\beta_1 w$ and $\alpha^2=\beta_1 w$ contradicts the minimality of $t$.
    \item[$\bullet$] If $v_1=\beta_1$, then $0\neq \alpha^2v=\alpha^2\beta_1 v'=\alpha\beta_1 w$; hence $0\neq \alpha\beta_1 v'=\beta_1 w\in \langle \beta_1 \rangle \cap\langle \alpha\beta_1 \rangle =0$.
    \end{itemize}
    Since $\langle\beta_1\rangle \cap\langle \alpha\beta_1 \rangle =0=\langle \alpha^2\rangle \cap\langle\alpha\beta_1\rangle$, we deduce $\langle \beta_1 ,\alpha^2, \alpha\beta_1 \rangle=\langle \beta_1 ,\alpha^2\rangle \oplus \langle \alpha\beta_1 \rangle$ by Lemma \ref{L3.6}. Therefore the graph of $P_x$ has the following shape:
    \[\xymatrix@!=1pc{
    e_x \ar[dr] \ar[d] & \\
    \alpha \ar[d] \ar[dr] & \langle \beta_1  \rangle\ar@{~>}[ddl] \\
    \alpha^2 \ar[d] \ar[dr]& \alpha\beta_1  \\
    \alpha^3 & \alpha^2\beta_1
    }.\]
    Here $\langle\beta_1\rangle$ stands for the graph of the submodule $\langle\beta_1\rangle$ which is not known explicitly.
    Consider the module $M$ defined by the following exact sequence: \[ 0\to \langle \beta_1 ,\alpha^2,\alpha\beta_1 \rangle \to P_x\to M\to 0\]
    Then $\pd_{\Ld} M<\infty$ since $M$ is filtered by $S_x$ and $\pd_{\Ld}(\langle \beta_1 ,\alpha^2\rangle\oplus\langle\alpha\beta_1 \rangle)=\pd_{\Ld}\langle \beta_1 ,\alpha^2,\alpha\beta_1 \rangle<\infty$. Thus $\pd_{\Ld} (\langle \alpha\beta_1 \rangle \cong S_y)$ is finite too and the wanted $\alpha$-filtration is $P_x \supset \langle\alpha\rangle \supset \langle\alpha^2\rangle \supset \langle\alpha^3\rangle\supset 0$.
\end{enumerate}
\end{proof}

\begin{lemma}\label{L3.8}
If $x^+=\{\alpha,\beta_1\}$, $t\geq 3$ \textbf{and} $\mathcal{L}\nsubseteq\{\alpha^3,\alpha^2\beta_1 \}$, then $\alpha^2\rho=0$ for all rays $\rho\notin \{e_x,\alpha,\ldots, \alpha^{t-2}\}$. Moreover, $\langle \alpha^2\rangle \cap \langle \alpha\beta_1 \rangle =0$.
\end{lemma}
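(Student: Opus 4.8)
The plan is to reduce the whole statement to the single identity $\alpha^2\beta_1=0$, which Lemma~\ref{L3.5} provides here---its hypotheses $t\geq 3$ and $\mathcal{L}\nsubseteq\{\alpha^3,\alpha^2\beta_1\}$ are exactly ours---together with the deepness $\alpha^{t+1}=0$ of the contour $(\alpha^t,\beta_1\ldots\beta_r)$ and the minimality of $t$ among relations of the form $\alpha^{t'}-\gamma_1\cdots\gamma_{r'}$ in $I_A$.

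I would first record the shape of the rays starting at $x$. Since the functor ${}^{\to}\colon\mathcal{P}\sQ_A\to\overrightarrow{A}$ is full, every nonzero ray $\rho$ from $x$ equals $\overrightarrow{p}$ for some path $p$; stripping off from $p$ its maximal initial power of $\alpha$ and using $x^+=\{\alpha,\beta_1\}$, the remainder is either trivial or begins with $\beta_1$, so $\rho=\alpha^{j}$ or $\rho=\alpha^{j}\beta_1\tau$ for some $j\geq 0$ and some ray $\tau$. Now fix a ray $\rho\notin\{e_x,\alpha,\ldots,\alpha^{t-2}\}$; we may assume $\rho\neq 0$. If $\rho=\alpha^{j}\beta_1\tau$, then by Lemma~\ref{L3.5}, $\alpha^2\rho=\alpha^{j}(\alpha^2\beta_1)\tau=0$. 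If $\rho=\alpha^{j}$, then $\rho\notin\{e_x,\ldots,\alpha^{t-2}\}$ forces $j\geq t-1$, so $\alpha^2\rho=\alpha^{j+2}=0$ because $j+2\geq t+1$ and $\alpha^{t+1}=0$. This proves the first assertion, and shows in particular that $\langle\alpha^2\rangle$ is the $\bk$-span of $\alpha^2,\alpha^3,\ldots,\alpha^t$.

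For the second assertion I would assume $\langle\alpha^2\rangle\cap\langle\alpha\beta_1\rangle\neq 0$ and look for a contradiction. By Lemma~\ref{L1.9}, a nonzero element of the intersection forces a ray from the spanning set of $\langle\alpha^2\rangle$ to coincide with one from the spanning set of $\langle\alpha\beta_1\rangle$; hence $\alpha^m=\alpha\beta_1\sigma\neq 0$ for some $m$ with $2\leq m\leq t$ and some ray $\sigma$ starting at $y$. Cancelling $\alpha$ (cancellation law) gives $\alpha^{m-1}=\beta_1\sigma\neq 0$. Writing $\sigma=\overrightarrow{p}$ for a path $p$ from $y$ and using that $A=\overline{A}$ is standard, this identity of rays means $\alpha^{m-1}-\beta_1 p\in I_A$. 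Since $\alpha^{m-1}$ and $\beta_1 p$ are distinct paths---they have different first arrows, $\alpha\neq\beta_1$---and every element of $I_A$ is a linear combination of paths of length $\geq 2$, both $\alpha^{m-1}$ and $\beta_1 p$ must have length $\geq 2$; that is, $m\geq 3$ and $p$ is nontrivial. But then $\alpha^{m-1}-\beta_1 p$ is a relation of the distinguished form $\alpha^{t'}-\gamma_1\cdots\gamma_{r'}$ in $I_A$ with $t'=m-1\leq t-1<t$, contradicting the minimality of $t$. Hence $\langle\alpha^2\rangle\cap\langle\alpha\beta_1\rangle=0$.

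The genuinely delicate step is the last one: one must make sure that the derived element $\alpha^{m-1}-\beta_1 p$ really is a relation of the distinguished type, which is where the standardness of $A$ and the admissibility of $I_A$ (no summands of length below $2$) enter, forcing both $m\geq 3$ and $p\neq e_y$; only then can the minimality of $t$ be invoked. Everything else---the description of the rays at $x$ and the two one-line computations of $\alpha^2\rho$---is routine once $\alpha^2\beta_1=0$ is in hand.
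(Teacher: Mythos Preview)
Your proof is correct and follows essentially the same route as the paper's. Both arguments reduce the first assertion to Lemma~\ref{L3.5} together with $\alpha^{t+1}=0$, by factoring a ray $\rho$ from $x$ as either a pure power of $\alpha$ or as $\alpha^{j}\beta_1\tau$; and both deduce the second assertion by cancelling $\alpha$ from an equality $\alpha^{m}=\alpha\beta_1\sigma$ to reach $\alpha^{m-1}=\beta_1\sigma$ with $m-1<t$. The only difference is cosmetic: the paper simply says this last equality ``contradicts the minimality of $t$'' without further comment, whereas you unpack the edge cases $m-1=1$ and $\sigma=e_y$ via the admissibility of $I_A$. Those cases are in fact already excluded by the irreducibility of $\alpha$ and $\beta_1$, so your extra care is not strictly needed, but it is not wrong either.
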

\begin{proof}
Let $\rho\in \overrightarrow{A}$ with $\alpha^2\rho\neq 0$ be written as a composition of irreducible rays $\rho=\rho_1\ldots\rho_s$. Then the following two cases are possible:
\begin{enumerate}[i)]
\item $\rho=\alpha^s$: Since $0\neq \alpha^2\rho=\alpha^{2+s}$ and $\alpha^{t+1}=0$ we have $s\leq t-2$ and $\rho=\alpha^s\in \{e_x,\alpha,\ldots, \alpha^{t-2}\}$.
\item There exists a minimal $1\leq i\leq s$ such that $\rho_{i}\neq \alpha$. Since $x^+=\{\alpha,\beta_1\}$, we have $\rho_i=\beta_1$ and $0\neq\alpha^2\rho=\alpha^{2+i-1}\beta_1\rho_{i+1}\ldots\rho_s=0$ by Lemma \ref{L3.5}.
\end{enumerate}
If $0\neq\alpha^2 v=\alpha\beta_1 w$, then $v=\alpha^s$ with $0\leq s\leq t-2$. Hence $0=\alpha^2 v=\alpha^{s+2}=\alpha\beta_1 w$ and $\alpha^{s+1}=\beta_1 w$ by cancellation law. This contradicts the minimality of $t$.
\end{proof}

\begin{cor}\label{C3.9}
If $x^+=\{\alpha,\beta_1\}$, $t\geq 3$ \textbf{and} $\mathcal{L}\nsubseteq\{\alpha^3,\alpha^2\beta_1 \}$, then $\langle \alpha^2,\beta_1 \rangle \cap\langle \alpha\beta_1 \rangle =0$.
\end{cor}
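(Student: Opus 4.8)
The plan is to obtain the statement with no new computation, purely by assembling the three immediately preceding results. The intersection $\langle\alpha^2,\beta_1\rangle\cap\langle\alpha\beta_1\rangle$ is exactly the one whose vanishing is the conclusion of Lemma \ref{L3.6}, so the only thing to do is to check that the two hypotheses of that lemma are available under the hypotheses of the corollary, namely that $\langle\alpha^2\rangle\cap\langle\alpha\beta_1\rangle=0$ and $\langle\beta_1\rangle\cap\langle\alpha\beta_1\rangle=0$.

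First I would note that $\langle\beta_1\rangle\cap\langle\alpha\beta_1\rangle=0$ is precisely Lemma \ref{L3.4}, which is already established under the standing assumptions of this section (standard algebra, no penny-farthing in $\overrightarrow{A}$) and needs nothing extra. Then I would invoke the ``moreover'' part of Lemma \ref{L3.8}, whose hypotheses $x^+=\{\alpha,\beta_1\}$, $t\geq 3$ and $\mathcal{L}\nsubseteq\{\alpha^3,\alpha^2\beta_1\}$ are exactly those of the corollary; that part gives $\langle\alpha^2\rangle\cap\langle\alpha\beta_1\rangle=0$. (Internally, this is where the condition $\mathcal{L}\nsubseteq\{\alpha^3,\alpha^2\beta_1\}$ does its work, through Lemma \ref{L3.5}, which Lemma \ref{L3.8} relies on; but at the level of this corollary it is a black box.) With both hypotheses of Lemma \ref{L3.6} in hand, that lemma yields $\langle\alpha^2,\beta_1\rangle\cap\langle\alpha\beta_1\rangle=0$ directly.

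I do not expect any genuine obstacle here: the corollary is a bookkeeping consequence of Lemmas \ref{L3.4}, \ref{L3.8} and \ref{L3.6}, and the only point worth a moment's attention is that the three hypotheses listed in the corollary are literally the hypotheses needed to trigger Lemma \ref{L3.8} (Lemma \ref{L3.4} requiring nothing beyond the section's standing assumptions, and Lemma \ref{L3.6} being purely formal). The substantive content has all been discharged earlier.
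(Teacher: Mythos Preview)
Your proposal is correct and matches the paper's own proof exactly: the paper simply states that the claim is trivial using Lemmas \ref{L3.4}, \ref{L3.6} and \ref{L3.8}, and your argument spells out precisely how these three lemmas combine.
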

\begin{proof}
The claim is trivial using Lemmas \ref{L3.4}, \ref{L3.6} and \ref{L3.8}.
\end{proof}

\begin{prop}\label{P3.10}
If $x^+=\{\alpha,\beta_1 \}$, then there exists an $\alpha$-filtration $\mathcal{F}$ of $P_x$ having finite projective dimension.
\end{prop}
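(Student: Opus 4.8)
The plan is to combine Lemma~\ref{L6.6} with the structural information assembled in Lemmas~\ref{L3.4}, \ref{L3.5} and \ref{L3.8} and Corollary~\ref{C3.9}. If $\mathcal{L}\subseteq\{\alpha^3,\alpha^2\beta_1\}$, then Lemma~\ref{L6.6} already yields the desired $\alpha$-filtration, so I assume from now on that $\mathcal{L}\nsubseteq\{\alpha^3,\alpha^2\beta_1\}$. By Lemma~\ref{L3.5} this leaves only the possibilities $t=2$ and the case $t\geq3$ with $\alpha^2\beta_1=0$; recall throughout that $t\geq2$ and $\alpha^{t+1}=0$. In both cases the candidate will be the filtration of $P_x$ by powers of the loop,
\[\mathcal{F}:\qquad P_x\supset\langle\alpha\rangle\supset\langle\alpha^2\rangle\supset\cdots\supset\langle\alpha^t\rangle\supset 0 .\]
This is an $\alpha$-filtration because $\alpha\langle\alpha^i\rangle=\langle\alpha^{i+1}\rangle$ for $i<t$ and $\alpha\langle\alpha^t\rangle=\langle\alpha^{t+1}\rangle=0$, and it is strictly decreasing because $\alpha^t\neq0$. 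Hence everything comes down to establishing $\pd_{\Ld}\langle\alpha^i\rangle<\infty$ for $1\leq i\leq t$.

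Consider first $t\geq3$, so $\alpha^2\beta_1=0$. Then $\alpha^i\beta_1=0$ for $i\geq2$, so for such $i$ the module $\langle\alpha^i\rangle$ is uniserial with every composition factor isomorphic to $S_x$, and $\pd_{\Ld}\langle\alpha^i\rangle<\infty$ follows from $\pd_{\Ld}S_x<\infty$ by induction along $0\to\langle\alpha^{i+1}\rangle\to\langle\alpha^i\rangle\to S_x\to0$. For $\langle\alpha\rangle$ I would follow the pattern of Lemma~\ref{L6.6} and of the introductory examples: Lemma~\ref{L3.8} and Corollary~\ref{C3.9} (together with $\alpha^2\beta_1=0$) determine the top layers of the graph of $P_x$, so that $\langle\beta_1,\alpha^2,\alpha\beta_1\rangle$ splits as $\langle\beta_1,\alpha^2\rangle\oplus\langle\alpha\beta_1\rangle$ with $\langle\alpha\beta_1\rangle\cong S_y$, while $P_x/\langle\beta_1,\alpha^2,\alpha\beta_1\rangle$ is filtered by $S_x$. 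From $\pd_{\Ld}S_x<\infty$ the short exact sequence $0\to\langle\beta_1,\alpha^2,\alpha\beta_1\rangle\to P_x\to P_x/\langle\beta_1,\alpha^2,\alpha\beta_1\rangle\to0$ then gives $\pd_{\Ld}\langle\alpha\beta_1\rangle<\infty$, hence $\pd_{\Ld}S_y<\infty$; since $\langle\alpha\rangle$ has all composition factors among $S_x$ and $S_y$, we conclude $\pd_{\Ld}\langle\alpha\rangle<\infty$.

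In the case $t=2$ the relation reads $\alpha^2=\beta_1\tilde{\beta}$ and $\alpha^3=0$, so $\mathcal{F}$ shortens to $P_x\supset\langle\alpha\rangle\supset\langle\alpha^2\rangle\supset0$ and only $\langle\alpha^2\rangle$ and $\langle\alpha\rangle$ remain to be handled. Here I would argue analogously, now using Lemma~\ref{L3.4}, i.e. $\langle\beta_1\rangle\cap\langle\alpha\beta_1\rangle=0$, in place of Corollary~\ref{C3.9}: peeling an $S_x$-filtered quotient off $P_x$ exhibits a submodule containing $\langle\beta_1\rangle\oplus\langle\alpha\beta_1\rangle$ of finite projective dimension, which yields $\pd_{\Ld}\langle\alpha\beta_1\rangle<\infty$ and then $\pd_{\Ld}S_y<\infty$; since $\langle\alpha^2\rangle$ and $\langle\alpha\rangle$ are again filtered by $S_x$ and $S_y$, the filtration has finite projective dimension.

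The step I expect to be the main obstacle is the precise determination of the relevant part of the graph of $P_x$ in each subcase --- equivalently, of submodule intersections such as $\langle\alpha\rangle\cap\langle\beta_1\rangle$ and $\langle\alpha^2\rangle\cap\langle\alpha\beta_1\rangle$ --- that is needed to split off the auxiliary direct summands and to guarantee that only $S_x$ and $S_y$ occur as composition factors along $\mathcal{F}$. This is exactly what the cleaving-diagram constructions behind Lemmas~\ref{L3.4}, \ref{L3.5} and \ref{L3.8} and Corollary~\ref{C3.9} are designed to control; once the graph is known, the finiteness statements reduce to repeated use of the hypothesis $\pd_{\Ld}S_x<\infty$ along short exact sequences whose other terms are filtered by $S_x$ and $S_y$.
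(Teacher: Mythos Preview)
Your overall plan follows the paper: reduce via Lemma~\ref{L6.6} and then split into $t=2$ and $t\geq3$. However, in both subcases you rely on the claims ``$\langle\alpha\beta_1\rangle\cong S_y$'' and ``$\langle\alpha\rangle$, $\langle\alpha^2\rangle$ are filtered by $S_x$ and $S_y$'', and neither is justified. Nothing in Lemmas~\ref{L3.4}, \ref{L3.5}, \ref{L3.8} forces $\alpha\beta_1\rho=0$ for arrows $\rho$ leaving $y$, so $\langle\alpha\beta_1\rangle$ can have composition factors at points other than $x,y$; consequently the inference $\pd_{\Ld}S_y<\infty$ and the composition-factor argument for $\langle\alpha\rangle$ both collapse.

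For $t\geq3$ this is easily repaired and your filtration still works: you legitimately obtain $\pd_{\Ld}\langle\alpha\beta_1\rangle<\infty$ from the splitting $\langle\beta_1,\alpha^2,\alpha\beta_1\rangle=\langle\beta_1,\alpha^2\rangle\oplus\langle\alpha\beta_1\rangle$ (Corollary~\ref{C3.9}), and Lemma~\ref{L3.8} gives both $\pd_{\Ld}\langle\alpha^k\rangle<\infty$ for $k\geq2$ and $\langle\alpha^2\rangle\cap\langle\alpha\beta_1\rangle=0$; then $0\to\langle\alpha^2\rangle\oplus\langle\alpha\beta_1\rangle\to\langle\alpha\rangle\to S_x\to0$ yields $\pd_{\Ld}\langle\alpha\rangle<\infty$ directly, without ever mentioning $S_y$. (The paper uses $\langle\alpha,\beta_1\rangle$ in place of your $\langle\alpha\rangle$, which is the same idea.)

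For $t=2$ the gap is genuine and your filtration $P_x\supset\langle\alpha\rangle\supset\langle\alpha^2\rangle\supset0$ cannot be completed this way. Lemma~\ref{L3.8} requires $t\geq3$, so for $t=2$ one may have $\alpha^2\beta_1\neq0$ and then $\langle\alpha^2\rangle\supset\langle\alpha^2\beta_1\rangle$ picks up composition factors at arbitrary points reachable from $y$; no tool in the paper yields $\pd_{\Ld}\langle\alpha^2\rangle<\infty$. The paper avoids $\langle\alpha^2\rangle$ altogether: from $\langle\beta_1,\alpha\beta_1\rangle=\langle\beta_1\rangle\oplus\langle\alpha\beta_1\rangle$ (Lemma~\ref{L3.4}) and the $S_x$-filtered quotient $P_x/\langle\beta_1,\alpha\beta_1\rangle$ one gets $\pd_{\Ld}\langle\beta_1\rangle,\pd_{\Ld}\langle\alpha\beta_1\rangle<\infty$, sets $K:=\ker\bigl(\lambda_\alpha:\langle\beta_1\rangle\to\langle\alpha\beta_1\rangle\bigr)$, and takes the $\alpha$-filtration
\[
P_x\supset\langle\alpha,\beta_1\rangle\supset\langle\beta_1\rangle\oplus\langle\alpha\beta_1\rangle\supset\langle\alpha\beta_1\rangle\oplus K\supset K\supset 0,
\]
whose terms all have finite projective dimension by construction. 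This is the missing idea in your $t=2$ case.
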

\begin{proof} If $\mathcal{L}\subseteq\{\alpha^3,\alpha^2\beta_1 \}$, then the claim is the statement of Lemma \ref{L6.6}. If $\mathcal{L}\nsubseteq\{\alpha^3,\alpha^2\beta_1 \}$, then we consider the value of $t$:
\begin{enumerate}[i)]
\item $t=2$: Then the graph of $P_x$ has the following shape:
\[\xymatrix@!=1pc{
e_x \ar[dr] \ar[d] & \\
\alpha \ar[d] \ar[dr] & \langle \beta_1 \rangle  \ar[dl] \\
\alpha^2 \ar[dr]& \langle \alpha\beta_1 \rangle  \\
  & \langle \alpha^2\beta_1 \rangle
}.\]
Let a subquotient $M$ of $P_x$ be defined by the following exact sequence: \[0\to \langle \beta_1 ,\alpha\beta_1 \rangle \to P_x\to M\to 0\] Then
$M$ and $\langle \beta_1 ,\alpha\beta_1 \rangle $ have finite projective dimension in $\rmod\Ld$. By Lemma \ref{L3.4} we have  $\langle \beta_1 ,\alpha\beta_1 \rangle =\langle \beta_1 \rangle \oplus \langle \alpha\beta_1 \rangle $; hence $\pd_{\Ld} \langle \beta_1 \rangle$ and $\pd_{\Ld} \langle \alpha\beta_1 \rangle$ are both finite.\\
Let $K$ be the kernel of the epimorphism $\lambda_\alpha: \langle \beta_1 \rangle \to\langle \alpha\beta_1 \rangle ,\ \lambda_\alpha(\rho)=\alpha\rho$. Then $\pd_{\Ld} K< \infty$ and for the $\alpha$-filtration $\mathcal{F}$ we take the following: $P_x\supset \langle \alpha,\beta_1 \rangle \supset \langle \beta_1 \rangle \oplus \langle \alpha\beta_1 \rangle \supset \langle \alpha\beta_1 \rangle \oplus K\supset K\supset 0$.
\item $t\geq 3$: Consider the following exact sequences:
\[ 0\to \langle \alpha,\beta_1 \rangle \to P_x\to S_x\to 0\]
\[ 0\to \langle \alpha^2,\beta_1 ,\alpha\beta_1 \rangle \to \langle \alpha,\beta_1 \rangle \to S_x\to 0 \]
Hence $\pd_{\Ld} \langle \alpha,\beta_1 \rangle $ and $\pd_{\Ld} \langle \alpha^2,\beta_1 ,\alpha\beta_1 \rangle $ are finite. By Corollary \ref{C3.9} $\langle \alpha^2,\beta_1 ,\alpha\beta_1 \rangle =\langle \alpha^2,\beta_1 \rangle \oplus \langle \alpha\beta_1 \rangle $, that means $\pd_{\Ld} \langle \alpha\beta_1 \rangle $ is finite too. With Lemma \ref{L3.8} it is easily seen that for $2\leq k\leq t$ the module $\langle \alpha^k\rangle $ is a uniserial module with $S_x$ as the only composition factor. Hence $\pd_{\Ld} \langle \alpha^k\rangle $ is finite for $2\leq k\leq t$. Thereby we have the wanted $\alpha$-filtration \[P_x\supset \langle \alpha,\beta_1 \rangle \supset \langle \alpha^2\rangle \oplus \langle \alpha\beta_1 \rangle \supset \langle \alpha^3\rangle \supset \langle \alpha^4\rangle \supset\ldots \supset \langle \alpha^t\rangle \supset 0.\]
\end{enumerate}
\end{proof}

\subsection{The case $|x^+|=3$}
With previous notations $x^+=\{\alpha,\beta_1 ,\gamma \},\ (\alpha^t, \beta_1 \beta_2\ldots \beta_r)$ is a contour in $\overrightarrow{A}$, $t\geq 2$, $\alpha^{t+1}=0$, $\tilde{\beta}:=\beta_2\ldots \beta_r$ and $\mu=\nu\alpha^t \nu'$ is a long morphism in $\overrightarrow{A}$.

The $\alpha$-filtrations will be constructed depending on the set $\mathcal{L}$ of long morphisms in $\overrightarrow{A}$. The case $\mathcal{L}\subseteq\{ \alpha^2, \alpha\beta_1 , \alpha\gamma \}$ is treated in Lemma \ref{L3.14}, the case $\mathcal{L}\subseteq\{ \alpha^t, \alpha^2\beta_1\}$ in \ref{L3.16} and the remaining case in \ref{P3.20}.

But first, we derive some technical results.

\begin{lemma}\label{L3.11} If $r=2$ and $\delta :z' \to z$ is an arrow in $\sQ_A$ ending in $z=e(\gamma)$, then $\delta=\gamma$.
\end{lemma}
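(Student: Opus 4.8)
The plan is to suppose for contradiction that $\delta\neq\gamma$, so that $\delta$ is a second arrow ending in $z=e(\gamma)$ and distinct from $\gamma$. Since we are in the case $|x^+|=3$ with $x^+=\{\alpha,\beta_1,\gamma\}$ and we are assuming $r=2$ (so the contour is $(\alpha^t,\beta_1\beta_2)$ with $e(\beta_1)=y$, $e(\beta_2)=x$, i.e. $\beta_1:x\to y$ and $\beta_2:y\to x$), I first want to pin down the local shape of $\sQ_A$ near $x$, $y$, $z$ and $z'$. The arrow $\gamma$ goes $x\to z$; combining this with the hypothetical arrow $\delta:z'\to z$ we obtain a configuration with an arrow into $z$ from $z'$ together with the arrow $\gamma:x\to z$. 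The point $z'$ is then a point adjacent to $z$, which in turn is in the support of $P_x$ (since $\gamma:x\to z$), so $z'$ lies in the neighborhood $A=\Ld(x)$ by its very definition (either $z'$ is already in the support of $P_x$, or we are in one of the extra configurations allowed in the definition of $\Ld(x)$). Thus all of $x,y,z,z',\delta$ live inside the mild algebra $A$ and we may work entirely in $\overrightarrow{A}$.

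Next I would assemble a cleaving diagram out of the irreducible rays $\alpha,\beta_1,\gamma,\delta$ (and possibly $\alpha^{t-1}$ or $\tilde\beta=\beta_2$) witnessing a representation-infinite type — a $\widetilde{D}$ or $\widetilde{E}$ quiver — exactly in the style of the proof of Lemma \ref{L2.1}(c) and Lemmas \ref{L3.4}, \ref{L3.5}. The natural candidate is a diagram with a central vertex $x$ having three outgoing irreducible rays $\alpha$, $\beta_1$, $\gamma$, together with the ray $\delta$ attached at the endpoint of $\gamma$, giving an extended Dynkin shape; the loop relation $\alpha^{t+1}=0$ supplies a long morphism $\mu=\nu\alpha^t\nu'$ that does not occur in the diagram, so by \cite[Lemma 3]{B09} the diagram remains cleaving in $\overrightarrow{A}/\mu$, and then $\overrightarrow{A}/\mu$ — hence $A$ — is representation-infinite, contradicting the mildness of $A=\Ld(x)$.

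The main obstacle, and the part requiring genuine care rather than routine bookkeeping, is verifying condition b) in the definition of a cleaving functor: one must check that no composite of the chosen irreducible rays can factor unexpectedly through another of them — i.e., rule out all the "accidental" relations of the form (ray)$\cdot\gamma=\delta\cdot$(ray), $\alpha\cdot$(ray)$=\beta_1\cdot$(ray), etc. As in Lemma \ref{L3.4}, each such putative factorization has to be excluded using the cancellation law in $\overrightarrow{A}$, the fact that $\alpha,\beta_1,\gamma,\delta$ are arrows (hence irreducible, so cannot be proper composites), the minimality of $t$ in the relation $\alpha^t-\beta_1\tilde\beta$, and Lemma \ref{L3.4} ($\langle\beta_1\rangle\cap\langle\alpha\beta_1\rangle=0$). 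A secondary subtlety is that $\delta=\gamma$ might fail only because $\gamma$ and $\delta$ have a \emph{different} source but the \emph{same} composite with some subsequent ray; so one must also use that the neighborhood condition forbids too many arrows at the relevant points (there can be "only one arrow ending in $z$" in the offending configurations, exactly as invoked in the cases a), b), c) of Lemma \ref{L2.1}), which either directly yields the conclusion or reduces to the cleaving-diagram contradiction above. Once every such factorization is excluded, the diagram is cleaving of representation-infinite type, the contradiction with mildness follows, and therefore $\delta=\gamma$.
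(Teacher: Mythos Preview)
Your plan has a genuine gap. The ``natural candidate'' you describe --- a central vertex $x$ with the three outgoing arrows $\alpha,\beta_1,\gamma$ and the extra arrow $\delta$ attached at the endpoint of $\gamma$ --- has underlying graph $D_5$, which is ordinary Dynkin and hence representation \emph{finite}. It is not an extended Dynkin diagram, so no contradiction with mildness follows. To enlarge it to something of type $\widetilde D$ or $\widetilde E$ you would need a further arrow (for instance $\beta_2:y\to x$ as an incoming arrow at the centre, or a second incoming arrow at $y$), but then the cleaving condition a) forces the composites through the centre to be nonzero --- e.g.\ $\beta_2\gamma\neq 0$ --- and nothing in the hypotheses guarantees this. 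In short, from the data $\alpha,\beta_1,\beta_2,\gamma,\delta$ alone there is no evident cleaving diagram of representation-infinite type, and the ``main obstacle'' is not the verification of factorizations but the very existence of a suitable diagram.

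The paper's argument is of a different nature and does not try to contradict mildness directly from $\delta\neq\gamma$. Instead it uses the hypothetical $\delta$ only as one of the three length-two arms needed for a $\widetilde E_6$ diagram, the other two arms being $(\alpha,\beta_2)$ and $(\beta_1,\varepsilon)$; this rules out any arrow $\varepsilon\neq\beta_1$ into $y$. A similar $\widetilde D_5$ argument rules out any arrow $\neq\alpha,\beta_2$ into $x$. These two exclusions make the full subcategory $B$ of $\Lambda$ on $\{x,y\}$ \emph{convex}, so that $\pd_B S_x<\infty$. But in $B$ one has $x^{+}=\{\alpha,\beta_1\}$, and the already-proved two-arrow case (Proposition~\ref{P3.10}) together with Proposition~\ref{lenzingsresult} then forces $\alpha$ not to be a loop --- the desired contradiction. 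The reduction to a convex two-point subalgebra and the appeal to the $|x^{+}|=2$ case are the essential ideas you are missing.
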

\begin{proof}
Assume to the contrary that $\gamma\neq\delta :z' \to z$, then there is no arrow $\beta_1\neq \varepsilon :y'\to y$ in $\sQ_{\Ld}$. If there is such an arrow, then by the definition of a neighborhood $\varepsilon$ belongs to $\sQ_A$. This arrow induces an irreducible ray $\beta_1\neq\varepsilon :y' \to y$ in $\overrightarrow{A}$ and
\[D:=\xymatrix@!=1pc{
\bullet \ar[r]^\delta & \bullet & \bullet \ar[d]^\alpha \ar[l]_\gamma  \ar[r]^{\beta_1}  & \bullet & \bullet \ar[l]_{\varepsilon}\\
& & \bullet & & \\
& & \bullet \ar[u]_{\beta_2} & &
}\] is a cleaving diagram in $\overrightarrow{A}/\mu$ of Euclidian type $\widetilde{E}_6$.\\
In a similar way an arrow $\alpha,\beta_2\neq \varepsilon:x'\to x$ in $\sQ_{\Ld}$ leads to a cleaving diagram of type $\widetilde{D}_5$ in $\overrightarrow{A}/\mu$.
Hence the full subcategory $B$ of $\Ld$ supported by the points $x,\ y$ is a convex subcategory of $\Ld$. Therefore the projective dimensions of $S_x$ is finite in $\rmod B$ since it is finite in $\rmod\Ld$. But in $B$ we have $x^+=\{\alpha,\beta_1 \}$, whence we can apply Proposition \ref{P3.10} together with \ref{lenzingsresult} to get the contradiction that $\alpha$ is not a loop.
\end{proof}

\begin{lemma}\label{L3.12}
If $\alpha\gamma \neq 0$, then $\beta_1 v\neq \alpha\gamma \neq \gamma w$ for all rays $v,w\in \overrightarrow{A}$.
\end{lemma}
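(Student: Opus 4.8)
The plan is to argue by contradiction in both cases and, each time, to exhibit a cleaving diagram of representation-infinite type. All morphisms occurring in these diagrams will be non-long, so the long morphism $\mu=\nu\alpha^t\nu'$ does not occur in them; by \cite[Lemma 3]{B09} the diagrams remain cleaving in the proper quotient $\overrightarrow{A}/\mu$, whence $\overrightarrow{A}/\mu$ is representation-infinite, contradicting the mildness of $A$ (equivalently of $\overrightarrow{A}$).

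First suppose $\beta_1 v=\alpha\gamma\neq 0$ for some ray $v$. Comparing sources and targets, $v$ is a ray from $y=e(\beta_1)$ to $z=e(\gamma)$, and we also have $\tilde\beta\colon y\to x$ with $\beta_1\tilde\beta=\alpha^t$. Out of the three arrows $\alpha,\beta_1,\gamma$ emanating from $x$ one builds a tree-shaped diagram $D$: the rays $\alpha^{t-1}$, $\gamma$ and $\beta_1$ leave one copy of $x$, while $\tilde\beta$ and $v$ leave the copy of $y$ reached by $\beta_1$; routing $v$ to its own target yields the underlying graph $\widetilde{D}_5$, and appending one further arrow to one of the arms turns it into $\widetilde{E}_6$ if that is preferable. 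To see that $D$ is cleaving I would run through the factorizations forbidden by \cite[Lemma 3]{B09}: an equality $\alpha^{t-1}=\gamma\rho$ or $\alpha^{t-1}=\beta_1\rho$ produces a non-deep contour containing the loop $\alpha$ together with an oriented cycle ($\gamma\rho$, resp.\ $\beta_1\rho$, is a cycle at $x$), hence a penny-farthing, which is excluded throughout this section; an equality $\gamma=\alpha^{s}\rho$ or $\beta_1=\alpha^{s}\rho$ with $s\geq 1$ contradicts the irreducibility of $\gamma$, resp.\ $\beta_1$; and an equality $\tilde\beta=v\rho$ or $v=\tilde\beta\rho$ reduces, using $\beta_1\tilde\beta=\alpha^t$, the cancellation law and the hypothesis $\beta_1 v=\alpha\gamma$, to one of the previous impossibilities or to the minimality of $t$.

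For the second assertion one argues symmetrically: assuming $\gamma w=\alpha\gamma\neq 0$, the ray $w$ goes from $z$ to $z$ and $\alpha^{j}\gamma=\gamma w^{j}$ for all $j\geq 1$. Here $D$ has $\alpha^{t-1}$, $\gamma$ and $\beta_1$ leaving a copy of $x$, with $w$ appended after $\gamma$ and $\tilde\beta$ appended after $\beta_1$ (again extending an arm if needed to reach $\widetilde{E}_6$); the forbidden factorizations are excluded by the same tools, the only new point being that $\gamma w=\beta_1\rho$ is impossible by the first assertion, since $\gamma w=\alpha\gamma$.

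I expect the main obstacle to be the bookkeeping of the cleaving conditions together with the degenerate case $t=2$, where $\alpha^{t-1}=\alpha$ and $\alpha\gamma$ visibly factors through $\alpha$, so that the naive tree diagram fails to be cleaving. In that case one records the relation $\alpha\gamma=\beta_1 v$ (resp.\ $\alpha\gamma=\gamma w$) explicitly and replaces the tree by a diagram containing a commutative square with extra pendant arrows, still of representation-infinite type; checking that this enlarged diagram is cleaving is the most delicate step and additionally invokes Lemmas \ref{L3.4} and \ref{L3.5}.
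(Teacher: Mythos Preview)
Your tree-shaped $\widetilde{D}_5$ diagram for part (i) is \emph{not} cleaving, and the failure is not confined to the case $t=2$. In your $D$ the central $x$-vertex has an outgoing arrow with image $\alpha^{t-1}$ and a length-two path through $\beta_1$ and then $\tilde\beta$; its image in $\overrightarrow{A}$ is $\beta_1\tilde\beta=\alpha^t=\alpha^{t-1}\cdot\alpha$, so $F(\beta_1\tilde\beta)$ factors through $F(\alpha^{t-1})$ while in $D$ the two lie in different branches. This already violates condition~(b) of the cleaving definition for every $t\geq 2$. A second obstruction is that $F(\beta_1 v)=\alpha\gamma$ would need not to factor through $F(\gamma)=\gamma$; that amounts to $\alpha\gamma\neq\gamma\rho$, which is precisely the assertion in part~(ii) and cannot be assumed. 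The same defect (the path $\beta_1\tilde\beta$ mapping to $\alpha^t$) hits your diagram for part~(ii) as well. Your diagnosis that only $t=2$ is problematic, and the vague fix via ``a commutative square with pendant arrows'', do not address this; indeed a commutative square alone is representation-finite.

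The paper avoids these pitfalls in two different ways. For (i) it uses a four-point $\widetilde{A}_3$ crown with morphisms $\gamma,\alpha^{t-1},v,\tilde\beta$ and \emph{no} composable pair in $D$, so the only cleaving checks are the four factorizations $\gamma\rho=\alpha^{t-1}$, $v\rho=\tilde\beta$, $\alpha^{t-1}\rho=\gamma$, $\tilde\beta\rho=v$, each of which reduces (after multiplying by $\alpha$ or $\beta_1$ and cancelling) to either the minimality of $t$ or the irreducibility of $\gamma$. For (ii) the argument is substantially more delicate: one separates $r=2$ from $r\geq 3$. When $r=2$ one does \emph{not} build a cleaving diagram at all but invokes Lemma~\ref{L3.11} (the only arrow into $z$ is $\gamma$), forcing $w_s=\gamma$ and hence $\alpha=\gamma w_1\cdots w_{s-1}$, absurd. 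When $r\geq 3$ the diagrams used are of type $\widetilde{E}_6$ or $\widetilde{E}_7$, built from the \emph{arrows} $\alpha,\beta_1,\gamma,\beta_r,\ldots$ (so no composite like $\beta_1\tilde\beta$ appears and the previous obstruction disappears), with several zero-relations and a further case split on the length of $w$; the cleaving checks there are not routine and include sub-cases that spawn yet another diagram. Finally, your justification that $\alpha^{t-1}=\gamma\rho$ ``produces a non-deep contour, hence a penny-farthing'' is both unnecessary and not clearly correct (the pair need not be non-interlaced, and non-deep contours are not all penny-farthings); the paper simply invokes the minimality of $t$.
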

\begin{proof}
\begin{itemize}
\item[i)] Assume that there exists a ray $v\in \overrightarrow{A}$ such that $\beta_1 v=\alpha\gamma \neq 0$. Then
\[D:=
\xymatrix@!=1pc{
\bullet \ar[d]_{\gamma} \ar[drr]^(.2){\alpha^{t-1}} & & \bullet \ar[d]^{\tilde{\beta}} \ar[dll]_(.2){v} \\
\bullet & & \bullet }
\]
is a cleaving diagram of Euclidian type $\widetilde{A}_3$ in $\overrightarrow{A}/\mu$.
\begin{itemize}
\item[$\bullet$] For $\gamma \rho=\alpha^{t-1}$ or $v\rho=\tilde{\beta}$ we have $\alpha\gamma \rho=\beta_1 v\rho=\beta_1 \tilde{\beta}=\alpha^t\neq 0$. Thus $\alpha^{t-1}=\gamma \rho$ contradicts the choice of $t$.
\item[$\bullet$] If $\alpha^{t-1}\rho=\gamma $ or $\tilde{\beta}\rho=v$, then $\alpha^t\rho=\beta_1 \tilde{\beta}\rho=\beta_1 v=\alpha\gamma \neq 0$. Then $\alpha^{t-1}\rho=\gamma $ contradicts the irreducibility of $\gamma$.
\end{itemize}
\item[ii)] Assume that there exists a ray $w=w_1\ldots w_s : z\rightsquigarrow z \in \overrightarrow{A}$ with irreducible $w_i$ such that $\gamma w=\alpha\gamma \neq 0$.
\begin{itemize}
\item[$r=2$:] Since $w_s$ is an irreducible ray ending in $z$, $w_s=\gamma $ by Lemma \ref{L3.11}. Thus we get a contradiction $\gamma w_1\ldots w_{s-1}=\alpha$.
\item[$r\geq 3$:] We look at the value of $s$. If $s=1$, then $w=w_1$ is a loop and
\[D:=\xymatrix@!=1pc{
\bullet \ar[r]^{w_1=w} & \bullet \ar@/_1pc/@{..>}[ddr]_{(1)} & \bullet \ar[l]_\gamma  \ar[d]^{\beta_1}  \ar[r]^\alpha & \bullet \ar@/^1pc/@{..>}[ddl]^{(2)} & \bullet \ar[l]_{\beta_r}\\
 &  & \bullet \ar[d]^{\beta_2} & & \\
 & & \bullet & & }\] is a cleaving diagram in $\overrightarrow{A}/\mu$.\\
  If $s\geq 2$, then
\[D:=\xymatrix@!=1pc{
\bullet \ar[r]^{w_{s-1}} \ar@/_2pc/@{..>}[rrr]_{(3)} & \bullet \ar[r]^{w_{s}} & \bullet & \bullet \ar[l]_\gamma  \ar[d]^{\beta_1}  \ar[r]^\alpha & \bullet & \bullet \ar[l]_{\beta_r} & \bullet \ar[l]_{\beta_{r-1}} \ar@/^2pc/@{..>}[lll]^{(4)}\\
 &  &  & \bullet  & & & }\] is cleaving in $\overrightarrow{A}/\mu$.\\
We still have to show that not any morphisms indicated by the dotted lines make the diagrams commute.
 \begin{itemize}
 \item[(1):] $\gamma \rho=\beta_1 \beta_2$, with $\rho=\rho_1\ldots \rho_l$. If $\rho=w_1^l=w^l$, then $\beta_1 \beta_2=\gamma \rho=\gamma w^l=\alpha\gamma w^{l-1}$ and $\beta_1 \beta_2\ldots \beta_r=\alpha^t=\alpha\gamma w^{l-1}\beta_3\ldots \beta_r\neq 0$. Therefore $\alpha^{t-1}=\gamma w^{l-1}\beta_3\ldots \beta_r$ is a contradiction. If $\rho\neq w_1^l$, then one of the irreducible rays $\rho_i\neq w_1$ starts in $z$ and
\[D:=\xymatrix@!=1pc{
\bullet & \bullet \ar[l]_{\rho_i} \ar[r]^{w_{1}} & \bullet & \bullet \ar[l]_\gamma  \ar[d]^{\beta_1}  \ar[r]^\alpha & \bullet & \bullet \ar[l]_{\beta_r} & \bullet \ar[l]_{\beta_{r-1}} \ar@/^2pc/@{..>}[lll]^{(4)}\\
 &  &  & \bullet  & & & }\] is cleaving in $\overrightarrow{A}/\mu$.\\
 \item[(2):] If $\alpha\rho=\beta_1 \beta_2$, then $\alpha\rho \beta_3\ldots \beta_r=\beta_1 \beta_2\ldots \beta_r=\alpha^t\neq 0$ and $\alpha^{t-1}=\rho \beta_3\ldots \beta_r$ contradicts the minimality of $t$.
 \item[(3):] If $\rho \gamma =w_{s-1}w_s$, then $\gamma w_1\ldots w_{s-2}\rho \gamma =\gamma w=\alpha\gamma \neq 0$ and $\alpha=\gamma w_1\ldots w_{s-2}\rho$ contradicts the irreducibility of $\alpha$.
 \item[(4):] If $\rho \alpha=\beta_{r-1}\beta_r$, then $\beta_1 \beta_2\ldots \beta_{r-2}\rho \alpha=\beta_1 \beta_2\ldots \beta_r=\alpha^t\neq 0$ and $\alpha^{t-1}=\beta_1 \beta_2\ldots \beta_{r-2}\rho$ contradicts the minimality of $t$.
 \end{itemize}
\end{itemize}
\end{itemize}
\end{proof}

\begin{lemma}\label{L3.13}
If $t\geq 3$, then $\alpha\gamma =0$.
\end{lemma}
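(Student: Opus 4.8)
The plan is to argue by contradiction: assume $\alpha\gamma\neq 0$. Since $t\geq 3$ the rays $\alpha^2$ and $\alpha^3$ are non-zero, so the loop $\alpha$ can be iterated twice. As in Lemmas \ref{L3.5} and \ref{L3.12}, the idea is to produce a cleaving diagram $D$ of representation-infinite Euclidean type which stays cleaving in $\overrightarrow{A}/\eta$ for a suitable long morphism $\eta$ of $\overrightarrow{A}$ not occurring in $D$ (using \cite[Lemma 3]{B09}); since $\overrightarrow{A}/\eta$ is then a proper, representation-infinite quotient and $A$ is mild iff $\overrightarrow{A}$ is, this contradicts the mildness of $A=\Ld(x)$.

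The first candidate is the diagram built on the $\alpha$-chain of length three with a $\gamma$-branch at each inner vertex,
\[D:=\xymatrix@!=1pc{
\bullet\ar[r]^{\alpha} & \bullet\ar[r]^{\alpha}\ar[d]_{\gamma} & \bullet\ar[r]^{\alpha}\ar[d]^{\gamma} & \bullet \\
 & \bullet & \bullet }\]
each arrow sent to the ray of the same name; as an abstract quiver this is of type $\widetilde{D}_5$. It is well defined as soon as $\alpha^2\gamma\neq 0$ (the images of its paths being $\alpha,\gamma,\alpha^2,\alpha^3,\alpha\gamma,\alpha^2\gamma$, all then non-zero), and one checks it is cleaving via \cite[Lemma 3]{B09} by excluding relations such as $\alpha=\gamma\rho$, $\alpha^2=\gamma\rho$, $\alpha^3=\gamma\rho$, $\alpha\gamma=\gamma\rho$, $\alpha^2=\alpha\gamma\,\rho$ and their left/right mirror images: these contradict the irreducibility of the arrows, the cancellation law, the minimality of $t$, the structure theorem for non-deep contours together with the absence of penny-farthings, or Lemma \ref{L3.12} (which forbids $\alpha\gamma=\gamma w$ and $\alpha\gamma=\beta_1 v$). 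The morphisms occurring in $D$ are $\alpha,\gamma,\alpha^2,\alpha^3,\alpha\gamma,\alpha^2\gamma$; one verifies that $\mu=\nu\alpha^t\nu'$ is not among them — immediate when $t\geq 4$, and true for $t=3$ unless $\alpha^3$ itself is long — so $D$ remains cleaving in $\overrightarrow{A}/\mu$ and mildness is violated.

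It therefore remains to handle the situations where this diagram fails, essentially two: (i) $\alpha^2\gamma=0$, and (ii) $t=3$ with $\alpha^3$ long. In (ii) one uses that $\alpha^3$ long forces $\alpha^4=0$ and $\langle\alpha^3\rangle\cong S_x$, so the chain can be shortened and the top $\alpha$ replaced by one of the available morphisms $\gamma$ (resp. $\beta_1$), yielding a $\widetilde{D}_5$-diagram that involves only $\alpha^2$, $\alpha^2\gamma$ (resp. $\alpha^2\beta_1$) — hence does not contain the long morphism $\mu$ — and one concludes as before; here a sub-case on whether $\alpha^2\beta_1=0$ (controlled by Lemma \ref{L3.5}) and on the arrows into $x$ other than $\alpha$ enters. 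In (i) the submodule $\langle\alpha\gamma\rangle$ degenerates and the pure $\alpha$-chain argument collapses; one then feeds the contour relation $\alpha^t=\beta_1\tilde{\beta}$, the vanishing $\alpha^{t+1}=0$, Lemma \ref{L3.4} and Lemma \ref{L3.12} into the diagram, obtaining a cleaving diagram of Euclidean type whose precise shape ($\widetilde{D}_5$, $\widetilde{E}_6$ or $\widetilde{A}_3$) depends on whether $\alpha\beta_1=0$ and on whether $r=2$ or $r\geq 3$ — exactly the dichotomy already used in the proof of Lemma \ref{L3.12} — and again avoiding a long morphism.

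I expect case (i) to be the main obstacle: there the naive $\alpha$-chain diagram is useless and one must weave the $\beta$-arrows into a diagram that stays of genuine extended Dynkin type, which reproduces the same delicate sub-case analysis and spurious-factorization bookkeeping as in Lemma \ref{L3.12}. Once the correct diagram is written down in each case, the remaining verifications are routine: repeated appeals to the cancellation law, the minimality of $t$, and Lemmas \ref{L3.4} and \ref{L3.12}.
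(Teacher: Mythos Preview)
Your overall strategy---a cleaving diagram of Euclidean type surviving in $\overrightarrow{A}/\mu$---is the right one, but your choice of diagram manufactures difficulties that the paper's proof avoids entirely. The paper uses the single $\widetilde{D}_5$-diagram
\[D:=\xymatrix@!=1pc{
\bullet & \bullet \ar[l]_\gamma \ar[r]^{\beta_1} \ar[d]^\alpha & \bullet\\
\bullet & \bullet \ar[l]_\gamma \ar[r]^\alpha & \bullet }\]
whose only composites are $\alpha\gamma$ and $\alpha^2$, each of length two. Since $t\geq 3$, the long morphism $\mu=\nu\alpha^t\nu'$ cannot coincide with any morphism in $D$ (it is non-irreducible; $\alpha^2$ is not long because $\alpha^3\neq 0$; and $\alpha\gamma=\nu\alpha^t\nu'$ is excluded by cancellation together with Lemma~\ref{L3.12}). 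The cleaving checks reduce to two lines: $\gamma\rho=\alpha\gamma$ or $\beta_1\rho=\alpha\gamma$ contradicts Lemma~\ref{L3.12}, and $\gamma\rho=\alpha^2$ or $\beta_1\rho=\alpha^2$ contradicts the minimality of $t\geq 3$. That is the whole proof.

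Your diagram, built only from $\alpha$ and $\gamma$, forces $\alpha^3$ and $\alpha^2\gamma$ into the list of composites and thereby creates your cases (i) and (ii). Your treatment of these cases is where the proposal has a genuine gap. In case~(ii) the replacement you sketch (``shorten the chain and replace the top $\alpha$ by $\gamma$ or $\beta_1$'') does not produce a $\widetilde{D}_5$-diagram as claimed---dropping one $\alpha$ and one branch leaves a Dynkin $D_5$, which is representation-finite---and any variant that ``involves $\alpha^2\gamma$'' presupposes $\alpha^2\gamma\neq 0$, throwing you back into case~(i). In case~(i) you give no diagram at all, only an allusion to the dichotomies of Lemma~\ref{L3.12}; but that lemma concerns the single ray $\alpha\gamma$, and it is not at all clear how its sub-case structure would assemble into a cleaving $\widetilde{D}_n$, $\widetilde{E}_n$ or $\widetilde{A}_n$ here. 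These are not routine verifications left to the reader; they are the substance of the argument, and as written they are missing.

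The idea you lack is simply to bring $\beta_1$ into the diagram as the third branch at the upper centre. This keeps every path of length at most two, so $\mu$ is automatically avoided for all $t\geq 3$, and the hypothesis $\alpha^2\gamma\neq 0$ is never needed. Both of your exceptional cases then evaporate.
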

\begin{proof} Assume that $\alpha\gamma \neq 0$, then
\[D:=\xymatrix@!=1pc{
\bullet & \bullet \ar[l]_\gamma  \ar[r]^{\beta_1}  \ar[d]^\alpha & \bullet\\
\bullet & \bullet \ar[l]_\gamma  \ar[r]^\alpha & \bullet }\] is a cleaving diagram of Euclidian type in $\overrightarrow{A}/\mu$. It is cleaving since:
\begin{enumerate}[i)]
\item $\gamma \rho=\alpha\gamma$ or $\beta_1 \rho=\alpha\gamma $ contradicts Lemma \ref{L3.12},
\item $\gamma \rho=\alpha^2$ or $\beta_1 \rho=\alpha^2$ contradicts the minimality of $t\geq 3$.
\end{enumerate}
\end{proof}

\begin{lemma}\label{L3.15}
\begin{enumerate}[a)]
\item[]
\item If $\mathcal{L}\nsubseteq\{\alpha^2, \alpha\beta_1 , \alpha\gamma\}$, then $\alpha\beta_1 =0$ or $\alpha\gamma =0$.
\item If $\alpha^2\beta_1 \neq 0$, then $\gamma w\neq \alpha\beta_1 $ for all $w\in \overrightarrow{A}$.
\end{enumerate}
\end{lemma}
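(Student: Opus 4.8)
The plan is to get a contradiction in each part through a cleaving diagram, following the pattern of Lemmas~\ref{L3.12} and~\ref{L3.13}; part~b) will then reduce essentially formally to part~a).

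For part~a) I would prove the contrapositive: assume $\alpha\beta_1\neq 0$ \emph{and} $\alpha\gamma\neq 0$, and deduce $\mathcal{L}\subseteq\{\alpha^2,\alpha\beta_1,\alpha\gamma\}$. If this failed, I would pick $\mu\in\mathcal{L}\setminus\{\alpha^2,\alpha\beta_1,\alpha\gamma\}$ and consider
\[D:=\xymatrix@!=1pc{
\bullet & \bullet \ar[l]_{\beta_1} \ar[r]^{\gamma} & \bullet \\
\bullet & \bullet \ar[l]_{\beta_1} \ar[r]^{\gamma} \ar[u]_{\alpha} & \bullet }\]
which is of Euclidean type $\widetilde{D}_5$. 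Condition~a) of a cleaving functor holds because $\alpha\beta_1\neq 0\neq\alpha\gamma$; for condition~b) and its dual one runs through the finitely many forbidden factorizations: a relation $\alpha\beta_1=\beta_1\rho$ is impossible by Lemma~\ref{L3.4} (it would place $\alpha\beta_1$ in $\langle\beta_1\rangle\cap\langle\alpha\beta_1\rangle=0$), a relation $\alpha\gamma=\gamma\rho$ is impossible by Lemma~\ref{L3.12}, and each of $\beta_1=\alpha\rho$, $\gamma=\alpha\rho$, $\alpha=\beta_1\rho$, $\alpha=\gamma\rho$, $\gamma=\beta_1\rho$, $e_x=\rho\alpha$, etc., contradicts the irreducibility of $\alpha,\beta_1,\gamma$ or the cancellation law. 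Since the only morphisms occurring in $D$ are $\alpha,\beta_1,\gamma,\alpha\beta_1,\alpha\gamma$, the long morphism $\mu$ does not occur in $D$, so $D$ is still cleaving in the proper quotient $\overrightarrow{A}/\mu$ by \cite[Lemma~3]{B09}; then $\overrightarrow{A}/\mu$ is representation-infinite, contradicting the mildness of $A$.

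For part~b) I would assume $\alpha^2\beta_1\neq 0$ and suppose, for a contradiction, that $\gamma w=\alpha\beta_1$ for some ray $w$. From $\alpha^2\beta_1\neq 0$ we get $\alpha\beta_1\neq 0$, and since $\alpha\beta_1=\gamma w$ is non-irreducible while $\gamma$ is irreducible, $w$ must be a non-isomorphism; multiplying $\gamma w=\alpha\beta_1$ by $\alpha$ on the left gives $\alpha\gamma w=\alpha^2\beta_1\neq 0$, in particular $\alpha\gamma\neq 0$. Now part~a) applies and yields $\mathcal{L}\subseteq\{\alpha^2,\alpha\beta_1,\alpha\gamma\}$. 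But none of these three morphisms is long: $\alpha^2\cdot\beta_1=\alpha^2\beta_1\neq 0$, $\alpha\cdot\alpha\beta_1=\alpha^2\beta_1\neq 0$, and $\alpha\gamma\cdot w=\alpha^2\beta_1\neq 0$ with $w$ a non-isomorphism. Hence $\mathcal{L}=\emptyset$, contradicting the existence of the long morphism $\mu=\nu\alpha^t\nu'$, which exists because $\alpha^t\neq 0$.

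The main obstacle I foresee lies entirely in part~a): compiling the complete list of forbidden factorizations for the diagram $D$ and checking each one. Just as in Lemmas~\ref{L3.12} and~\ref{L3.13}, every single case should collapse at once by the cancellation law, the minimality of $t$, or Lemmas~\ref{L3.4} and~\ref{L3.12}, so there is no conceptual difficulty, only bookkeeping; and once part~a) is in hand, part~b) is immediate.
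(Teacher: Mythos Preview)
Your argument for part~b) is correct and is essentially the paper's argument, stated perhaps more explicitly. The paper also derives $\alpha\gamma=0$ from~a) and then gets the contradiction $0\neq\alpha^2\beta_1=\alpha\gamma w=0$; your version, which shows directly that all three candidates $\alpha^2,\alpha\beta_1,\alpha\gamma$ fail to be long, is a clean way to phrase the same idea.

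Part~a), however, has a genuine gap. Your $\widetilde D_5$ diagram has \emph{three} arrows leaving the bottom-middle vertex, and from that vertex there are length-two paths mapping to $\alpha\beta_1$ and $\alpha\gamma$. The cleaving condition therefore requires you to exclude, for each of the arrows $\beta_1,\gamma$ at that vertex, that these length-two morphisms factor through it. In particular you must rule out
\[
\alpha\beta_1=\gamma\rho
\]
for some ray $\rho$. This is not an irreducibility or cancellation issue, and neither Lemma~\ref{L3.4} (which handles $\alpha\beta_1=\beta_1\rho$) nor Lemma~\ref{L3.12} (which handles $\alpha\gamma=\beta_1\rho$ and $\alpha\gamma=\gamma\rho$) covers it. Your ``etc.'' hides exactly this case. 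Note that the roles of $\beta_1$ and $\gamma$ are \emph{not} symmetric in this section---only $\beta_1$ sits in the contour $(\alpha^t,\beta_1\tilde\beta)$---so you cannot simply swap them in Lemma~\ref{L3.12}. Indeed, the statement $\gamma w\neq\alpha\beta_1$ is precisely what part~b) asserts, but only under the extra hypothesis $\alpha^2\beta_1\neq 0$; without it the paper never claims this (cf.\ Lemma~\ref{L3.17}\,e), which only gives a disjunction).

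The paper sidesteps the problem entirely by using the smaller $\widetilde D_4$ diagram
\[\xymatrix@!=1pc{
& \bullet \ar[d]^\alpha  & \\
\bullet & \bullet \ar[l]^\gamma  \ar[r]^{\beta_1}  \ar[d]^\alpha & \bullet \\
 & \bullet &
}\]
with a single branching vertex of valency four. Here every arrow is an irreducible ray and there are no length-two paths out of the branching vertex, so all factorization checks reduce to irreducibility and there is nothing to do. Replacing your diagram by this one fixes the proof immediately.
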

\begin{proof}
\begin{enumerate}[a)]
\item If $\alpha\beta_1 \neq 0$ and $\alpha\gamma \neq 0$, then
\[D:=\xymatrix@!=1pc{
& \bullet \ar[d]^\alpha  & \\
\bullet & \bullet \ar[l]^\gamma  \ar[r]^{\beta_1}  \ar[d]^\alpha & \bullet \\
 & \bullet &
}\] is a cleaving diagram of Euclidian type $\widetilde{D}_4$ in $\overrightarrow{A}$. It is still cleaving in $\overrightarrow{A}/\eta$ for $\eta\in \mathcal{L}\setminus \{ \alpha^2,\alpha\beta_1 ,\alpha\gamma \}\neq\emptyset$.
\item Since $\alpha^2\beta_1 \neq 0$, we have $\alpha\gamma =0$ by $a)$. But $\gamma w=\alpha\beta_1$ leads to the contradiction $0\neq \alpha^2\beta_1 =\alpha\gamma w=0$.
\end{enumerate}
\end{proof}

\begin{lemma}\label{L3.17}
If $t=2$ \textbf{or} $\mathcal{L}\nsubseteq\{\alpha^t, \alpha^2\beta_1\}$, then:
\begin{enumerate}[a)]
\item $\alpha^2\beta_1 =0=\alpha^2\gamma $, $\alpha^2\rho=0$ for all rays $\rho\notin \{e_x,\alpha,\ldots, \alpha^{t-2}\}$.
\item $\langle \beta_1 \rangle \cap \langle \alpha\gamma \rangle =0$.
\item If $\langle\gamma\rangle \cap \langle \beta_1 \rangle =0$, then $\langle \gamma \rangle \cap \langle \alpha^2\rangle=0$.
\item $\langle \gamma \rangle \cap \langle \alpha^t\rangle =0$ or $\langle \gamma \rangle \cap \langle \alpha\beta_1 \rangle =0$.
\item $\langle \gamma \rangle \cap\langle \alpha\beta_1 \rangle =0$ or $\langle \gamma \rangle \cap \langle \beta_1 \rangle =0$.
\item $\langle \alpha\beta_1 \rangle \cap \langle \alpha^2\rangle =0$ and $\langle \alpha\gamma \rangle \cap \langle \alpha^2\rangle =0$.
\end{enumerate}
\end{lemma}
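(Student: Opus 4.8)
The six assertions will all be proved by contradiction, by the method already used repeatedly in this section. Negating an assertion produces, via the linear independence of non-zero rays (Lemma~\ref{L1.9}), the cancellation law and the minimality of $t$, a full subquiver of $\sQ_A$ carrying a cleaving diagram $D$ of representation-infinite Euclidean type (typically $\widetilde{A}_3$, $\widetilde{D}_4$ or $\widetilde{D}_5$, occasionally $\widetilde{E}_6$). Every morphism occurring in such a $D$ is an arrow, a power $\alpha^k$ with $k\le t-1$ (which is never long, since $\alpha\cdot\alpha^{t-1}=\alpha^{t}\neq 0$), or one of $\alpha^{t},\alpha^{2}\beta_1$; hence the hypothesis ``$t=2$ \textbf{or} $\mathcal{L}\nsubseteq\{\alpha^{t},\alpha^{2}\beta_1\}$'' always furnishes a long morphism $\eta$ lying outside $D$, so $D$ stays cleaving in the \emph{proper} quotient $\overrightarrow{A}/\eta$ by \cite[Lemma 3]{B09}, contradicting the mildness of $A$ (equivalently of $\overrightarrow{A}$) via \cite[Theorem 13.17]{GR92}. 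At a few points a non-zero product of arrows instead forces a new non-deep contour containing the loop $\alpha$ and the cycle $\beta_1\tilde{\beta}$; such a contour would be a penny-farthing by \cite[6.4]{BGRS85}, which is excluded in this section.

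For $t\ge 3$ the statement follows almost formally from results already at hand. Lemma~\ref{L3.13} gives $\alpha\gamma=0$, and Lemma~\ref{L3.5} gives $\alpha^{2}\beta_1=0$ — one first observes that the present hypothesis forces $\mathcal{L}\nsubseteq\{\alpha^{3},\alpha^{2}\beta_1\}$, which is immediate for $t=3$ and, for $t>3$, holds because $\alpha^{3}$ cannot be long ($\alpha\cdot\alpha^{3}=\alpha^{4}\neq 0$). Part a) is then obtained mirroring the proof of Lemma~\ref{L3.8}: a ray $\rho$ with $\alpha^{2}\rho\neq 0$ is written as a product of arrows, and either $\rho=\alpha^{s}$ with $s\le t-2$ because $\alpha^{t+1}=0$, or its first non-$\alpha$ factor lies in $\{\beta_1,\gamma\}$, contradicting $\alpha^{2}\beta_1=\alpha^{2}\gamma=0$. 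Parts b)--f) then reduce to short computations: whenever $\alpha\gamma=0$ the submodule $\langle\alpha\gamma\rangle$ vanishes and the relevant intersection is trivially $0$; in the remaining cases one shows, using a), that $\alpha^{2}w$ and $\alpha\beta_1 v$ can only be powers $\alpha^{k}$ with $k\le t$, so that a non-trivial intersection of $\langle\gamma\rangle$ with $\langle\alpha^{2}\rangle$ (or with $\langle\alpha\beta_1\rangle$, $\langle\alpha^{t}\rangle$) forces $\gamma v=\alpha^{t}=\beta_1\tilde{\beta}\in\langle\beta_1\rangle$, hence $\langle\gamma\rangle\cap\langle\beta_1\rangle\neq 0$, while a non-trivial intersection of $\langle\alpha\beta_1\rangle$ with $\langle\alpha^{2}\rangle$ forces, after cancelling $\alpha$, a relation $\alpha^{t-1}=\beta_1 v$ contradicting the minimality of $t$; Lemmas~\ref{L3.4} and \ref{L3.6} take care of what is left.

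The case $t=2$ carries the real content. Now $\alpha^{t}=\alpha^{2}=\beta_1\tilde{\beta}\neq 0$, so the $\widetilde{D}_5$-diagram of Lemma~\ref{L3.5} is no longer cleaving and must be replaced: to exclude $\alpha^{2}\beta_1\neq 0$ and $\alpha^{2}\gamma\neq 0$ in a) I would bring in the third arrow and, where needed, split according to which of $\alpha\beta_1,\alpha\gamma$ vanishes (permissible by Lemma~\ref{L3.15}a)), arranging that each resulting Euclidean diagram either avoids $\alpha^{2}$ and $\alpha^{2}\beta_1$ altogether, or is cleaving already in $\overrightarrow{A}/\mu$ for the long $\mu=\nu\alpha^{2}\nu'$. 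For b)--f) with $t=2$ the pattern is: negate, extract rays $u,v,w$ by Lemma~\ref{L1.9}, and read off a cleaving diagram — an $\widetilde{A}_3$ for b), the analogue of the proof of Lemma~\ref{L3.4} with $\alpha\gamma$ replacing $\alpha\beta_1$ and Lemma~\ref{L3.12} excluding the degenerate factorisations; for the implications and disjunctions c)--f) one assumes the hypothesis (respectively the failure of \emph{both} alternatives) and, as in the proof of Lemma~\ref{L3.6}, combines two short relations to enlarge the diagram to type $\widetilde{D}_4$ or $\widetilde{D}_5$, Lemma~\ref{L3.12} and the minimality of $t$ disposing of the morphisms indicated by dotted arrows.

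The main obstacle is the bookkeeping in the case $t=2$: there are many sub-cases, governed by which of $\alpha\beta_1,\alpha\gamma,\alpha^{2}\beta_1$ vanish and by the shape of $\mathcal{L}$, and in each one must (i) exhibit the right full subquiver of $\sQ_A$, (ii) verify that the diagram drawn on it is genuinely cleaving — i.e.\ rule out each of the finitely many candidate factorisations, exactly the kind of check performed in Lemmas~\ref{L3.4}, \ref{L3.5} and \ref{L3.12}, but now with the extra arrow $\gamma$ present — and (iii) confirm that the diagram misses some long morphism, which is precisely why the lemma is stated under the hypothesis ``$t=2$ or $\mathcal{L}\nsubseteq\{\alpha^{t},\alpha^{2}\beta_1\}$''.
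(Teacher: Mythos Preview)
Your overall strategy matches the paper's, and your handling of a), c), f) is correct: a) genuinely splits on $t$ (the paper uses exactly Lemmas~\ref{L3.5} and~\ref{L3.13} for $t\ge 3$; for $t=2$ it exploits that $\alpha^2\beta_1\neq 0$ forces $\beta_r\beta_1\neq 0$ and builds a $\widetilde{D}_5$ on the arrows $\gamma,\alpha,\beta_1,\beta_r$, cleaving already in $\overrightarrow{A}/\mu$), while c) and f) are short computations from a) and the minimality of $t$, just as you say.

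There is, however, a real gap in your treatment of d) and e) for $t\ge 3$. The assertion that ``$\alpha\beta_1 v$ can only be powers $\alpha^{k}$'' is false: already $\alpha\beta_1$ itself is a non-zero ray which is not a power of $\alpha$ (if it were, cancelling $\alpha$ would make $\beta_1$ a power of $\alpha$). Hence a non-trivial intersection $\langle\gamma\rangle\cap\langle\alpha\beta_1\rangle$ does \emph{not} force $\gamma v=\alpha^{t}$, and your reduction of d) and e) to the implication ``$\langle\gamma\rangle\cap\langle\alpha^{t}\rangle\neq 0\Rightarrow\langle\gamma\rangle\cap\langle\beta_1\rangle\neq 0$'' collapses. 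In the paper the case distinction on $t$ is confined to a); parts b), d), e) are proved by cleaving diagrams that work uniformly for all $t$. For d) one assumes both intersections non-zero, getting rays with $\gamma w=\alpha^{t}v$ and $\gamma w'=\alpha\beta_1 v'$, and the $\widetilde{A}_3$-crown on $w,\,w',\,\alpha^{t-1}v,\,\beta_1 v'$ is cleaving because every candidate factorisation yields a non-deep contour containing $\alpha$ and $\beta_1$, hence a penny-farthing. For e) the crown is enlarged by the pair $(\tilde\beta,\alpha^{t-1})$, and the remaining factorisations are excluded via Lemma~\ref{L3.4} and the already-established d); so e) genuinely depends on d), not the other way round as your sketch suggests.

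A smaller point: in a) for $t=2$ you invoke Lemma~\ref{L3.15}\,a) to split on which of $\alpha\beta_1,\alpha\gamma$ vanishes, but that lemma requires $\mathcal{L}\nsubseteq\{\alpha^{2},\alpha\beta_1,\alpha\gamma\}$, which is not part of the hypothesis here. The paper avoids this split entirely by bringing in $\beta_r$; the cleaving checks then use Lemma~\ref{L3.4} together with Lemma~\ref{L3.15}\,b) (whose hypothesis $\alpha^{2}\beta_1\neq 0$ is precisely what is being contradicted) respectively Lemma~\ref{L3.12}.
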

\begin{proof}
\begin{itemize}
\item[a)] Consider the case $t=2$.
    \begin{itemize}
    \item[i)] If $\alpha^2\beta_1 \neq 0$, then $\beta_r\beta_1 \neq 0$ and
        \[\xymatrix@!=1pc{
        \bullet & \bullet \ar[d]^\alpha \ar[l]_\gamma  \ar[r]^{\beta_1}  & \bullet\\
        \bullet & \bullet \ar[l]_{\beta_1}  & \bullet \ar[l]_{\beta_r}
        }\] is a cleaving diagram of Euclidian type $\widetilde{D}_5$ in $\overrightarrow{A}/\mu$. The diagram is cleaving because:
        \begin{itemize}
        \item[$\bullet$] $\beta_1 \rho=\alpha\beta_1 \neq 0$ is a contradiction of Lemma \ref{L3.4},
        \item[$\bullet$] $\gamma \rho=\alpha\beta_1 \neq 0$ contradicts Lemma \ref{L3.15} b).
        \end{itemize}
    \item[ii)] If $\alpha^2\gamma \neq 0$, then $\beta_r\gamma\neq 0$ and
        \[\xymatrix@!=1pc{
        \bullet & \bullet \ar[d]^\alpha \ar[l]_\gamma  \ar[r]^{\beta_1}  & \bullet\\
        \bullet & \bullet \ar[l]_\gamma  & \bullet \ar[l]_{\beta_r}
        }\] is a cleaving diagram in $\overrightarrow{A}/\mu$. It is cleaving since $\beta_1 \rho=\alpha\gamma $ resp. $\gamma \rho=\alpha\gamma$ contradicts Lemma \ref{L3.12}.
    \end{itemize}
    In the case $t\geq 3$, $\alpha^2\gamma =0$ by Lemma \ref{L3.13}. If $t=3$, then $\mathcal{L}\nsubseteq\{\alpha^3, \alpha^2\beta_1\}$ by assumption. If $t>3$, then $\mu=\nu\alpha^t\nu'\in \mathcal{L}\setminus\{\alpha^3, \alpha^2\beta_1\}$. Hence $\alpha^2\beta_1=0$ by Lemma \ref{L3.5} in both cases.
\item[b)] If $v,w$ are rays in $\overrightarrow{A}$ such that $\beta_1 v=\alpha\gamma w\neq 0$, then the diagram
    \[D:=\xymatrix@!=1pc{
    \bullet \ar[d]_{\gamma w} \ar[drr]^(.2){\alpha^{t-1}} & & \bullet \ar[d]^{\tilde{\beta}} \ar[dll]_(.2){v} \\
    \bullet & & \bullet }\] is a cleaving diagram in $\overrightarrow{A}/\mu$.
    \begin{enumerate}[i)]
        \item If $\gamma w\rho=\alpha^{t-1}$ or $v\rho=\tilde{\beta}$, then $\beta_1 v\rho=\beta_1 \tilde{\beta}=\alpha^t=\alpha\gamma w\rho\neq 0$. Hence $\gamma w\rho=\alpha^{t-1}$ contradicts the minimality of $t$.
        \item If $\alpha^{t-1}\rho=\gamma w$ or $\tilde{\beta}\rho=v$, then $0\neq\beta_1 v=\beta_1 \tilde{\beta}\rho=\alpha\gamma w=\alpha^t\rho=0$ by a).
    \end{enumerate}
\item[c)]  Let $v,w$ be rays such that $\gamma v=\alpha^2w\neq 0$. By $a)$ we have $w=\alpha^k$ with $0\leq k\leq t-2$, that means $\gamma v=\alpha^{2+k}$. Since $t$ is minimal, we have $t=2+k$ and
    $0\neq \gamma v=\alpha^t=\beta_1 \tilde{\beta}\in\langle \gamma \rangle \cap \langle \beta_1 \rangle =0$.

\item[d)] Let $v,w,v',w'$ be rays in $\overrightarrow{A}$ such that $\gamma w=\alpha^tv\neq 0$ and $\gamma w'=\alpha\beta_1 v'\neq 0$. Then
        \[D:=\xymatrix@!=1pc{
        \bullet \ar[d]_{w} \ar[drr]^(.2){w'} & & \bullet \ar[d]^{\beta_1 v'} \ar[dll]_(.2){\alpha^{t-1}v} \\
        \bullet & & \bullet }\] is a cleaving diagram in $\overrightarrow{A}/\mu$.
        \begin{enumerate}[i)]
            \item If $w\rho=w'$ or $\alpha^{t-1}v\rho=\beta_1 v'$, then $\gamma w\rho=\gamma w'=\alpha^tv\rho=\alpha\beta_1 v'\neq 0$. Hence there is a non-deep contour $(\alpha^{t-1}v_1\ldots v_k\rho_1\ldots\rho_l,\beta_1 v'_1\ldots v'_s)$ in $\overrightarrow{A}$ which can only be a penny-farthing by the structure theorem for non-deep contours. But this case is excluded in the current section.
            \item If $w'\rho=w$ or $\beta_1 v'\rho=\alpha^{t-1}v$, then $\gamma w'\rho=\gamma w=\alpha\beta_1 v'\rho=\alpha^tv\neq 0$. Again, we have a non-deep contour $(\alpha^{t-1}v_1\ldots v_k, \beta_1 v'_1\ldots v'_l\rho_1\ldots\rho_s)$ which leads to a contradiction as before.
        \end{enumerate}

\item[e)] Let $v,w,v',w'$ be rays such that $\beta_1 v=\gamma w\neq 0$ and $\alpha\beta_1 v'=\gamma w'\neq 0$. Then
    \[\xymatrix@!=1pc{
    \bullet \ar[d]_{w} \ar[drr]^(.2){w'} & & \bullet \ar[drr]^(.2){\tilde{\beta}} \ar[dll]_(.2){v} & & \bullet \ar[d]^{\alpha^{t-1}} \ar[dll]_(.2){\beta_1 v'}\\
    \bullet & & \bullet & & \bullet}\] is a cleaving diagram in $\overrightarrow{A}/\mu$.
        \begin{enumerate}[i)]
            \item If $w\rho=w'$, we get the contradiction $0\neq \gamma w\rho=\gamma w'=\beta_1 v\rho=\alpha\beta_1 v'\in \langle \beta_1 \rangle \cap \langle \alpha\beta_1 \rangle =0$.
            \item If $w'\rho=w$, then $0\neq \gamma w'\rho=\gamma w=\alpha\beta_1 v'\rho=\beta_1 v\in \langle \beta_1 \rangle \cap \langle \alpha\beta_1 \rangle =0$.
            \item If $v\rho=\tilde{\beta}$, then $0\neq \beta_1 v\rho=\beta_1 \tilde{\beta}=\gamma w\rho=\alpha^t\in \langle \gamma \rangle \cap \langle \alpha^t\rangle =0$ by d).
            \item If $\tilde{\beta}\rho=v$, then $0\neq \beta_1 \tilde{\beta}\rho=\beta_1 v=\alpha^t\rho=\gamma w\in \langle \gamma \rangle \cap \langle \alpha^t\rangle =0$ by d).
            \item If $\alpha^{t-1}\rho=\beta_1 v'$, then $0\neq \alpha^t\rho=\alpha\beta_1 v'=\gamma w'\in \langle \gamma \rangle \cap \langle \alpha^t\rangle =0$ by d).
            \item The case $\beta_1 v'\rho=\alpha^{t-1}$ contradicts the minimality of $t$.
        \end{enumerate}
\item[f)] If $v,w$ are rays in $\overrightarrow{A}$ such that $\alpha\beta_1 v=\alpha^2w\neq 0$ resp. $\alpha\gamma v=\alpha^2w\neq 0$, then $w=\alpha^k$ with $0\leq k\leq t-2$ and $\beta_1 v=\alpha^{1+k}$ resp. $\gamma v=\alpha^{1+k}$. Since $t$ is minimal, we get the contradiction $t=1+k<t$.
\end{itemize}
\end{proof}

\begin{lemma}\label{L6.17}
If $\mathcal{L}\nsubseteq\{\alpha^2,\alpha\beta_1,\alpha\gamma\}$, then $\langle \gamma \rangle \cap\langle \alpha\gamma \rangle =0$.
\end{lemma}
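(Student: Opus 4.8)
If $\alpha\gamma=0$ there is nothing to prove, so I would assume $\alpha\gamma\neq 0$ from the outset. This already pins down the local picture at $x$: Lemma~\ref{L3.13} forces $t=2$ (hence $\alpha^2=\beta_1\tilde\beta\neq 0$ and $\alpha^3=0$); Lemma~\ref{L3.15}~a), applied to the hypothesis $\mathcal L\nsubseteq\{\alpha^2,\alpha\beta_1,\alpha\gamma\}$, forces $\alpha\beta_1=0$; and Lemma~\ref{L3.17}~a) (available because $t=2$) forces $\alpha^2\beta_1=\alpha^2\gamma=0$ and $\alpha^2\rho=0$ for every ray $\rho\neq e_x$. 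Moreover, since now $\mathcal L\nsubseteq\{\alpha^2,\alpha\gamma\}$, there is a long morphism outside $\{\alpha^2,\alpha\gamma\}$.

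Next I would argue by contradiction: suppose $\langle\gamma\rangle\cap\langle\alpha\gamma\rangle\neq 0$. By Lemma~\ref{L1.9} there are rays $v,w$ with $\gamma v=\alpha\gamma w\neq 0$, and I would fix such a pair with $|v|+|w|$ minimal. Then $w\neq e_z$ (otherwise $\gamma v=\alpha\gamma$ contradicts Lemma~\ref{L3.12}) and $v\neq e_z$ (otherwise $\gamma=\alpha\gamma w$ gives $\alpha\gamma=\alpha^2\gamma w=0$). If $v$ and $w$ had a common first arrow $\delta$, peeling it off gives a relation of the same shape with the arrow $\gamma$ replaced by the ray $\gamma\delta$, and the degenerate cases in which a peeled tail becomes trivial are excluded just as above (using $\alpha^2\gamma=0$ and Lemma~\ref{L3.12}); iterating, I may assume that $v$ and $w$ begin with different arrows. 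In this situation $\gamma w$ is non-irreducible and, crucially, $\gamma w\notin\alpha\overrightarrow{A}$: if $\gamma w=\alpha\rho$, then the first arrow of $\rho$ lies in $\{\alpha,\beta_1,\gamma\}$, and in each case left multiplication by $\alpha$ yields $\alpha\gamma w=0$ (from $\alpha^3=0$, $\alpha^2\beta_1=0$, $\alpha^2\gamma=0$ respectively), a contradiction.

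The heart of the proof is then to exhibit a cleaving diagram $D$ of representation-infinite Euclidean type (a tree, of type $\widetilde D_5$, or of a larger $\widetilde D_n$ or $\widetilde E_6$ after splitting off first arrows of $v,w$ when that is needed to keep $\gamma v$ out of $D$), whose edges carry the loop $\alpha$, the arrows $\gamma$ and $\beta_1$ at $x$, and the rays $v,w$, and to check that $D$ remains cleaving after killing a suitable long morphism. As in the earlier lemmas, cleavingness reduces to excluding a short list of factorisations among the morphisms occurring in $D$; I expect each case to be closed by the minimality of $t$, by Lemma~\ref{L3.4} ($\langle\beta_1\rangle\cap\langle\alpha\beta_1\rangle=0$), by Lemma~\ref{L3.12}, by $\gamma w\notin\alpha\overrightarrow{A}$, or by the structure theorem for non-deep contours, which would produce a penny-farthing—excluded in this section. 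Since the only morphisms of $D$ that could be long are $\alpha^2$ and $\gamma v$, I would use the hypothesis to pick a long $\eta'\notin\{\alpha^2,\alpha\gamma\}$ and, re-choosing $v,w$ if $\gamma v$ is itself long, arrange that $\eta'$ does not occur in $D$; then $D$ is still cleaving in the \emph{proper} quotient $\overrightarrow{A}/\eta'$ by \cite[Lemma~3]{B09}, so $\overrightarrow{A}/\eta'$ is representation-infinite, contradicting the mildness of $A$ (recall $A$ is mild iff $\overrightarrow{A}$ is). This contradiction gives $\langle\gamma\rangle\cap\langle\alpha\gamma\rangle=0$.

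The hard part is the choice of $D$. Because $\gamma$—unlike $\beta_1$—need not sit on a contour with the loop, the only link between $\gamma$ and $\alpha$ is the weak relation $\gamma v=\alpha\gamma w$, so the $\widetilde A_3$-argument of Lemma~\ref{L3.4} cannot be transcribed verbatim; worse, $\gamma v$ is simultaneously a left multiple of $\gamma$ and of $\alpha$, which obstructs the naive diagrams and makes both the shape of $D$ and the choice of the long morphism one divides by depend on a delicate case distinction. That case analysis, together with the bookkeeping needed to stay inside the ray-category $\overrightarrow{A}$, is where essentially all of the work lies.
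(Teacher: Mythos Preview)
Your preliminary reductions are correct and match the paper: from $\alpha\gamma\neq 0$ one gets $t=2$ via Lemma~\ref{L3.13}, then $\alpha\beta_1=0$ via Lemma~\ref{L3.15}~a), and $\alpha^2\rho=0$ for $\rho\neq e_x$ via Lemma~\ref{L3.17}~a). The observation that $v\neq e_z$ and $w\neq e_z$ is also right.

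However, the heart of the argument---the cleaving diagram---is not actually produced in your proposal; you describe a strategy, list the tools you hope will close the factorisation cases, and then explicitly concede that ``essentially all of the work lies'' in what you have not written down. That is a genuine gap: the lemma stands or falls on exhibiting a concrete $D$ and verifying its cleaving conditions, and you have not done either. Your suggested shapes ($\widetilde D_5$, $\widetilde D_n$, $\widetilde E_6$) built from $\alpha,\beta_1,\gamma,v,w$ also omit an ingredient that turns out to be essential in the paper's argument, namely the arrow $\beta_r$ at the far end of the contour $(\alpha^2,\beta_1\cdots\beta_r)$; without it the available factorisation obstructions are too weak.

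The paper's proof is both more direct and structurally different from your plan. There is no minimality-and-peeling; one simply writes $w=w_1\cdots w_q$ and splits into $q=1$ versus $q\geq 2$. In each case the diagram is an explicit eight-vertex configuration of Euclidean type $\widetilde E_7$ carrying the arrows $\gamma,\beta_1,\alpha,\gamma,\beta_r$ together with $v_s,w_1$ (respectively $w_1$ and $w_2\cdots w_q$), with one relation on the $\beta_r\gamma$ branch (cf.\ \cite[10.7]{GR92}). The quotient is always by the fixed long morphism $\mu=\nu\alpha^2\nu'$; none of the morphisms in $D$ is long (each is either an arrow or a proper factor of the nonzero $\gamma w$), so your worries about re-choosing the long morphism are unnecessary. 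The five factorisation cases that need to be excluded are then dispatched using Lemma~\ref{L3.12}, Lemma~\ref{L3.15}, Lemma~\ref{L3.17}~a), and the fact that $\alpha\beta_1=0$.
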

\begin{proof}
In the case $t\geq 3$, the claim is trivial since $\alpha\gamma =0$ by \ref{L3.13}.\par
Consider the case $t=2$. Assume that there exist rays $v,w$ in $\overrightarrow{A}$ such that $\gamma v=\alpha\gamma w\neq 0$. First of all, we deduce that $w\neq id$ by Lemma \ref{L3.12} and $v\neq id$ since $\gamma$ is an arrow. Therefore we can write $v=v_1\ldots v_s$, $w=w_1\ldots ,w_q$ with irreducible rays $v_i, w_j\in \overrightarrow{A}$. Consider the value of $q$:
\begin{enumerate}[a)]
\item If $q=1$, then the diagram
\[\xymatrix@!=1pc{
\bullet \ar[r]^{v_s} & \bullet & \bullet \ar[l]_{w_1=w} & \bullet \ar[l]_\gamma  \ar[r]^{\beta_1}  \ar[d]_\alpha & \bullet \\
 & & & \bullet \ar[d]_\gamma  & \bullet \ar[l]_{\beta_r} \ar@{--}@/_1pc/[dl]\\
 & & & \bullet &
}\] is a cleaving diagram of Euclidian type $\widetilde{E}_7$ in $\overrightarrow{A}/\mu$ (see \cite[10.7]{GR92}).
\item If $q\geq 2$, then the diagram
\[\xymatrix@!=1pc{
\bullet & \bullet \ar[l]_{w_2\ldots w_q} & \bullet \ar[l]_{w_1} & \bullet \ar[l]_\gamma  \ar[r]^{\beta_1}  \ar[d]_\alpha & \bullet \\
 & & & \bullet \ar[d]_\gamma  & \bullet \ar[l]_{\beta_r} \ar@{--}@/_1pc/[dl]\\
 & & & \bullet &
}\] is cleaving in $\overrightarrow{A}/\mu$.
\end{enumerate}
The diagrams are cleaving because:
\begin{enumerate}[i)]
\item $\alpha\rho=\gamma w\neq 0$: Then $0\neq\alpha\gamma w=\alpha^2\rho=0$ by Lemma \ref{L3.17} a).
\item $\gamma \rho=\alpha\gamma \neq 0$ contradicts Lemma \ref{L3.12}.
\item $\beta_1 \rho=\gamma w\neq 0$: Then $0\neq\alpha\gamma w=\alpha\beta_1 \rho=0$ since $\alpha\beta_1 =0$ by Lemma \ref{L3.15}.
\item $\rho v_s=\gamma w\neq 0$: Then $\alpha\rho v_s=\alpha\gamma w\neq 0$. If $\rho=\beta_1 \rho'$, then $0=\alpha\beta_1 \rho' v_s=\alpha\gamma w\neq 0$. If $\rho=\gamma \rho'$, then $\alpha\gamma \rho' v_s=\alpha\gamma w$ and $w_1=w=\rho' v_s$. Hence $\rho'=id$ and $v_s=w_1$. Therefore $0\neq\gamma v=\gamma v_1\ldots v_{s-1}w_1=\alpha\gamma w_1$ and $\gamma v_1\ldots v_{s-1}=\alpha\gamma $ contradicting Lemma \ref{L3.12}. If $\rho=\alpha\rho'$, then $0\neq\alpha\gamma w=\alpha^2\rho'v_s=0$ by Lemma \ref{L3.17} a).
\item $\beta_1 \rho=\alpha\gamma \neq 0$ contradicts Lemma \ref{L3.12}.
\end{enumerate}
\end{proof}

\begin{lemma}\label{L3.20}
Let $\mathcal{L}\nsubseteq\{\alpha^t,\alpha^2\beta_1\}$ and $\mathcal{L}\nsubseteq\{\alpha^2,\alpha\beta_1,\alpha\gamma\}$.
\begin{itemize}
\item[a)] If $\langle \alpha\gamma \rangle =0=\langle \gamma \rangle \cap \langle \alpha\beta_1 \rangle $, then $\langle \beta_1 ,\gamma ,\alpha^2\rangle \cap \langle \alpha\beta_1 \rangle =0$.
\item[b)] If $\langle \alpha\gamma \rangle =0=\langle \gamma \rangle \cap \langle \beta_1 \rangle $, then $\langle \beta_1 ,\alpha^2\rangle \cap \langle \gamma ,\alpha\beta_1 \rangle =0$.
\item[c)] If $\langle \alpha\beta_1 \rangle =0$, then $\langle \beta_1 ,\gamma ,\alpha^2\rangle \cap \langle \alpha\gamma \rangle =0$.
\end{itemize}
\end{lemma}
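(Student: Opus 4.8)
The plan is to handle all three parts by the single technique that pervades this section: take a nonzero element of the claimed intersection, use Lemma~\ref{L1.9} to reduce to the case in which every module element occurring is a single ray, and then run through the finitely many ways the rays on the two sides can be identified, discarding each possibility by invoking a lemma already available under the present hypotheses. The two standing assumptions $\mathcal{L}\nsubseteq\{\alpha^t,\alpha^2\beta_1\}$ and $\mathcal{L}\nsubseteq\{\alpha^2,\alpha\beta_1,\alpha\gamma\}$ are exactly what makes Lemmas~\ref{L3.17} and \ref{L6.17} (and \ref{L3.15}) applicable, so no new cleaving diagrams are needed; and in each part I would first dispose of the trivial case in which the outer module ($\langle\alpha\beta_1\rangle$ in a) and b), $\langle\alpha\gamma\rangle$ in c)) is already zero.

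For a): write a nonzero element of $\langle\beta_1,\gamma,\alpha^2\rangle\cap\langle\alpha\beta_1\rangle$ as $\beta_1 u+\gamma u'+\alpha^2 u''=\alpha\beta_1 w$ and use Lemma~\ref{L1.9} to take $u,u',u'',w$ to be rays; then one of $\beta_1 u=\alpha\beta_1 w$, $\gamma u'=\alpha\beta_1 w$, $\alpha^2 u''=\alpha\beta_1 w$ must hold, contradicting respectively $\langle\beta_1\rangle\cap\langle\alpha\beta_1\rangle=0$ (Lemma~\ref{L3.4}), the hypothesis $\langle\gamma\rangle\cap\langle\alpha\beta_1\rangle=0$, and $\langle\alpha\beta_1\rangle\cap\langle\alpha^2\rangle=0$ (Lemma~\ref{L3.17} f)). Part c) is word-for-word the same with $\alpha\gamma$ in place of $\alpha\beta_1$, the three matchings now contradicting $\langle\beta_1\rangle\cap\langle\alpha\gamma\rangle=0$ (Lemma~\ref{L3.17} b)), $\langle\gamma\rangle\cap\langle\alpha\gamma\rangle=0$ (Lemma~\ref{L6.17}), and $\langle\alpha\gamma\rangle\cap\langle\alpha^2\rangle=0$ (Lemma~\ref{L3.17} f)). For b) I would write a nonzero element of $\langle\beta_1,\alpha^2\rangle\cap\langle\gamma,\alpha\beta_1\rangle$ as $\beta_1 u+\alpha^2 u''=\gamma w+\alpha\beta_1 w'$, reduce to rays, and observe that by Lemma~\ref{L1.9} some nonzero term on the left must coincide with one of $\gamma w$ or $\alpha\beta_1 w'$; the four possibilities $\beta_1 u=\gamma w$, $\beta_1 u=\alpha\beta_1 w'$, $\alpha^2 u''=\gamma w$, $\alpha^2 u''=\alpha\beta_1 w'$ contradict respectively the hypothesis $\langle\gamma\rangle\cap\langle\beta_1\rangle=0$, Lemma~\ref{L3.4}, Lemma~\ref{L3.17} c) applied with that hypothesis (which yields $\langle\gamma\rangle\cap\langle\alpha^2\rangle=0$), and Lemma~\ref{L3.17} f).

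The only genuine point to be careful about is the reduction step: one must check that after passing to rays the only identifications that can occur are the cross terms listed above, i.e. that terms on a single side cannot cancel or merge in some other way. This is immediate from Lemma~\ref{L1.9}, since the nonzero rays form a basis of $A$: comparing coefficients of this basis on the two sides leaves exactly the enumerated matchings, and if none were available the element would be forced to vanish. So the main obstacle is purely organizational --- matching each of the at most four cases in each part with the correct one among the six statements of Lemma~\ref{L3.17}, Lemmas~\ref{L3.4} and \ref{L6.17}, and the part-specific hypotheses, and confirming in each that its own hypotheses hold, which the two conditions on $\mathcal{L}$ guarantee.
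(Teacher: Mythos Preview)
Your proposal is correct and follows essentially the same approach as the paper: the paper writes out only part~b) in detail (expanding both sides into sums of rays, invoking Lemma~\ref{L1.9} to obtain a bijection, and then ruling out each matching via Lemma~\ref{L3.4} and Lemma~\ref{L3.17}) and declares a) and c) analogous, while you supply the analogous arguments for all three parts with the right lemma citations (including Lemma~\ref{L6.17} for the $\langle\gamma\rangle\cap\langle\alpha\gamma\rangle$ case in~c)). Your final paragraph correctly handles the only delicate point, namely that the reduction to single-ray equalities is a basis argument rather than a literal assumption that $u,u',u'',w$ are rays; the paper is slightly more explicit here, writing out the sums and the ``without loss of generality'' distinctness assumptions before applying Lemma~\ref{L1.9}, but the content is the same.
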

\begin{proof} We only prove b); the other cases are proven analogously. Let $v,v',w,w'\in A$ be such that $\beta_1 v+\alpha^2v'=\gamma w+\alpha\beta_1 w'\neq 0$. That means we have rays $v_i , w_j\in \overrightarrow{A}$, numbers $\lambda_i,\mu_j\in\bk$ and integers $s_1,s_2\geq 0,\ n_1,n_2\geq 1$ such that
\[\sum_{i=1}^{s_1}\ld_i \beta_1 v_i + \sum_{i=s_1+1}^{n_1}\ld_i \alpha^2v_i=\sum_{j=1}^{s_2}\mu_j \gamma w_j + \sum_{j=s_2+1}^{n_2}\mu_j \alpha\beta_1 w_j\] and $\beta_1 v_i\neq \beta_1 v_j,\ \alpha^2v_i\neq \alpha^2v_j,\ \gamma w_i\neq \gamma w_j,\ \alpha\beta_1 w_i\neq \alpha\beta_1 w_j$ for $i\neq j$. Without loss of generality we can assume that all $\ld_i,\mu_j$ are non-zero, that $\beta_1 v_i\neq \alpha^2v_j$ for $i=1\ldots s_1,\ j=s_1+1\ldots n_1$ and $\gamma w_i\neq \alpha\beta_1 w_j$ for $i=1\ldots s_2,\ j=s_2+1\ldots n_2$.
Then by Lemma \ref{L1.9} we have $n_1=n_2$ and there exists a permutation $\pi$ such that $\beta_1 v_i=\gamma w_{\pi(i)}\in \langle \beta_1 \rangle \cap \langle \gamma \rangle =0$ or $\beta_1 v_i=\alpha\beta_1 w_{\pi(i)}\in \langle \beta_1 \rangle \cap \langle \alpha\beta_1 \rangle =0$ by Lemma \ref{L3.4}. Hence $s_1=0$. Moreover, by Lemma \ref{L3.17} we have $\alpha^2v_i=\gamma w_{\pi(i)}\in \langle \alpha^2\rangle \cap \langle \gamma \rangle=0$ or $\alpha^2v_i=\alpha\beta_1 w_{\pi(i)}\in \langle \alpha^2\rangle \cap \langle \alpha\beta_1 \rangle =0$; this is possible for $n_1-s_1=0$ only. Hence $n_1=0$, contradicting the choice of $n_1$.
\end{proof}

\begin{lemma}\label{L3.14}
If $\mathcal{L}\subseteq\{ \alpha^2, \alpha\beta_1 , \alpha\gamma \}$, then there exists an $\alpha$-filtration $\mathcal{F}$ of $P_x$ having finite projective dimension.
\end{lemma}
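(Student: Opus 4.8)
The plan is to begin by pinning down the shape of $P_x$. Since $\mu=\nu\alpha^t\nu'$ is long and $\mathcal L\subseteq\{\alpha^2,\alpha\beta_1,\alpha\gamma\}$, comparing lengths of these rays forces $\mu=\alpha^2$, hence $t=2$ and $\alpha^3=0$. Then Lemma~\ref{L3.17} applies: in particular $\alpha^2\beta_1=\alpha^2\gamma=0$ and $\alpha^2\rho=0$ for every ray $\rho\ne e_x$, so that $\langle\alpha^2\rangle=\bk\alpha^2\cong S_x$, while $\alpha\cdot\rad P_x=\langle\alpha^2,\alpha\beta_1,\alpha\gamma\rangle$ and $\alpha\cdot\langle\alpha^2,\alpha\beta_1,\alpha\gamma\rangle=0$. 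Consequently
\[\mathcal F:\quad P_x\supset\rad P_x\supset\langle\alpha^2,\alpha\beta_1,\alpha\gamma\rangle\supset 0\]
is an $\alpha$-filtration (the terms that vanish when $\alpha\beta_1=0$ or $\alpha\gamma=0$ are simply dropped). As $P_x/\rad P_x\cong S_x$ has finite projective dimension, $\pd_{\Ld}\rad P_x<\infty$ is automatic, so everything comes down to showing $\pd_{\Ld}\langle\alpha^2,\alpha\beta_1,\alpha\gamma\rangle<\infty$.

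If $\alpha\beta_1=\alpha\gamma=0$ there is nothing left, since then $\langle\alpha^2,\alpha\beta_1,\alpha\gamma\rangle=\langle\alpha^2\rangle\cong S_x$. In general I would argue as in Lemma~\ref{L6.6}. Put $N:=\langle\beta_1,\gamma,\alpha\beta_1,\alpha\gamma\rangle$. Because $\alpha^2=\beta_1\tilde{\beta}$ lies in $\langle\beta_1\rangle$ and $\alpha^2\rho=0$ for $\rho\ne e_x$, the quotient $M:=P_x/N$ is spanned by the images of $e_x$ and $\alpha$ alone, so $S_x$ is its only composition factor; hence $\pd_{\Ld}M<\infty$ and therefore $\pd_{\Ld}N<\infty$. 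Next I would peel the pieces $\langle\alpha\beta_1\rangle$ and $\langle\alpha\gamma\rangle$ off $N$: using Lemmas~\ref{L3.4}, \ref{L3.12}, \ref{L3.17} together with Lemma~\ref{L1.9} one checks, in the relevant sub-cases, that $\langle\beta_1,\gamma\rangle\cap\langle\alpha\beta_1,\alpha\gamma\rangle=0$ and $\langle\alpha\beta_1\rangle\cap\langle\alpha\gamma\rangle=0$, so that $N=\langle\beta_1,\gamma\rangle\oplus\langle\alpha\beta_1\rangle\oplus\langle\alpha\gamma\rangle$ and hence $\pd_{\Ld}\langle\alpha\beta_1\rangle,\pd_{\Ld}\langle\alpha\gamma\rangle<\infty$. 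Finally, by Lemma~\ref{L3.17}\,f) and Lemma~\ref{L1.9} one has $\langle\alpha^2\rangle\cap\langle\alpha\beta_1,\alpha\gamma\rangle=0$, so the exact sequences
\[0\to\langle\alpha\gamma\rangle\to\langle\alpha\beta_1,\alpha\gamma\rangle\to\langle\alpha\beta_1\rangle\to0,\qquad 0\to\langle\alpha\beta_1,\alpha\gamma\rangle\to\langle\alpha^2,\alpha\beta_1,\alpha\gamma\rangle\to S_x\to0\]
give $\pd_{\Ld}\langle\alpha^2,\alpha\beta_1,\alpha\gamma\rangle<\infty$ and finish the proof.

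The hard part will be the decomposition of $N$. The hypotheses of Lemma~\ref{L3.17}\,c)--e) are disjunctions, and when both $\alpha\beta_1\ne0$ and $\alpha\gamma\ne0$ the cancellation law turns $\langle\alpha\beta_1\rangle\cap\langle\alpha\gamma\rangle\ne0$ into $\langle\beta_1\rangle\cap\langle\gamma\rangle\ne0$, which is not excluded a priori (and Lemma~\ref{L6.17} is unavailable under the present hypothesis on $\mathcal L$). I therefore expect the proof to branch on which of $\alpha\beta_1,\alpha\gamma$ vanish and then on whether $\langle\beta_1\rangle\cap\langle\gamma\rangle=0$; in each branch one draws the graph of $P_x$ explicitly (it is small, since $t=2$ and $\alpha^3=0$) and either writes down an ad hoc $\alpha$-filtration or rules the configuration out by exhibiting a further cleaving diagram in $\overrightarrow A/\mu$ of representation-infinite Euclidean type, as in Lemmas~\ref{L3.12} and \ref{L6.17}. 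Keeping track of these branches is the only real difficulty; the homological bookkeeping in each is routine.
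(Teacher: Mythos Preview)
Your set-up is right: $\mu=\alpha^2$ forces $t=2$, hence $\alpha^3=0$, and Lemma~\ref{L3.17} gives $\langle\alpha^2\rangle\cong S_x$, $\langle\alpha\beta_1\rangle,\langle\alpha\gamma\rangle$ simple or zero, and $\langle\alpha\rangle$ with basis $\{\alpha,\alpha^2,\alpha\beta_1,\alpha\gamma\}$. Your candidate filtration $P_x\supset\rad P_x\supset\langle\alpha^2,\alpha\beta_1,\alpha\gamma\rangle\supset0$ is indeed an $\alpha$-filtration, and everything reduces to $\pd_{\Ld}\langle\alpha^2,\alpha\beta_1,\alpha\gamma\rangle<\infty$, which in turn needs $\pd_{\Ld}\langle\alpha\beta_1\rangle<\infty$ and $\pd_{\Ld}\langle\alpha\gamma\rangle<\infty$.

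The gap you already feel is real, and it is exactly the case $\langle\gamma\rangle\cap\langle\alpha\beta_1\rangle\neq0$ (note: under the present hypothesis $\mathcal L\subseteq\{\alpha^2,\alpha\beta_1,\alpha\gamma\}$ both $\alpha\beta_1$ and $\alpha\gamma$ may be nonzero, so Lemma~\ref{L3.15}\,a) does not help). In that case $\alpha\beta_1\in\langle\gamma\rangle$, so your decomposition $N=\langle\beta_1,\gamma\rangle\oplus\langle\alpha\beta_1\rangle\oplus\langle\alpha\gamma\rangle$ collapses and no argument on the page produces $\pd_{\Ld}\langle\alpha\beta_1\rangle<\infty$. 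Your fallback plan (``further cleaving diagrams or ad hoc filtrations'') is the right instinct, but the proof is not yet there; and incidentally $\langle\alpha\beta_1\rangle\cap\langle\alpha\gamma\rangle=0$ always (these are distinct simple modules), so that is not the obstruction.

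The paper's proof sidesteps this cleanly. It invokes the dichotomy of Lemma~\ref{L3.17}\,d), namely $\langle\gamma\rangle\cap\langle\alpha^2\rangle=0$ \emph{or} $\langle\gamma\rangle\cap\langle\alpha\beta_1\rangle=0$, and chooses a different filtration in each branch. If $\langle\gamma\rangle\cap\langle\alpha\beta_1\rangle=0$, then together with Lemmas~\ref{L3.4} and \ref{L3.12} one gets $\langle\alpha\rangle\cap\langle\beta_1,\gamma\rangle=\langle\alpha^2\rangle$, hence $\rad P_x/\langle\alpha^2\rangle\cong\langle\alpha\rangle/\langle\alpha^2\rangle\oplus\langle\beta_1,\gamma\rangle/\langle\alpha^2\rangle$, so $\pd_{\Ld}\langle\alpha\rangle<\infty$ and the filtration is $P_x\supset\langle\alpha\rangle\supset\langle\alpha^2\rangle\supset0$. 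If instead $\langle\gamma\rangle\cap\langle\alpha^2\rangle=0$ (the bad case for you), one shows $\rad P_x/\langle\alpha^2\rangle\cong\langle\alpha,\gamma\rangle/\langle\alpha^2\rangle\oplus\langle\beta_1\rangle/\langle\alpha^2\rangle$, whence $\pd_{\Ld}\langle\alpha,\gamma\rangle<\infty$; and from $\langle\beta_1,\gamma,\alpha\gamma\rangle=\langle\beta_1,\gamma\rangle\oplus\langle\alpha\gamma\rangle$ (which \emph{does} hold by Lemma~\ref{L3.12}) one gets $\pd_{\Ld}\langle\alpha\gamma\rangle<\infty$, giving the filtration $P_x\supset\langle\alpha,\gamma\rangle\supset\langle\alpha^2,\alpha\gamma\rangle\supset0$. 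No new cleaving diagrams are needed; the whole argument runs on Lemmas~\ref{L3.4}, \ref{L3.12} and \ref{L3.17}. The point is that in the second branch one never needs $\pd_{\Ld}\langle\alpha\beta_1\rangle<\infty$ at all, because $\alpha\beta_1$ is absorbed into $\langle\alpha,\gamma\rangle$.
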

\begin{proof}
Since $\mathcal{L}\subseteq\{ \alpha^2, \alpha\beta_1 , \alpha\gamma \}$, $\mu=\alpha^2$ is long and $t=2$. Now it is easily seen that $\langle \alpha^2 \rangle =\bk \alpha^2\cong S_x$, $\langle \alpha\gamma \rangle =\bk \alpha\gamma $, $\langle \alpha\beta_1 \rangle =\bk \alpha\beta_1$ and $\langle \alpha \rangle$ has a $\bk$ basis $\{\alpha,\alpha^2,\alpha\beta_1,\alpha\gamma\}$. Using Lemma \ref{L3.4} and \ref{L3.12} we conclude $\langle \beta_1 \rangle \cap \langle \alpha\beta_1 \rangle =0$ and $\langle \gamma \rangle \cap \langle \alpha\gamma \rangle =0=\langle \beta_1 \rangle \cap \langle \alpha\gamma \rangle $.\\
By Lemma \ref{L3.17} d) $\langle \gamma \rangle \cap \langle \alpha^2\rangle =0$ or $\langle \gamma \rangle \cap \langle \alpha\beta_1 \rangle=0$.
Thus the graph of $P_x$ has one of the following shapes:
\[\xymatrix@!=1pc{
& e_x \ar[dr] \ar[dl] \ar[d] & \\
\langle \gamma  \rangle \ar@/_1pc/@{->}[dr] & \alpha \ar[d] \ar[dl] \ar[dr] & \langle \beta_1  \rangle \ar@/^1pc/@{->}[dl] \\
\alpha\gamma  & \alpha^2 & \alpha\beta_1
}\ \ \ or
\xymatrix@!=1pc{
& e_x \ar[dr] \ar[dl] \ar[d] & \\
\langle \gamma  \rangle  \ar@/^2pc/@{->}[drr] & \alpha \ar[d] \ar[dl] \ar[dr] & \langle \beta_1  \rangle \ar@/^1pc/@{->}[dl] \\
\alpha\gamma  & \alpha^2 & \alpha\beta_1
}.\]
In the first case we consider the following exact sequence:
\[0\to \langle \alpha^2  \rangle\to\langle \alpha,\beta_1,\gamma  \rangle\to \langle \alpha,\beta_1,\gamma  \rangle/\langle \alpha^2  \rangle \to 0\] Since $\langle \alpha \rangle$ has $\bk$ basis $\{\alpha,\alpha^2,\alpha\beta_1,\alpha\gamma  \rangle$ and $\mathcal{L}\subseteq\{ \alpha^2, \alpha\beta_1 , \alpha\gamma \}$ we have $\langle \alpha,\beta_1,\gamma  \rangle/\langle \alpha^2  \rangle=\langle \alpha \rangle/\langle \alpha^2  \rangle\oplus \langle \beta_1,\gamma  \rangle/\langle \alpha^2  \rangle$. Hence $\pd_{\Ld}\langle\alpha\rangle<\infty$ and
$P_x \supset \langle\alpha\rangle \supset \langle\alpha^2\rangle \supset 0$ is the wanted filtration.\par
In the second case we have $\langle \alpha,\beta_1,\gamma  \rangle/\langle \alpha^2  \rangle=\langle \alpha,\gamma \rangle/\langle \alpha^2  \rangle\oplus \langle \beta_1  \rangle/\langle \alpha^2  \rangle$. Thus $\pd_{\Ld}\langle \alpha,\gamma  \rangle<\infty$. Now we consider
\[0\to \langle \beta_1,\gamma,\alpha\gamma  \rangle\to\langle \alpha,\beta_1,\gamma  \rangle\to S_x \to 0.\]
Since $\langle \beta_1,\gamma,\alpha\gamma  \rangle =\langle \beta_1,\gamma  \rangle\oplus \langle \alpha\gamma  \rangle$, we have $\pd_{\Ld}\langle \alpha\gamma  \rangle<\infty$ and  $P_x \supset \langle \alpha,\gamma \rangle  \supset \langle \alpha^2, \alpha\gamma \rangle  \supset 0$ is a suitable filtration.
\end{proof}

\begin{lemma}\label{L3.16}
If $\mathcal{L}\subseteq\{\alpha^t,\alpha^2\beta_1\}$, then there exists an $\alpha$-filtration $\mathcal{F}$ of $P_x$ having finite projective dimension.
\end{lemma}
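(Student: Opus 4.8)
The plan is to follow the scheme of Proposition~\ref{P3.10}: use the hypothesis $\mathcal{L}\subseteq\{\alpha^t,\alpha^2\beta_1\}$ together with the technical lemmas above to determine the possible graphs of $P_x$, then propagate $\pd_{\Ld}S_x<\infty$ down to suitable submodules of $P_x$ along a chain of short exact sequences with cokernel $S_x$ (using a few direct sum decompositions, so that finiteness of the projective dimension of a sum yields it for each summand), and finally exhibit an explicit $\alpha$-filtration of $P_x$ all of whose proper nonzero terms have finite projective dimension, which is what Proposition~\ref{lenzingsresult} requires.

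First I would cut down the number of cases. If $t=2$, then Lemma~\ref{L3.17}~a) gives $\alpha^2\beta_1=0$, so $\mathcal{L}\subseteq\{\alpha^2\}\subseteq\{\alpha^2,\alpha\beta_1,\alpha\gamma\}$ and the claim is Lemma~\ref{L3.14}. So assume $t\ge3$; then $\alpha\gamma=0$ by Lemma~\ref{L3.13}, and $\alpha^t\in\mathcal{L}$ (the long morphism $\mu=\nu\alpha^t\nu'$ lies in $\{\alpha^t,\alpha^2\beta_1\}$ and cannot be $\alpha^2\beta_1$ when $t\ge3$, since by the cancellation law that would force $\alpha\nu'=\beta_1$ with $\beta_1$ an arrow). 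In particular, for $t\ge4$ one has $\alpha^t\in\mathcal{L}\setminus\{\alpha^3,\alpha^2\beta_1\}$, so Lemma~\ref{L3.5} gives $\alpha^2\beta_1=0$. Two situations remain:
\begin{itemize}
\item[(i)] $t\ge3$ and $\alpha^2\beta_1=0$; then $\alpha^k\beta_1=\alpha^k\gamma=0$ for $k\ge2$, so each $\langle\alpha^k\rangle$ with $2\le k\le t$ is uniserial with $S_x$ as its only composition factor;
\item[(ii)] $t=3$ and $\alpha^2\beta_1\neq0$; then $\alpha^4=0$, and one checks (any longer nonzero product $\alpha^3\beta_1\beta_2\cdots$ would be a long morphism outside $\{\alpha^3,\alpha^2\beta_1\}$) that $\alpha^3\beta_1=0$ and that $\alpha^2\beta_1$ is itself long, so $\langle\alpha^3\rangle=\bk\alpha^3\cong S_x$ and $\langle\alpha^2\beta_1\rangle=\bk\alpha^2\beta_1\cong S_y$.
\end{itemize}

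In situation (i) the argument of Proposition~\ref{P3.10}~ii) carries over almost verbatim. From $0\to\langle\alpha,\beta_1,\gamma\rangle\to P_x\to S_x\to0$ one gets $\pd_{\Ld}\langle\alpha,\beta_1,\gamma\rangle<\infty$; then $0\to\langle\alpha^2,\beta_1,\gamma,\alpha\beta_1\rangle\to\langle\alpha,\beta_1,\gamma\rangle\to S_x\to0$ together with the decomposition $\langle\alpha^2,\beta_1,\gamma,\alpha\beta_1\rangle=\langle\alpha^2,\beta_1,\gamma\rangle\oplus\langle\alpha\beta_1\rangle$ yields $\pd_{\Ld}\langle\alpha\beta_1\rangle<\infty$, and $P_x\supset\langle\alpha,\beta_1,\gamma\rangle\supset\langle\alpha^2\rangle\oplus\langle\alpha\beta_1\rangle\supset\langle\alpha^3\rangle\supset\cdots\supset\langle\alpha^t\rangle\supset0$ is then an $\alpha$-filtration of finite projective dimension (note $\alpha\langle\alpha\beta_1\rangle=\langle\alpha^2\beta_1\rangle=0$). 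In situation (ii) the same two exact sequences, plus the analogous decomposition $\langle\alpha^2,\beta_1,\gamma,\alpha\beta_1\rangle=\langle\alpha^2,\beta_1,\gamma\rangle\oplus\langle\alpha\beta_1\rangle$, give $\pd_{\Ld}\langle\alpha\beta_1\rangle<\infty$; splitting $\langle\alpha^3\rangle\cong S_x$ off $\langle\alpha^2\rangle$ and $\langle\alpha^2\beta_1\rangle\cong S_y$ off $\langle\alpha\beta_1\rangle$ then forces $\pd_{\Ld}S_y<\infty$, whence $\langle\alpha^2,\alpha\beta_1\rangle$ and $\langle\alpha^3,\alpha^2\beta_1\rangle$ have finite projective dimension and $P_x\supset\langle\alpha,\beta_1,\gamma\rangle\supset\langle\alpha^2,\alpha\beta_1\rangle\supset\langle\alpha^3,\alpha^2\beta_1\rangle\supset0$ is the desired filtration, since $\alpha\langle\alpha^3,\alpha^2\beta_1\rangle=\langle\alpha^4,\alpha^3\beta_1\rangle=0$.

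The work, and the main obstacle, is in justifying the direct sum decompositions $\langle\alpha^2,\beta_1,\gamma,\alpha\beta_1\rangle=\langle\alpha^2,\beta_1,\gamma\rangle\oplus\langle\alpha\beta_1\rangle$ (and, in situation (ii), $\langle\alpha^2,\alpha\beta_1\rangle=\langle\alpha^2\rangle\oplus\langle\alpha\beta_1\rangle$): via Lemma~\ref{L1.9} these reduce to the vanishing of the pairwise intersections $\langle\beta_1\rangle\cap\langle\alpha\beta_1\rangle$ (Lemma~\ref{L3.4}), $\langle\alpha^2\rangle\cap\langle\alpha\beta_1\rangle$ and $\langle\gamma\rangle\cap\langle\alpha\beta_1\rangle$, but the standard tools for the last two --- notably Lemma~\ref{L3.5} and Lemma~\ref{L3.17} --- are unavailable here because $\mathcal{L}\subseteq\{\alpha^t,\alpha^2\beta_1\}$, so they must be re-derived from scratch: partly by the cancellation law and the minimality of $t$ (which rule out the first kinds of unwanted identifications of rays), partly by fresh cleaving diagrams of Euclidean type in $\overrightarrow{A}/\mu$ --- always checking that no long morphism other than $\alpha^t$ and $\alpha^2\beta_1$ occurs in the diagram, so that its representation-infiniteness contradicts the mildness of $A$. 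One may in addition have to split situation (ii) according to whether $r=2$ or $r\ge3$, since $r$ affects the shape of $\langle\alpha\beta_1\rangle$. Once the graph of $P_x$ is known and these decompositions are in place, the rest is routine extension-sequence bookkeeping.
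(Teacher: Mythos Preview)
Your overall scheme matches the paper's: dispose of $t=2$ via Lemma~\ref{L3.14}, use $\alpha\gamma=0$ for $t\ge3$, split on whether $\alpha^2\beta_1$ vanishes, and in the nonvanishing case show $\pd_\Ld S_y<\infty$ by splitting $\langle\alpha\beta_1\rangle$ off $\langle\alpha^2,\beta_1,\gamma,\alpha\beta_1\rangle$. But your final paragraph considerably overstates the remaining work, and in doing so leaves the argument genuinely unfinished: you promise ``fresh cleaving diagrams'' without supplying them, and you suggest a further case split on $r$.

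What you are missing is the single observation that drives the paper's proof: the hypothesis $\mathcal{L}\subseteq\{\alpha^t,\alpha^2\beta_1\}$ already forces $\langle\alpha\beta_1\rangle=\bk\alpha\beta_1$ and $\langle\alpha^2\beta_1\rangle=\bk\alpha^2\beta_1$, since any nonzero $\alpha\beta_1\rho$ with $\rho$ non-trivial would extend to a long morphism outside $\{\alpha^t,\alpha^2\beta_1\}$. With this in hand no new cleaving diagrams are needed. In your situation~(i) the same reasoning gives $\alpha\beta_1=0$ outright (the only long morphism left is $\alpha^t$, and $\alpha\beta_1$ cannot extend to it), so every $\langle\alpha^k\rangle$ for $k\ge1$ is uniserial in $S_x$ and $P_x\supset\langle\alpha\rangle\supset\cdots\supset\langle\alpha^t\rangle\supset0$ works immediately. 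In situation~(ii) you overlooked that Lemma~\ref{L3.15}~b) \emph{is} available (its only hypothesis is $\alpha^2\beta_1\neq0$); together with Lemma~\ref{L3.4} and the one-dimensionality of $\langle\alpha\beta_1\rangle$ it yields $\langle\beta_1\rangle\cap\langle\alpha\beta_1\rangle=\langle\gamma\rangle\cap\langle\alpha\beta_1\rangle=\langle\alpha^2\rangle\cap\langle\alpha\beta_1\rangle=0$ at once, so $\langle\alpha\beta_1\rangle\cong S_y$ splits off, $\pd_\Ld S_y<\infty$, and since each $\langle\alpha^k\rangle$ has only $S_x,S_y$ as composition factors the \emph{same} filtration $P_x\supset\langle\alpha\rangle\supset\cdots\supset\langle\alpha^t\rangle\supset0$ again suffices. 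Your restriction of~(ii) to $t=3$ via Lemma~\ref{L3.5} is valid but unnecessary, and the case split on $r$ never arises.
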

\begin{proof} If $t=2$, then $\alpha^2\beta_1=0$ by Lemma \ref{L3.17} a). Hence $\mathcal{L}\subseteq\{\alpha^2\}$ and the filtration exists by Lemma \ref{L3.14}.\\
If $t\geq 3$, then $\alpha\gamma =0$ by Lemma \ref{L3.13}. From the assumption $\mathcal{L}\subseteq\{\alpha^t,\alpha^2\beta_1\}$ it is easily seen that $\langle\alpha\beta_1\rangle =\bk\alpha\beta_1$ and $\langle\alpha^2\beta_1\rangle =\bk\alpha^2\beta_1$.
\begin{enumerate}[i)]
\item If $\alpha^2\beta_1=0$, then $\alpha^t$ is the only long morphism in $\overrightarrow{A}$; hence $\alpha\beta_1=0$ and $\langle\alpha^k\rangle$, $k\geq 1$, is uniserial of finite projective dimension. Thus $P_x\supset \langle\alpha\rangle\supset \langle\alpha^2\rangle\supset\ldots\supset \langle\alpha^t\rangle\supset 0$ is a suitable $\alpha$-filtration.
\item If $\alpha^2\beta_1\neq 0$, then $\langle\alpha\beta_1\rangle =\bk \alpha\beta_1\cong S_y\cong\langle\alpha^2\beta_1\rangle$. By \ref{L3.4} and \ref{L3.15} b)\\ $\langle\beta_1\rangle\cap\langle\alpha\beta_1\rangle=0=\langle\gamma\rangle\cap\langle\alpha\beta_1\rangle$. Therefore the graph of $P_x$ has the following shape:
    \[\xymatrix@!=1pc{
    & e_x \ar[dr] \ar[dl] \ar[d] & \\
    \langle \gamma \rangle \ar@/_3pc/@{->}[dddr] \ar@/_1pc/@{->}[ddrr] & \alpha \ar[d] \ar[dr] & \langle \beta_1 \rangle  \ar@/^3pc/@{->}[dddl] \\
     & \alpha^2 \ar[d] \ar[dr] & \alpha\beta_1  \\
     & \alpha^3 \ar@{..>}[d] & \alpha^2\beta_1  \\
     & \alpha^t &
    }.\]
    Moreover, $\langle\alpha\beta_1\rangle \cong S_y$ is a direct summand of the module $\langle\alpha^2,\beta_1,\gamma,\alpha\beta_1\rangle$, which has finite projective dimension. Since the modules $\langle\alpha\rangle, \langle\alpha^2\rangle, \ldots, \langle\alpha^t\rangle$ have $S_x$ and $S_y$ as the only composition factors, they are of finite projective dimension. Thus $P_x \supset \langle\alpha\rangle \supset \langle\alpha^2\rangle \supset \ldots \langle\alpha^t\rangle \supset 0$ is a suitable $\alpha$-filtration.
\end{enumerate}
\end{proof}

\begin{prop}\label{P3.20}
If $x^+=\{\alpha,\beta_1 ,\gamma \}$, then there exists an $\alpha$-filtration $\mathcal{F}$ of $P_x$ having finite projective dimension.
\end{prop}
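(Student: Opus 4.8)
The plan is to dispatch the two easy ranges of $\mathcal{L}$ to the lemmas just proved and to settle the one remaining range by hand. If $\mathcal{L}\subseteq\{\alpha^2,\alpha\beta_1,\alpha\gamma\}$ the assertion is Lemma \ref{L3.14}, and if $\mathcal{L}\subseteq\{\alpha^t,\alpha^2\beta_1\}$ it is Lemma \ref{L3.16}; so I may assume $\mathcal{L}\nsubseteq\{\alpha^2,\alpha\beta_1,\alpha\gamma\}$ \emph{and} $\mathcal{L}\nsubseteq\{\alpha^t,\alpha^2\beta_1\}$. This is exactly the hypothesis under which Lemmas \ref{L3.17}, \ref{L6.17} and \ref{L3.20} are available, and all of them will be used.

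First I would thin out the configurations. By Lemma \ref{L3.15} a) either $\alpha\beta_1=0$ or $\alpha\gamma=0$ (the latter being automatic for $t\geq 3$ by Lemma \ref{L3.13}), and Lemma \ref{L3.17} a) gives $\alpha^2\beta_1=\alpha^2\gamma=0$ together with $\alpha^2\rho=0$ for every ray $\rho\notin\{e_x,\alpha,\dots,\alpha^{t-2}\}$. Hence for $2\leq k\leq t$ the module $\langle\alpha^k\rangle$ is uniserial with $S_x$ as its only composition factor, so $\pd_{\Ld}\langle\alpha^k\rangle<\infty$ by downward induction from $\langle\alpha^t\rangle\cong S_x$ using the sequences $0\to\langle\alpha^{k+1}\rangle\to\langle\alpha^k\rangle\to S_x\to 0$. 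If moreover $\alpha\beta_1=\alpha\gamma=0$, then $\langle\alpha\rangle$ has only $S_x$ as a composition factor as well, $\pd_{\Ld}\langle\alpha\rangle<\infty$, and $P_x\supset\langle\alpha\rangle\supset\dots\supset\langle\alpha^t\rangle\supset 0$ already does the job. So the content lies in the two branches in which exactly one of $\alpha\beta_1,\alpha\gamma$ is non-zero.

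In the branch $\alpha\gamma\neq 0$ (which forces $\alpha\beta_1=0$ and $t=2$), Lemma \ref{L6.17} gives $\langle\gamma\rangle\cap\langle\alpha\gamma\rangle=0$ and Lemma \ref{L3.20} c) gives $\langle\beta_1,\gamma,\alpha^2,\alpha\gamma\rangle=\langle\beta_1,\gamma,\alpha^2\rangle\oplus\langle\alpha\gamma\rangle$. In the branch $\alpha\beta_1\neq 0$ (which forces $\alpha\gamma=0$) I would split once more by Lemma \ref{L3.17} e): if $\langle\gamma\rangle\cap\langle\alpha\beta_1\rangle=0$ then Lemma \ref{L3.20} a) gives $\langle\beta_1,\gamma,\alpha^2,\alpha\beta_1\rangle=\langle\beta_1,\gamma,\alpha^2\rangle\oplus\langle\alpha\beta_1\rangle$, while if $\langle\gamma\rangle\cap\langle\beta_1\rangle=0$ then $\langle\gamma\rangle\cap\langle\alpha^2\rangle=0$ by Lemma \ref{L3.17} c) and Lemma \ref{L3.20} b) gives $\langle\beta_1,\gamma,\alpha^2,\alpha\beta_1\rangle=\langle\beta_1,\alpha^2\rangle\oplus\langle\gamma,\alpha\beta_1\rangle$. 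In each branch this pins down the Loewy shape of $P_x$. Then I would run the standard telescoping argument: from $0\to\rad P_x\to P_x\to S_x\to 0$ and $\pd_{\Ld}S_x<\infty$ one gets $\pd_{\Ld}\rad P_x<\infty$, and peeling off a further $S_x$-layer exhibits a submodule of the form $\langle\beta_1,\gamma,\alpha^2,\alpha\beta_1\rangle$ resp. $\langle\beta_1,\gamma,\alpha^2,\alpha\gamma\rangle$ of finite projective dimension; the decompositions above, together with the fact that a direct summand of a module of finite projective dimension has finite projective dimension, transport this to the summand generated by $\gamma$ (whose top $S_z$ has \emph{a priori} unknown projective dimension) and to the simple-type piece $\langle\alpha\beta_1\rangle$ resp. $\langle\alpha\gamma\rangle$, using also $\langle\alpha\beta_1\rangle\cap\langle\alpha^2\rangle=0=\langle\alpha\gamma\rangle\cap\langle\alpha^2\rangle$ from Lemma \ref{L3.17} f). Combined with $\pd_{\Ld}\langle\alpha^k\rangle<\infty$ for $k\geq 2$ this gives $\pd_{\Ld}\langle\alpha\rangle<\infty$, and the required filtration is $P_x\supset\langle\alpha\rangle\supset\langle\alpha^2\rangle\supset\dots\supset\langle\alpha^t\rangle\supset 0$, possibly refined by inserting $\langle\alpha,\gamma\rangle$ and $\langle\alpha^2,\alpha\gamma\rangle$, or by threading through suitable direct sums and a kernel of a multiplication-by-$\alpha$ map as in the proof of Proposition \ref{P3.10}; by construction every proper term has finite projective dimension.

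The anticipated obstacle is not any one step but the case tree itself: the branches are governed independently by whether $\alpha\beta_1$ or $\alpha\gamma$ vanishes, by the value of $t$, and by which of the intersections of Lemmas \ref{L3.17} and \ref{L6.17} are zero, and for each leaf one must read off the exact Loewy diagram of $P_x$ and check that the chosen $\alpha$-filtration steps only through modules that either are uniserial with composition factor $S_x$, or have $S_x,S_y$ as composition factors, or are summands $\langle\gamma\rangle$, $\langle\alpha\gamma\rangle$ that have already been split off with finite projective dimension. The homological input is elementary and uniform — direct summands of finite-pd modules have finite pd, and a module admitting a filtration with finite-pd quotients has finite pd — so the real difficulty is keeping this bookkeeping coherent across all leaves, exactly as the authors warn in the introduction.
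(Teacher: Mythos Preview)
Your plan mirrors the paper's proof closely: reduce to the case $\mathcal{L}\nsubseteq\{\alpha^2,\alpha\beta_1,\alpha\gamma\}$ and $\mathcal{L}\nsubseteq\{\alpha^t,\alpha^2\beta_1\}$ via Lemmas \ref{L3.14} and \ref{L3.16}, observe that $\langle\alpha^k\rangle$ for $k\geq 2$ is uniserial with only $S_x$ as a factor, split into $\alpha\beta_1=0$ versus $\alpha\gamma=0$ by Lemma \ref{L3.15} a), and in the latter branch split once more by Lemma \ref{L3.17} e). All of that is exactly right.

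There is one genuine gap. In the subcase $\alpha\gamma=0$, $\langle\gamma\rangle\cap\langle\alpha\beta_1\rangle\neq 0$, $\langle\gamma\rangle\cap\langle\beta_1\rangle=0$, Lemma \ref{L3.20} b) only gives you $\pd_{\Ld}\langle\gamma,\alpha\beta_1\rangle<\infty$, \emph{not} $\pd_{\Ld}\langle\alpha\beta_1\rangle<\infty$; and since $\langle\gamma\rangle\cap\langle\alpha\beta_1\rangle\neq 0$ here, you cannot split $\langle\alpha\beta_1\rangle$ off as a summand. Consequently your assertion ``this gives $\pd_{\Ld}\langle\alpha\rangle<\infty$'' is unjustified in this leaf, and the filtration $P_x\supset\langle\alpha\rangle\supset\langle\alpha^2\rangle\supset\dots$ may fail at the very first proper term. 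Your hedge of ``inserting $\langle\alpha,\gamma\rangle$ and $\langle\alpha^2,\alpha\gamma\rangle$'' is aimed at the wrong branch (here $\alpha\gamma=0$), and ``threading through suitable direct sums'' is too vague to repair it.

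The paper's fix is simple and uniform: instead of $\langle\alpha\rangle$, take the full radical $\langle\alpha,\beta_1,\gamma\rangle=\rad P_x$ as $M_1$ (finite pd from $0\to\rad P_x\to P_x\to S_x\to 0$), and then take $M_2=\langle\alpha^2\rangle\oplus X$ where $X$ is whichever piece you have split off, namely $X=\langle\alpha\gamma\rangle$, $X=\langle\alpha\beta_1\rangle$, or $X=\langle\gamma,\alpha\beta_1\rangle$ according to the leaf. In each case $\alpha\cdot\rad P_x\subset M_2$ and $\alpha M_2\subset\langle\alpha^3\rangle$ (using $\alpha^2\beta_1=\alpha^2\gamma=0$), so
\[
P_x\supset\langle\alpha,\beta_1,\gamma\rangle\supset\langle\alpha^2\rangle\oplus X\supset\langle\alpha^3\rangle\supset\cdots\supset\langle\alpha^t\rangle\supset 0
\]
is an $\alpha$-filtration of finite projective dimension in all three leaves. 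Replacing your $\langle\alpha\rangle$ by $\rad P_x$ and choosing $M_2$ this way closes the gap.
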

\begin{proof}
By lemmata \ref{L3.14} and \ref{L3.16} we can assume that $\mathcal{L}\nsubseteq\{\alpha^t,\alpha^2\beta_1\}$ and $\mathcal{L}\nsubseteq\{\alpha^2,\alpha\beta_1 ,\alpha\gamma\}$. Then $\pd_{\Ld} \langle \alpha^k\rangle <\infty$ for $2\leq k\leq t$ since $\langle \alpha^k\rangle $ has only $S_x$ as a composition factor by \ref{L3.17} a). Moreover, $\pd_{\Ld} \langle \alpha,\beta_1 ,\gamma \rangle <\infty$ since it is the left hand term of the following exact sequence: \[ 0\to \langle\alpha,\beta_1,\gamma\rangle \to P_x \to S_x\to 0. \]
By Lemma \ref{L3.15} a) only the following two cases are possible:
\begin{itemize}
\item[i)] $\alpha\beta_1 =0$: Consider the following exact sequence:
\[ 0\to \langle  \beta_1 ,\gamma ,\alpha^2,\alpha\gamma \rangle\to \langle \alpha,\beta_1 ,\gamma \rangle \to S_x\to 0 .\]
Then $\pd_{\Ld} \langle \beta_1 ,\gamma ,\alpha^2,\alpha\gamma \rangle <\infty$. By \ref{L3.20} c) we have $\langle \beta_1 ,\gamma ,\alpha^2,\alpha\gamma \rangle =\langle \beta_1 ,\gamma ,\alpha^2\rangle \oplus \langle \alpha\gamma \rangle $; hence $\pd_{\Ld} \langle \alpha\gamma \rangle <\infty$. Therefore
$P_x \supset \langle \alpha,\beta_1 ,\gamma \rangle  \supset \langle \alpha^2\rangle \oplus \langle \alpha\gamma \rangle  \supset \langle \alpha^3\rangle  \supset\ldots \langle \alpha^t\rangle \supset 0$ is a suitable $\alpha$-filtration.

\item[ii)] $\alpha\gamma =0$: Then $\pd_{\Ld} \langle \beta_1 ,\gamma ,\alpha^2,\alpha\beta_1 \rangle <\infty$ since we have the exact sequence
\[ 0\to \langle \beta_1 ,\gamma ,\alpha^2,\alpha\beta_1 \rangle \to \langle \alpha,\beta_1 ,\gamma \rangle \to S_x\to 0 .\]
If $\langle \gamma \rangle \cap \langle \alpha\beta_1 \rangle =0$, then by \ref{L3.20} a) we have $\langle  \beta_1 ,\gamma ,\alpha^2,\alpha\beta_1 \rangle =\langle  \beta_1 ,\gamma ,\alpha^2\rangle \oplus \langle  \alpha \beta_1 \rangle $; hence $\pd_{\Ld} \langle \alpha\beta_1 \rangle <\infty$. Therefore
$P_x \supset \langle \alpha,\beta_1 ,\gamma \rangle  \supset \langle \alpha^2\rangle \oplus \langle \alpha \beta_1 \rangle  \supset \langle  \alpha^3\rangle  \supset\ldots \langle  \alpha^t\rangle \supset 0$ is a suitable $\alpha$-filtration.\\
By Lemma \ref{L3.17} e) it remains to consider the case $\langle \gamma \rangle \cap \langle \beta_1 \rangle =0$: Then $\langle  \beta_1 ,\gamma ,\alpha^2,\alpha\beta_1 \rangle =\langle  \beta_1 ,\alpha^2\rangle \oplus \langle \gamma ,\alpha\beta_1 \rangle $ by \ref{L3.20} b). Thus $\pd_{\Ld} \langle \gamma ,\alpha\beta_1 \rangle <\infty$. Now
$P_x \supset \langle \alpha,\beta_1 ,\gamma \rangle  \supset \langle \alpha^2\rangle \oplus \langle \gamma ,\alpha\beta_1 \rangle  \supset \langle \alpha^3\rangle  \supset\ldots \langle \alpha^t\rangle \supset 0$ is a suitable $\alpha$-filtration.
\end{itemize}
\end{proof}

\bibliographystyle{alpha}
\bibliography{references}
\end{document}